\theoremstyle{plain}                 
\newtheorem{theorem}{Theorem}[section]     
\newtheorem{proposition}[theorem]{Proposition} 
\newtheorem{corollary}[theorem]{Corollary}     
\newtheorem{lemma}[theorem]{Lemma}        
\theoremstyle{definition}           
\newtheorem{definition}[theorem]{Definition}    
\newtheorem{example}[theorem]{Example} 
\theoremstyle{remark}       
\newtheorem{remark}[theorem]{Remark}    
\def\bea{\begin{eqnarray}}
\def\eea{\end{eqnarray}}
\def\nn{\nonumber}
\begin{document}

\title[Electrical networks, Lagrangian Grassmannians and symplectic groups]{Electrical networks, Lagrangian Grassmannians and symplectic groups}

\author[B.~Bychkov]{B.~Bychkov}
\address{B.~B.: current affiliation: Department of Mathematics, University of Haifa, Mount
Carmel, 3488838, Haifa, Israel. Faculty of Mathematics, National Research University Higher School of Economics, Usacheva 6, 119048 Moscow, Russia; and Centre of Integrable Systems, P.~G.~Demidov Yaroslavl State University, Sovetskaya 14, 150003, Yaroslavl, Russia}
\email{bbychkov@hse.ru}
\author[V.~Gorbounov]{V.~Gorbounov}
\address{V.~G.: Faculty of Mathematics, National Research University Higher School of Economics, Usacheva 6, 119048 Moscow, Russia}
\email{vgorb10@gmail.com}
\author[A.~Kazakov]{A.~Kazakov}
\address{A.~K.: Lomonosov Moscow State University, Moscow, Russia; Moscow Institute of Physics and Technology (National Research University), Russia; and Centre of Integrable Systems, P.~G.~Demidov Yaroslavl State University, Sovetskaya 14, 150003, Yaroslavl, Russia }
\email{anton.kazakov.4@mail.ru}
\author[D.~Talalaev]{D.~Talalaev}
\address{D.~T.: Lomonosov Moscow State University, Moscow, Russia; and Centre of Integrable Systems, P.~G.~Demidov Yaroslavl State University, Sovetskaya 14, 150003, Yaroslavl, Russia}
\email{dtalalaev@yandex.ru}

\begin{abstract}
 We refine the result of T. Lam \cite{L} on embedding the space $E_n$ of electrical networks on a planar graph with $n$ boundary points into the totally non-negative Grassmannian $\mathrm{Gr}_{\geq 0}(n-1,2n)$ by proving first that the image lands in $\mathrm{Gr}(n-1,V)\subset \mathrm{Gr}(n-1,2n)$ where $V\subset \mathbb{R}^{2n}$ is a certain subspace of dimension $2n-2$. The role of this reduction in the dimension of the ambient space is crucial for us. We show next that the image lands in fact inside the Lagrangian Grassmannian $\mathrm{LG}(n-1,V)\subset \mathrm{Gr}(n-1,V)$. As it is well known $\mathrm{LG}(n-1)$ can be identified with $\mathrm{Gr}(n-1,2n-2)\cap \mathbb{P} L$ where $L\subset \bigwedge^{n-1}\mathbb R^{2n-2}$ is a subspace  of dimension equal to the Catalan number $C_n$, moreover it is the space of the fundamental representation of the symplectic group $\mathrm{Sp}(2n-2)$ which corresponds to the last vertex of the Dynkin diagram. We show further that the linear relations cutting the image of $E_n$ out of $\mathrm{Gr}(n-1,2n)$  found in \cite{L} define that space $L$. This connects the combinatorial description of $E_n$ discovered in \cite{L} and representation theory of the symplectic group.
 
\end{abstract}

\maketitle

MSC2020: 14M15, 82B20, 05E10

Key words: Electrical networks, Electrical Algebra, Lagrangian Grassmanian.
\tableofcontents

\hspace{0.1in}

\section{Introduction}
The connection between the theory of electrical networks and the Lie theory was discovered in the pioneering work \cite{LP}. Later in \cite{L} an embedding $X$ of the space of electrical networks $E_n$ on a planar circular graph with $n$ boundary points to the non-negative part of the Grassmannian $\mathrm{Gr}(n-1,2n)$ was constructed using the technique developed in the work of A. Postnikov. Furthermore its image $X(E_n)$ was explicitly identified in combinatorial terms, and its closure was studied. 

Of particular importance for us there will be the operations of attaching an edge and a spike to the graph of a network. The change of $E_n$ under these operations was described in \cite{L} as an action of a certain group, which we will refer to as the Lam group $\mathcal El_n$, on the space of dimension $2n$ and therefore on $\mathrm{Gr}(n-1,2n)$. This group is closely related to the electrical group introduced in \cite{LP}.

Another description of $E_n$ was obtained in \cite{GT} using the technique from the theory of integrable models. This description provided a different embedding of $E_n$ into the same Grassmannian. 

In this paper we will construct a new parametrisation of $X(E_n)$  using the ideas from the work of Lam \cite{L} and Postnikov \cite{P}. First we will show that $X(E_n)$  naturally sits in a smaller manifold 
$\mathrm{Gr}(n-1,V)\subset \mathrm{Gr}(n-1,2n)$ where $V\subset \mathbb R^{2n}$ is a certain subspace of dimension $2n-2$. The role of this reduction in the dimension of the ambient space is crucial for us because next we show that $X(E_n)$
sits in the Lagrangian Grassmannian $\mathrm{LG}(n-1,V)\subset \mathrm{Gr}(n-1,V)$. Since the dimension of $\mathrm{LG}(n-1,V)$
is equal to the dimension of $E_n$ the former 
is a natural home for the latter. We prove that the image of the map $X$ lands in $\mathrm{LG}_{\geq 0}(n-1,V)$, but the question whether it is a bijection remains unanswered. We are planning to return to it in a separate paper.

As it is known from geometric representation theory $\mathrm{LG}(n-1)$ can be identified with $\mathrm{Gr}(n-1,2n-2)\cap \mathbb{P} L$ where $L\subset \bigwedge^{n-1}\, \mathbb R^{2n-2}$ is the space of the fundamental representation of the group $\mathrm{Sp}(2n-2)$, which corresponds to the last vertex of the Dynkin diagram.
Its dimension is equal to the Catalan number $C_n$.
We will interpret in these terms the description of $X(E_n)$ as the slice of the Grassmannian $\mathrm{Gr}(n-1,2n)$ by a projectivisation  of the subspace $H$ of dimension equal to the Catalan number $C_n$ obtained in \cite{L}. It turns out 
that $V$ is invariant under the action of the group $\mathcal El
_n$, which factors through the action of the symplectic group $\mathrm{Sp}(2n-2)$.
Furthermore $H$
is a subspace of $\bigwedge^{n-1} V$ and it  is invariant under the action of $\mathrm{Sp}(2n-2)$ as well.
Therefore its projectivisation becomes the space for the Plucker embedding of $\mathrm{LG}(n-1)$ as the orbit of the highest weight vector in the fundamental representation of the group $\mathrm{Sp}(2n-2)$ which corresponds to the last vertex of the Dynkin diagram.
It means in particular that the linear relations cutting out $X(E_n)$ from $\mathrm{Gr}(n-1,2n)$  which were found in \cite{L} define
the space of the above fundamental representation of $\mathrm{Sp}(2n-2)$ connecting therefore the combinatorics discovered in \cite{L} and representation theory of the symplectic group.
\footnote{After the work described in this paper was finished Thomas Lam informed us in a private communication that he obtained the result that $X(E_n)\subset \mathrm{LG}(n-1,V)$ when \cite{L} was already published. 
We have learned as well a similar result was obtained in a recent preprint \cite{CGS}.
However the geometric interpretation of the space $H$ and the connection to the representation theory of the symplectic group appear to be new. 
}

The description of $X(E_n)$ given in \cite{GT} also can be  naturally identified with the subset of $\mathrm{LG}(n-1)$. Finally we show that the descriptions of the $X(E_n)$ given in \cite{GT} and \cite{L} are related by an isomorphism of the ambient space $\mathbb R^{2n-2}$.

As it was pointed out in \cite{GP} there is a striking similarity of the parametrisation  of the image of the space $E_n$ in $\mathrm{Gr}(n-1,2n)$ and the parametrisation of the space of the Ising models on a planar circular graph embedded in the orthogonal Grassmannian $\mathrm{OG}(n,2n)$ constructed in \cite{GP}. Our results may provide some insight into it. 
We are planning to explore it in future publications.

\subsection{Organisation of the paper}

In Section \ref{background} we have collected the  basic facts from the theory of electrical networks and some important for us constructions due to T. Lam and A. Postnikov.

In Section \ref{sec:networks}  we introduce a new parametrisation of the space $E_n$ in the Grassmannian $\mathrm{Gr}(n-1,2n)$. 
We study the compactification of $X(E_n)$ introduced in \cite{L} in terms of our parametrisation.

In Section \ref{sec:Lagr} we prove that $X(E_n)$ naturally sits inside $\mathrm{LG}(n-1,V)$,  where $V\subset \mathbb R^{2n}$ is a subspace of co-dimension $2$.

In Section \ref{sec:representation} we obtain the main result of the paper. We show that the linear space $H$ from \cite{L} becomes the space of the projective embedding of $\mathrm{LG}(n-1, V)$, moreover $H$ is invariant under the action of the electrical group $\mathcal{E}l_n$ which factors through the action of the symplectic group $\mathrm{Sp}(2n-2)$, and, as a representation of $\mathrm{Sp}(2n-2)$, it is the fundamental representation which corresponds to the last vertex of the Dynkin diagram.

In Section \ref{Sec:nonneg} we show that the image of $X(E_n)$ lands in $\mathrm{Gr}_{\geq 0}(n-1,V)$, the non-negative part of $\mathrm{Gr}(n-1,V)$. This is a non-trivial refinement of the fact $X(E_n)\subset \mathrm{Gr}_{\geq 0}(n-1,2n)$ proved in \cite{L}. On the other hand $X(E_n)$ does not land in $\mathrm{LG}_{\geq 0}(n-1)$ studied in \cite{K} as we will explain in this section.

In Section \ref{Sec:app} we give some useful applications of our parametrisation of $X(E_n)$. Namely we will prove the well-known  formulas from \cite{CIM} for the change of the response matrix entries after adjoining a boundary spike or a boundary bridge and give a simple proof of Theorem $1.1$ from \cite{KW},  which expresses Laurent polynomial phenomenon for the special ratios of groves partition functions.

In Section \ref{Sec:vertex} following the work \cite{GT} we describe the natural connection between the boundary measurements matrices assigned to elements of $E_n$ and points in $\mathrm{LG}(n-1,V)$. We show that the embedding from \cite{GT} and the Lam embedding can be identified. And finally we present another proof of Theorem \ref{laggr} and Theorem \ref{nonneglagr} using  standard well-connected graph technique.

\section{Electircal networks}
\label{background}

\subsection{Postnikov's approach and Lam's embedding}

We start with some background from \cite{L}, \cite{LT} and \cite{P}.

\begin{definition}  \label{l1}
A planar circular graph $\Gamma$ together with a weight function $\omega:E(\Gamma)\rightarrow \mathbb R_{\geq}$ is called a {\em bipartite network} $N(\Gamma, \omega)$ if the following holds:

\begin{itemize}
    \item The boundary vertices $V_0(\Gamma)$ are labeled by positive integers clockwise;
    \item The degrees of the boundary vertices are all equal to one and the weights of the edges incident to the boundary vertices are all equal to one;
    \item All the vertices $v\in V(\Gamma)$ are coloured by either black or white colour and each edge is incident to the vertices of different colours.
\end{itemize}
\end{definition}

\begin{definition} An {\em almost perfect matching} $\Pi$ is a collection of edges of $\Gamma$ such that
\begin{itemize}
\item Each interior vertex is incident to one of the edges in $\Pi$;
\item boundary vertices may or may not be incident to the edges in $\Pi$.
\end{itemize}
The weight $\mathrm{wt}(\Pi)$ of a matching $\Pi$ is a product of the weights of all edges in the matching. 
\end{definition}
For $I\subset V_0(\Gamma)$ denote by $\Pi(I)$ the set of almost perfect matchings  such that the black vertices in $I$ are {\it incident} to the edges in $\Pi$ and the white vertices in $I$ are {\it not incident} to the edges from $\Pi$.

Define the following invariant of a bipartite network $N(\Gamma, \omega)$:

$$k(\Gamma)=\frac{1}{2}\left(n+\sum\limits_{v \in black}(\mathrm{deg}(v)-2)+\sum\limits_{v \in white}(2-\mathrm{deg}(v))\right),$$
here $n$ is a number of boundary vertices of the graph $\Gamma.$

For a bipartite network  $N(\Gamma, \omega)$ define a collection of boundary measurements as follows:
\begin{definition} \label{ld}
For each $I\subset V_0(\Gamma)$ such that $|I|=k(G) $ the {\em boundary measurement} defined by $I$ is given by the formula:
\begin{equation}
\Delta_{I}^M=\sum \limits_{\Pi\in \Pi(I)}\mathrm{wt}(\Pi).
\end{equation}
The superscript $M$ means that it is a boundary measurement of an almost perfect matching.
\end{definition}

\begin{definition}
The \textit{totally non-negative Grassmanian} (we usually call it just non-negative Grassmanian) is the locus in the Grassmanian with the non-negative Plucker coordinates.
\end{definition}
 
\begin{theorem}\cite{LT}\label{l2}
For a bipartite network $N(\Gamma, \omega)$ the collection of boundary measurements $\Delta_{I}^M$ considered as a set of Plucker coordinates defines a point in the non-negative Grassmannian $\mathrm{Gr}(k(\Gamma), n)$. In particular the boundary measurements $\Delta_{I}^M$ satisfy the Plucker relations.
\end{theorem}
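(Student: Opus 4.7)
The plan is to embed the matching setup into Postnikov's plabic graph framework and derive the result from his boundary measurement theorem. Fix an index set $I_0$ of size $k(\Gamma)$ for which $\Pi(I_0)$ is non-empty and pick a reference matching $\Pi_0 \in \Pi(I_0)$; if no such $I_0$ exists then every $\Delta_I^M$ vanishes and the statement is trivial. After rescaling edge weights by a gauge so that $\mathrm{wt}(\Pi_0) = 1$, orient $\Gamma$ by sending edges of $\Pi_0$ from their white to their black endpoint and the remaining edges the opposite way; this makes $(\Gamma,\Pi_0)$ a perfectly oriented plabic graph with sources and sinks prescribed by $I_0$ in the sense of \cite{P}.

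For any $\Pi \in \Pi(I)$, the symmetric difference $\Pi \triangle \Pi_0$ decomposes into a vertex-disjoint family of directed boundary-to-boundary paths running between the elements of $I_0 \setminus I$ and $I \setminus I_0$ together with a collection of vertex-disjoint directed cycles. This gives a weight-preserving bijection between $\Pi(I)$ and the pairs (path family, cycle family) parametrising Postnikov's boundary measurement $M_{I_0 \to I}$ in the oriented graph. Summing weights on both sides identifies the ratio $\Delta_I^M / \Delta_{I_0}^M$ with the $I$-th boundary measurement of the plabic graph, and Postnikov's theorem (see \cite{P}) together with its matching reformulation in \cite{LT} guarantees that the resulting collection of boundary measurements forms a point of $\mathrm{Gr}_{\geq 0}(k(\Gamma), n)$.

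The main technical obstacle is the sign and cycle bookkeeping in this identification: the Lindstr\"om--Gessel--Viennot determinant produces an alternating sum over source-to-sink matchings, and one must use planarity together with the bipartite colouring to rule out surviving contributions from crossings and to verify that the cycle part assembles into the expected formal series coming from $\prod (1-\mathrm{wt}(C))^{-1}$ in Postnikov's formula. Once this is carried out, non-negativity is automatic since all weights are non-negative and each surviving summand is a product of non-negative terms; the Plücker relations follow from Postnikov's theorem. Alternatively, the three-term (and hence all) Plücker relations can be verified directly by a Kuo-type graphical condensation argument, giving a sign-reversing involution on pairs of almost perfect matchings that proves each quadratic identity.
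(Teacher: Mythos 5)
The paper does not actually prove this statement---it is quoted from \cite{LT}---but the machinery it assembles immediately afterwards (Remark \ref{bij} together with Theorems \ref{lpt} and \ref{link}) is exactly your reduction: pass to the perfect orientation attached to a reference matching $\Pi_0$, identify $\Delta_I^M/\mathrm{wt}(\Pi_0)$ with a boundary flow measurement via the symmetric-difference bijection, and invoke Postnikov's theorem. So your sketch is the intended argument; the only points to fix are that your orientation convention is reversed relative to Remark \ref{bij} (edges of $\Pi_0$ must be oriented from black to white for each internal black vertex to have exactly one outgoing and each internal white vertex exactly one incoming edge), and the sign/cycle bookkeeping you defer is precisely the content of the results of \cite{P} and \cite{LT} being cited, so leaving it to those references is consistent with how the paper itself treats the statement.
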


We will need a different way to calculate boundary measurements $\Delta_{I}^M$. For this purpose we define the notion of a flow. 

\begin{definition}
Let $N(\Gamma, \omega)$ be a bipartite network  with an orientation $O$ on the set of edges of $\Gamma$ which satisfies the following:
\begin{itemize}
    \item  Each internal black vertex has exactly one outgoing edge;
    \item  Each internal white vertex has exactly one incoming edge.
\end{itemize}
Such an orientation we call {\em perfect} and the network a {\em perfect network} and denote by $N(\Gamma, \omega, O)$.
\end{definition}

\begin{remark} \label{bij}
For each bipartite graph there is a natural bijection between the set of perfect orientations and the set of almost perfect matchings. Consider an almost perfect matching $\Pi$ on a bipartite graph $\Gamma$. We construct a perfect orientation $O(\Pi)$ of $\Gamma$ as follows:
 \begin{itemize}
     \item if $e \notin \Pi,$ then we orient the edge $e$ from white to black;
     \item if $e \in \Pi,$ then we orient the edge $e$ from black to white.
 \end{itemize}  
 It is easy to see that the constructed map is a bijection.
\end{remark}

\begin{definition} \label{pf}
A subset $F$ of the set of edges $E(\Gamma)$ of a perfect network $N(\Gamma,\omega,O)$ is called a {\em flow} if for each internal vertex the number of incoming edges is equal to the number of outgoing edges.
The weight of a flow $\mathrm{wt}(F)$ is the product of the weights of its edges.
\end{definition}
In the perfect network $N(\Gamma,\omega,O)$ the set of boundary vertices is naturally split into the sources and the sinks. 
The following statement  is obvious from the definition 
\begin{proposition} \label{pf1}
Each flow is a union of paths connecting the sources and the sinks and cycles. 
\end{proposition}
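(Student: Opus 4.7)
The plan is to decompose $F$ constructively by first extracting all source-to-sink paths and then showing that what remains is a disjoint union of directed cycles. The argument is essentially the standard Eulerian decomposition for balanced directed graphs, but adapted to handle the boundary vertices of a perfect network.

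First I would set up the framework: view $F$ as a directed subgraph of $\Gamma$ with the orientation inherited from $O$. At every internal vertex $v$, the flow condition forces $\mathrm{indeg}_F(v)=\mathrm{outdeg}_F(v)$. At boundary vertices, which have degree $1$ in $\Gamma$ by Definition \ref{l1}, the vertex either contributes one edge to $F$ (in which case it is an endpoint of the $F$-subgraph) or contributes nothing. A source with its incident edge in $F$ has exactly one outgoing $F$-edge and no incoming $F$-edge; symmetrically for sinks.

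Next I would carry out the path extraction. Let $s$ be a source whose incident edge lies in $F$. Starting at $s$, repeatedly follow an outgoing $F$-edge; at each internal vertex reached via an incoming $F$-edge, the balance condition guarantees the existence of an outgoing $F$-edge to continue. Since $\Gamma$ is finite and the process cannot revisit an internal vertex (any revisit would require a strictly positive imbalance in the traversal history, contradicting the balance condition in the remaining sub-flow), the walk must terminate at a boundary vertex, which has to be a sink because only sinks have incoming edges. Removing this path from $F$ preserves the balance at every internal vertex (we removed one in-edge and one out-edge at each) and eliminates the incident edges at the two boundary endpoints. Iterating over all sources whose incident edges belong to the current residue yields a collection of disjoint source-to-sink paths $P_1,\ldots,P_r$ whose removal leaves a subgraph $F'$ with the balance property at every internal vertex and no boundary-incident edges.

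Finally, I would decompose $F'$ into cycles. Pick any edge $e\in F'$ and start traversing from its head along outgoing $F'$-edges; the balance condition at every internal vertex again guarantees continuation, and finiteness forces the walk to revisit some vertex, at which point we extract the resulting directed cycle, remove it from $F'$, and iterate. This standard Eulerian argument terminates because $F'$ is finite, and produces cycles $C_1,\ldots,C_s$ with $F'=C_1\sqcup\cdots\sqcup C_s$.

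The main obstacle, and really the only subtle point, is justifying that the path-tracing step does not get stuck or loop back: one must note that a loop in the path phase would strand the walk at an internal vertex with an unsatisfiable balance deficit, which is impossible. Everything else is bookkeeping, so the proof should be short.
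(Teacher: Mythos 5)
The paper itself gives no argument here --- it declares the statement ``obvious from the definition'' --- so your proposal is supplying the standard Eulerian-type decomposition that the authors have in mind. The overall architecture (peel off source-to-sink paths using the balance condition at internal vertices, then decompose the boundary-free residue into cycles) is correct and is the right way to make the claim precise.

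There is, however, one genuinely false sub-claim in your path-extraction step: you assert that the walk from a source ``cannot revisit an internal vertex'' because a revisit would create a balance deficit. That is not true. Take $F=\{s\to a,\ a\to b,\ b\to a,\ a\to t\}$ with $s$ a source, $t$ a sink, and $a,b$ internal: every internal vertex is balanced ($a$ has in-degree and out-degree $2$), yet the edge-simple walk $s\to a\to b\to a\to t$ revisits $a$ without any contradiction. What the balance condition actually gives you is the weaker (and sufficient) statement that an edge-simple walk never gets stuck at an internal vertex --- each arrival at an internal $v$ uses up one more in-edge than out-edges at $v$, so an unused out-edge remains --- hence the trail must terminate at a boundary vertex reached by an incoming edge, i.e.\ a sink. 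The object you extract is therefore a source-to-sink \emph{trail}, not necessarily a simple path. The repair is immediate: either decompose that trail further into a simple path plus cycles (a closed sub-walk of a trail is an edge-disjoint union of cycles), or reverse the order of your two phases and first strip all cycles out of $F$, after which the residue is acyclic and balanced at internal vertices and your tracing argument does produce simple paths. With that correction the proof is complete; the rest of your bookkeeping (balance preserved under removal, boundary vertices having degree one, sinks being the only boundary vertices with incoming edges) is accurate.
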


For a set $I\subset V_0$, denote by $F(I)$  the set of flows $F$ such that the sources which belong to the set $I$ are not incident to any edge of $F$ and the sinks which belong to the set $I$ are incident to some edge in $F$.

\begin{definition}\label{lp1}
The {\em boundary flow measurements} for a perfect network with chosen set $I\subset V_0$ of cardinality $k(\Gamma)$ is defined by the formula:
$$\Delta_{I}^F=\sum \limits_{F\in F(I)}\mathrm{wt}(F).$$
\end{definition}
We have the following 
\begin{theorem} \cite{LT}, \cite{P} \label{lpt}
For a perfect network $N(\Gamma,\omega,O)$ the collection of boundary flow measurements $\Delta_{I}^F$ considered as a set of Plucker coordinates defines a point in the non-negative Grassmannian $\mathrm{Gr}(k(\Gamma), n)_{\geq 0}$.  In particular the boundary measurements $\Delta_{I}^{F}$ satisfy the Plucker relations.
\end{theorem}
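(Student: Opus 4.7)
The plan is to reduce Theorem \ref{lpt} to the already-cited Theorem \ref{l2} via the bijection of Remark \ref{bij} between almost perfect matchings and perfect orientations. Let $\Pi_0$ denote the almost perfect matching corresponding to the reference orientation $O$, that is, the set of edges oriented black $\to$ white.

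First I would establish, for every $I \subset V_0(\Gamma)$ with $|I| = k(\Gamma)$, a bijection
$$
\Psi \colon F(I) \longrightarrow \Pi(I), \qquad F \longmapsto \Pi_0 \,\triangle\, F,
$$
where $\triangle$ is the symmetric difference of edge sets. The verification that $\Psi(F)$ is an almost perfect matching reduces to a local check at each internal vertex: Definition \ref{pf} forces $F$ to use at that vertex either $0$ edges or $2$ edges, and in the latter case exactly one of the two lies in $\Pi_0$ (namely the unique matching edge at that vertex). In both cases $\Pi_0 \triangle F$ remains incident to exactly one edge there. A parallel analysis at boundary vertices, using the description of sources and sinks of $O$ in terms of $\Pi_0$, matches the defining conditions of $F(I)$ and $\Pi(I)$ and yields surjectivity; injectivity is immediate since $F = \Pi_0 \triangle \Psi(F)$.

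Second, to match the weights I would use the elementary identity
$$
\mathrm{wt}(F)\cdot \mathrm{wt}(\Pi_0) \;=\; \mathrm{wt}(\Psi(F)) \cdot \mathrm{wt}(\Pi_0 \cap F)^2,
$$
coupled with a gauge transformation rescaling each internal vertex by a positive scalar chosen so that every edge of $\Pi_0$ acquires weight $1$, while all other edge weights remain strictly positive and the weight-$1$ condition on boundary-incident edges is preserved. A gauge transformation multiplies every $\Delta_I^M$ by one common positive monomial, hence preserves both the Plücker relations and non-negativity; in the gauged network the identity above collapses to $\mathrm{wt}(F) = \mathrm{wt}(\Psi(F))$, so that $\Delta_I^F = \Delta_I^M$ sum-by-sum under $\Psi$. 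Combined with Theorem \ref{l2} this yields the statement.

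The main obstacle is the boundary bookkeeping inside the definition of $\Psi$: the sets $F(I)$ and $\Pi(I)$ are defined by slightly different incidence conditions, and verifying that $\Psi$ respects them requires a case analysis according to the colour of the boundary vertex and whether it is a source or a sink of $O$. A secondary issue is producing a gauge that simultaneously normalises $\Pi_0$ and keeps the boundary-incident weights equal to $1$; if a direct such gauge is not available, one can always absorb the discrepancy into a single common positive scalar multiplying all Plücker coordinates, which is still harmless for the conclusions we need.
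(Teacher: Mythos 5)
The paper does not actually prove this statement --- it is quoted from \cite{LT} and \cite{P} --- but the intended route is exactly the one you take: transport flows to almost perfect matchings via $F\mapsto \Pi_0\,\triangle\,F$ and invoke Theorem \ref{l2}. Your bijection is correct (at an internal vertex the flow uses $0$ or $2$ edges, exactly one of them matched, because one side of the bipartition has a unique in- or out-edge), and your weight identity $\mathrm{wt}(F)\,\mathrm{wt}(\Pi_0)=\mathrm{wt}(\Psi(F))\,\mathrm{wt}(\Pi_0\cap F)^2$ is also correct. The problem is the step that disposes of the factor $\mathrm{wt}(\Pi_0\cap F)^2$.

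The gauge you describe --- multiplying all edges at an internal vertex $v$ by $c_v>0$ --- does multiply every $\Delta_I^M$ by the common scalar $\prod_v c_v$ (each internal vertex is covered exactly once by every almost perfect matching), but it multiplies $\mathrm{wt}(F)$ by $\prod_v c_v^{2m_v(F)}$, where $m_v(F)$ is the in-degree of $F$ at $v$; this factor depends on $F$. So after gauging you have proved that the \emph{gauged} network's flow measurements form a non-negative Grassmannian point, while the theorem is about the flow measurements of the \emph{original} network, and the two are not related by a common scalar. Your fallback (``absorb the discrepancy into a single common positive scalar'') fails for the same reason. Two repairs are available. (i) Do not gauge at all: replace $\omega(e)$ by $1/\omega(e)$ on the edges of $\Pi_0$ when passing from the flow model to the matching model; then $\mathrm{wt}_{\omega'}(F)=\mathrm{wt}_{\omega}(\Psi(F))/\mathrm{wt}_{\omega}(\Pi_0)$ exactly, with a flow-independent denominator --- this is precisely the content of Theorem \ref{link}, already stated in the paper, and together with Theorem \ref{l2} it finishes the proof in one line. (ii) If you insist on gauging, use the flow-preserving gauge (incoming edges at $v$ scaled by $c_v$, outgoing by $c_v^{-1}$), which leaves every $\Delta_I^F$ unchanged and can still normalise $\Pi_0$ to weight $1$ since matched edges are vertex-disjoint; but then you must apply Theorem \ref{l2} directly to the gauged bipartite network, since its matching measurements are \emph{not} a common scalar times the original ones. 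As written, your argument mixes the two gauges and establishes the claim only for perfect networks whose matched edges already have weight $1$.
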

We have two embeddings of the space of weighted graphs which are equipped with a bipartite and a perfect network structure into $\mathrm{Gr}(k(\Gamma), n)_{\geq 0}$. The following theorem compares them:
\begin{theorem}\cite{LT}, \cite{P} \label{link}
Let $N(\Gamma,\omega)$ and $N(\Gamma,\omega',O)$ be a bipartite and a perfect network structure on the same graph and the weight functions $\omega$ and $\omega'$ are connected as follows:
    \begin{itemize}
        \item for any $e\in E(\Gamma)$ in the network  $N(\Gamma, \omega', O)$ connecting the white and the black vertices the weight $\omega(e)$ is the same as it is in the bipartite graph $N(\Gamma, \omega)$
        \item for any $e\in E(\Gamma)$ in the network $N(\Gamma, \omega', O)$ connecting the black and the white vertices the weight $\omega(e)$ is reciprocal of the weight of the same edge in the bipartite graph $N(\Gamma, \omega)$
    \end{itemize}
Then the Plucker coordinates defined by networks $N(\Gamma,\omega)$ and $N(\Gamma,\omega',O)$ relate with each other as follows:
$$\Delta_{I}^F=\frac{\Delta_{I}^M}{\mathrm{wt}(\Pi_0)},$$
where $wt(\Pi_0)$ is the weight of the almost perfect matching corresponding to the perfect orientation $O$, see Remark \ref{bij}.


In other words, these two networks define the same point of    $\mathrm{Gr}(k(\Gamma), n)_{\geq 0}.$
\end{theorem}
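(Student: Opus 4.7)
The plan is to exhibit a weight-preserving bijection between the set $\Pi(I)$ of almost perfect matchings defining $\Delta_I^M$ and the set $F(I)$ of flows defining $\Delta_I^F$, using the reference matching $\Pi_0$ associated to $O$ by Remark \ref{bij}. The natural candidate is the symmetric difference map
\[
\varphi\colon \Pi(I) \longrightarrow F(I), \qquad \Pi \longmapsto F := \Pi\, \triangle\, \Pi_0,
\]
with inverse $F \mapsto F \triangle \Pi_0$. This is modelled on the standard way one converts between dimer configurations and flow/path configurations.

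First I would verify that $\varphi(\Pi)$ is a flow for $O$. At any internal vertex $v$, both $\Pi$ and $\Pi_0$ contribute exactly one incident edge; either they coincide (and cancel) or they differ (and both survive in $F$). In the latter case, the $\Pi_0$-edge at $v$ is the unique $O$-oriented edge running black-to-white through $v$, and the other edge at $v$ which appears in $F$ runs white-to-black. A short case analysis on the color of $v$ then yields that $v$ has exactly one incoming and one outgoing $F$-edge, so the flow condition at internal vertices holds automatically. The translation of the boundary conditions is checked case by case on the color of $i\in V_0(\Gamma)$ and on whether the unique edge at $i$ lies in $\Pi_0$: recall that under $O$ a boundary white vertex is a source iff it is not incident to $\Pi_0$, and a boundary black vertex is a source iff it is incident to $\Pi_0$. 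Combining with the definitions of $\Pi(I)$ and $F(I)$ shows that $\Pi \in \Pi(I) \iff F \in F(I)$.

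Next I would compare weights. By the hypothesis relating $\omega$ and $\omega'$, an edge $e$ has $\omega'(e)=\omega(e)$ precisely when $e$ runs white-to-black under $O$ (i.e.\ $e\notin \Pi_0$), and $\omega'(e)=\omega(e)^{-1}$ when $e\in\Pi_0$. Therefore
\[
\mathrm{wt}(F) \;=\; \prod_{e\in \Pi\setminus\Pi_0}\omega(e)\;\cdot\;\prod_{e\in \Pi_0\setminus\Pi}\omega(e)^{-1}
\;=\; \frac{\mathrm{wt}(\Pi)}{\mathrm{wt}(\Pi_0)}.
\]
Summing over $\Pi\in \Pi(I)$ and using the bijection $\varphi$ gives $\Delta_I^F = \Delta_I^M / \mathrm{wt}(\Pi_0)$, as required. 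Since the Plücker coordinates differ only by a common nonzero scalar, the two networks define the same projective point in $\mathrm{Gr}(k(\Gamma),n)_{\geq 0}$.

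The main obstacle is the boundary-vertex bookkeeping for $\varphi$: one must set up the dictionary \emph{black vs.\ white} $\times$ \emph{in $\Pi_0$ vs.\ not in $\Pi_0$} carefully so that the four combinations at a boundary vertex $i$ consistently match the two-sided conditions defining $\Pi(I)$ and $F(I)$. Once this dictionary is pinned down, both the flow property at internal vertices and the weight identity above follow by purely local computations.
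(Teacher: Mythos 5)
The paper states this result without proof, citing \cite{LT} and \cite{P}, so there is no internal argument to compare against; your proposal supplies exactly the standard proof from those references. The symmetric-difference bijection $\Pi\mapsto\Pi\,\triangle\,\Pi_0$, the boundary dictionary (white source $\Leftrightarrow$ not in $\Pi_0$, black source $\Leftrightarrow$ in $\Pi_0$), and the weight identity $\mathrm{wt}(F)=\mathrm{wt}(\Pi)/\mathrm{wt}(\Pi_0)$ are all correct and complete the argument, since Definition \ref{lp1} involves no winding-number signs.
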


Postnikov also considered a larger class of networks defined below. We will call them the {\em Postnikov networks}.
\begin{definition} \label{matrpost}
Let $(\Gamma, \omega, O)$ be an oriented graph with the orientation (not necessary perfect)  and weight function $\omega$. We will call such set a {\em Postnikov network} and denote it by $PN(\Gamma,\omega, O)$ if 
\begin{itemize}
    \item $\omega:E(\Gamma)\rightarrow \mathbb R_{\geq}$;
    \item  There are $k$ sources and $n-k$ sinks on the boundary and the boundary vertices are labeled by positive integers clockwise;
    \item  The degrees of the boundary vertices are all equal to one and the weights of the edges incident to the boundary vertices are all equal to one.
\end{itemize}
\end{definition}
The labeling of the boundary vertices induces the labeling of the set of sources $I=\{i_1,\dots,i_k\}$.
\begin{definition}
  Consider a  Postnikov network  $PN(\Gamma,\omega, O)$ and denote by $M_{i_rj}$  the sum of weights of the directed paths from $i_r$ to $j$, which is defined as follows:
  $$M_{i_rj}=\sum_{p:i_r \to j}(-1)^{wind(p)}wt(p),$$
  here $wt(p)$ is the product of all edges belonging to $p$, and $wind(p)$ is the winding index of a path $p$, which is the signed number of full turns of a path $p,$ counting counterclockwise turns as positive, see \cite{P} for more details.   
\end{definition}
\begin{definition}
\label{def:extendedmatr}
The {\em extended matrix of boundary measurements} $A$ for a Postnikov network $PN(\Gamma,\omega, O)$ is a $k \times n$ is defined as follows:
 \begin{itemize}
     \item submatrix formed by the columns labeling the sources is the identity matrix;
     \item otherwise $a_{rj}=(-1)^sM_{i_rj},$ where $s$ is the number of elements from $I$ between the source labeled $i_r$ and the sink labeled $j$. 
 \end{itemize}
\end{definition} 

 \begin{theorem} \cite{P}
For a Postnikov network $PN(\Gamma,\omega, O)$ the minors $\Delta_I(A)$ of the extended boundary measurements matrix $A$ are non-negative, hence $A$ defines a point of the non-negative Grassmannian $\mathrm{Gr}(k(\Gamma), n)_{\geq 0}$.
 \end{theorem}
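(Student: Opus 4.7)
The plan is to prove Postnikov's theorem by reducing the minors $\Delta_J(A)$ to a manifestly non-negative combinatorial sum via a sign-reversing involution on families of paths, in the spirit of the Lindstr\"om--Gessel--Viennot (LGV) lemma adapted to planar networks with winding.

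\textbf{Step 1: Combinatorial expansion of the minor.} Fix a $k$-subset $J \subset \{1,\dots,n\}$ of column indices. By the construction of the extended matrix $A$ (Definition \ref{def:extendedmatr}), the submatrix formed by the source columns $I$ is the identity. Expanding $\Delta_J(A)$ by the Leibniz formula and using the identity columns to fix the trivial part, one reduces to a sum over bijections $\pi$ from the unused sources $I\setminus J$ to the sinks $J\setminus I$. Each such term is a signed product
\[
\mathrm{sgn}(\pi)\prod_{i_r \in I\setminus J} (-1)^{s(r)} M_{i_r,\pi(i_r)},
\]
where $s(r)$ comes from the sign convention in Definition \ref{def:extendedmatr}. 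A key preliminary observation is that the product of the signs $(-1)^{s(r)}$ together with $\mathrm{sgn}(\pi)$ equals $(-1)^{\mathrm{cr}(\pi)}$, the parity of the number of ``index crossings'' of $\pi$ when the boundary points are placed on a circle; this is a standard computation using the clockwise labelling of boundary vertices.

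\textbf{Step 2: Expansion over path families and the switching involution.} Next, substitute the definition $M_{i_r,j}=\sum_{p\colon i_r\to j}(-1)^{\mathrm{wind}(p)}\mathrm{wt}(p)$. The minor becomes a weighted sum over families $P=(p_r)$ of directed paths matched by some $\pi$, each family carrying a sign $(-1)^{\mathrm{cr}(\pi)+\sum_r \mathrm{wind}(p_r)}$. The heart of the proof is to construct an involution on the set of path families that have at least one pair of self-intersecting or mutually intersecting paths: given an intersection, swap the tails of the two paths after their first crossing. This changes the matching $\pi$ by a transposition (flipping $\mathrm{sgn}(\pi)$) and alters the total winding by exactly one half-turn per pair (flipping the winding sign), so the involution is sign-reversing and weight-preserving. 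Hence all contributions from families with intersecting paths cancel.

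\textbf{Step 3: Non-crossing flows and sign cancellation.} What remains is a sum over non-crossing path families. For such a family $P$, the matching $\pi$ is forced by the planar structure: it is the unique non-crossing matching between $I\setminus J$ and $J\setminus I$. A direct planar-topology argument shows that for this canonical non-crossing $\pi$ on the disc, the global sign $(-1)^{\mathrm{cr}(\pi)+\sum_r \mathrm{wind}(p_r)}$ equals $+1$. Adding back the closed cycles arising from edges not used in the paths (which contribute further positive weights, just as in Definition \ref{pf}), one concludes
\[
\Delta_J(A)=\sum_{F \in \mathcal{F}(J)} \mathrm{wt}(F),
\]
where $\mathcal{F}(J)$ is the set of planar flows with sources $I\setminus J$ and sinks $J\setminus I$. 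Since all edge weights are non-negative, $\Delta_J(A)\geq 0$, and $A$ defines a point in $\mathrm{Gr}(k,n)_{\geq 0}$.

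The main obstacle is \textbf{Step 3}: verifying that the sign contributions $(-1)^{\mathrm{cr}(\pi)+\sum \mathrm{wind}(p_r)}$ coming from three different sources (the Leibniz sign in the minor, the sign $(-1)^{s(r)}$ in the matrix entries, and the winding signs in $M_{i_r,j}$) precisely cancel for every non-crossing planar family. This is a careful topological book-keeping argument on the disc; the compatibility between the clockwise labelling of the boundary and the counterclockwise convention for winding is exactly what makes the cancellation work, and this is why Postnikov's sign conventions in Definition \ref{def:extendedmatr} take the particular form they do.
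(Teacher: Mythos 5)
The paper does not actually prove this statement; it is imported from Postnikov \cite{P}, and your outline follows essentially the route Postnikov (and Talaska) take: Leibniz expansion of the minor, expansion of each $M_{i_r j}$ over walks, a sign-reversing tail-swapping involution, and a final sign computation for the surviving planar families. There is, however, a genuine gap in Step 3. A Postnikov network is allowed to have directed cycles (Definition \ref{matrpost} does not require the orientation to be acyclic, let alone the graph), so the sums $M_{i_r j}=\sum_{p}(-1)^{\mathrm{wind}(p)}\mathrm{wt}(p)$ run over walks that may traverse cycles arbitrarily many times; they are formal power series that must first be shown to sum to rational functions before any rearrangement of the terms is legitimate, and your proposal never addresses this. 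More importantly, the cancellation does not leave only non-crossing path families: families consisting of non-crossing paths together with collections of vertex-disjoint directed cycles survive, and the correct output of this line of argument is Talaska's formula, reproduced in the paper as Theorem \ref{tal},
\[
\Delta_J(A)=\frac{\sum_F \mathrm{wt}(F)}{\sum_C \mathrm{wt}(C)},
\]
a subtraction-free \emph{rational} expression rather than the polynomial $\sum_F\mathrm{wt}(F)$ you write. Non-negativity still follows, since a ratio of sums of products of non-negative edge weights is non-negative, but your closing identity is false whenever the orientation admits a directed cycle.

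Second, the two assertions carrying all the real content are stated rather than proved: (i) that swapping tails at a crossing changes $\mathrm{wind}(p_1)+\mathrm{wind}(p_2)$ by an odd integer while changing the crossing parity of $\pi$ by one --- this is Postnikov's key winding-parity lemma, and it is delicate because the swapped paths can acquire or lose self-intersections, so ``one half-turn per pair'' is not literally what happens; and (ii) that the swap is actually an involution --- after swapping, the ``first crossing'' of the new pair of paths need not be the crossing you swapped at, so one must fix a global ordering of intersection points and verify that the chosen crossing is stable under the operation. Both points are repairable (they are precisely what \cite{P} and Talaska's paper establish), but as written the proposal asserts the hard steps instead of proving them, and the stated conclusion of Step 3 needs the denominator $\sum_C\mathrm{wt}(C)$ restored.
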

By construction the above point depends on the choice of the orientation, however if  we are working with the bipartite graphs and perfect orientations on them this dependence is not essential. Namely we have the following: 

\begin{theorem} \cite{P} \label{chanorient}
 Let $\Gamma $ be a bipartite graph and $O_1$ and $O_2$ be  two different perfect orientations on $\Gamma.$  Consider two Postnikov networks $PN_1(\Gamma, \omega_1, O_1),$  $PN_2(\Gamma, \omega_2, O_2)$ such that
 \begin{itemize}
     \item  All vertices $v\in V(\Gamma)$ are colored in black or white and each edge connects vertices of different colours;
     \item  Both orientations  $O_1$ and $O_2$ are perfect;
     \item If the orientations of $e \in \Gamma$ are the same in both networks then $w_1(e) = w_2(e)$ and $w_1(e)w_2(e)=1$ otherwise.
 \end{itemize}
 
Then the minors of the extended boundary measurement matrices $A_1$ and $A_2$ are related as follows:
 $$\Delta_J(A_2)=\frac{\Delta_J(A_1)}{\Delta_{I_2}(A_1)},$$
 where $I_2$ is the set of indices of the sources of the second network.
 \end{theorem}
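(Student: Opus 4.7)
The plan is to reduce the statement to Theorem~\ref{link} by exhibiting the two perfect networks as the two perfect-orientation specializations of a single underlying bipartite network. First I would construct the common bipartite weight $\omega$ on $\Gamma$ via the rule $\omega(e) := \omega_1(e)$ whenever $O_1$ orients $e$ from white to black, and $\omega(e) := 1/\omega_1(e)$ otherwise. The hypotheses $\omega_1(e) = \omega_2(e)$ where $O_1, O_2$ agree on $e$ and $\omega_1(e)\omega_2(e) = 1$ where they disagree guarantee that the same rule applied to $(\omega_2, O_2)$ produces the same $\omega$, so Theorem~\ref{link} applies to both $(\Gamma,\omega_i,O_i)$ relative to the common bipartite network $N(\Gamma,\omega)$.

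Let $\Pi_1, \Pi_2$ denote the almost perfect matchings associated with $O_1, O_2$ via Remark~\ref{bij}. A routine case check at each boundary vertex, going through the four combinations of (black vs.\ white) $\times$ (edge in or out of $\Pi$), shows that the set of sources of $O_i$ coincides with the label set $I_i$ of $\Pi_i$, so in particular $\Pi_i \in \Pi(I_i)$. Combining Theorem~\ref{link} with Postnikov's formula identifying the minors $\Delta_J(A_i)$ of the extended boundary measurement matrix with the boundary flow measurements $\Delta_J^{F_i}$, we get for every admissible $J$ and $i \in \{1,2\}$:
$$\Delta_J(A_i) \;=\; \Delta_J^{F_i} \;=\; \frac{\Delta_J^M}{\mathrm{wt}_\omega(\Pi_i)},$$
where $\Delta_J^M$ is the bipartite-matching boundary measurement of $N(\Gamma,\omega)$ and, crucially, does not depend on $i$.

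To conclude, I would specialize this identity with $J = I_2$ and $i = 2$: since the $I_2$-submatrix of $A_2$ is the identity by the very definition of the extended matrix of boundary measurements, $\Delta_{I_2}(A_2) = 1$, which forces $\mathrm{wt}_\omega(\Pi_2) = \Delta_{I_2}^M$. Substituting this back into the displayed formula for $A_2$ gives $\Delta_J(A_2) = \Delta_J^M / \Delta_{I_2}^M$, while the $i = 1$ identities applied to $J$ and to $I_2$ yield the same ratio $\Delta_J(A_1)/\Delta_{I_2}(A_1) = \Delta_J^M/\Delta_{I_2}^M$ after the common factor $\mathrm{wt}_\omega(\Pi_1)$ cancels. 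Comparing the two expressions proves the claim.

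The main technical point is essentially bookkeeping: setting up the common bipartite weighting unambiguously, and verifying that the source set of $O_i$ matches the label set $I_i$ so that the normalization $\Delta_{I_2}(A_2) = 1$ translates into $\mathrm{wt}_\omega(\Pi_2) = \Delta_{I_2}^M$. Once these two identifications are in place, the proof collapses to the short algebraic manipulation above, and the appeal to Theorem~\ref{link} does all the real work.
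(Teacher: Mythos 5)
The paper does not prove this statement at all: it is quoted from Postnikov \cite{P} as a known result, so there is no in-paper argument to compare yours against. That said, your reduction is a legitimate way to derive it from the surrounding machinery: the common bipartite weight $\omega$ is well defined precisely because of the hypothesis $\omega_1(e)=\omega_2(e)$ (resp.\ $\omega_1(e)\omega_2(e)=1$) on edges where the orientations agree (resp.\ disagree), the identification of the source set of $O_i$ with the label set $I_i$ of $\Pi_i$ is a correct case check, and normalizing by $\Delta_{I_2}(A_2)=1$ does collapse everything to the stated ratio. The one imprecision is the middle display: $\Delta_J(A_i)$ is \emph{not} equal to $\Delta_J^{F_i}=\Delta_J^M/\mathrm{wt}_\omega(\Pi_i)$; by Theorem \ref{tal} (see also Corollary \ref{contal}) one has $\Delta_J(A_i)=\Delta_J^{F_i}/\sum_C\mathrm{wt}(C)$, where the denominator is the partition function of collections of cycles. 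Taken literally, your identity with $J=I_2$, $i=2$ would force $\mathrm{wt}_\omega(\Pi_2)=\Delta_{I_2}^M$, which is false whenever $\Pi(I_2)$ contains more than one matching; the correct consequence is $\mathrm{wt}_\omega(\Pi_2)\sum_C\mathrm{wt}(C)=\Delta_{I_2}^M$. Since the cycle factor enters every minor of $A_i$ as the same multiplicative constant, it cancels in the ratio $\Delta_J(A_1)/\Delta_{I_2}(A_1)$ and in the normalized $\Delta_J(A_2)$, so your final formula $\Delta_J(A_2)=\Delta_J^M/\Delta_{I_2}^M=\Delta_J(A_1)/\Delta_{I_2}(A_1)$ stands once the intermediate identity is corrected.
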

 
\subsection{Electrical networks}

In this subsection we remind the reader of some basic fact of the theory of electrical networks.
\begin{definition} 
An electrical network $e(\Gamma, \omega)$ is a planar graph embedding into a disk, which satisfies the following conditions:  
\begin{itemize}
    \item all vertices are divided into the set of interior vertices and the set of exterior or boundary vertices and all boundary vertices lie on a circle;
    \item boundary vertices are enumerated counterclockwise;
    \item every edge of $\Gamma$ is equipped with a positive weight which is the conductivity of this edge .
\end{itemize}
We will write that $e \in E_n$ if $\Gamma$ has $n$ boundary vertices.

When electrical voltage is applied to the boundary vertices it extends to the interior vertices and the electrical current will run through the edges of $\Gamma.$ These voltages and electrical currents are defined by the Kirchhoff laws  \cite{CIM}.
\end{definition}
One of the major objects in the theory is the {\it response matrix}.
\begin{theorem}\cite{CIM}
Consider an electrical network and apply to its boundary vertices voltage  $\textbf{U}.$ This voltage induces  the current $\textbf{I}$ running through  the boundary vertices. There is a  matrix $M_R(e)$ which relates  $\textbf{U}$ and $\textbf{I}$ as follows:
$$M_R(e)\textbf{U}=\textbf{I}.$$

This matrix is called the response matrix. 
\end{theorem}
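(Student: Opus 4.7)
The plan is to exhibit $M_R(e)$ explicitly as a Schur complement of the weighted graph Laplacian of $(\Gamma,\omega)$, from which linearity of $\textbf{I}$ in $\textbf{U}$ is manifest. First I would form the weighted Laplacian $L$, whose rows and columns are indexed by $V(\Gamma)=V_0\sqcup V_{\mathrm{int}}$, with $L_{uv}=-\omega(uv)$ for every edge $uv$, diagonal entries $L_{uu}=\sum_{w\sim u}\omega(uw)$, and zero otherwise. Decomposing $L$ in block form according to the splitting $V(\Gamma)=V_0\sqcup V_{\mathrm{int}}$,
$$L=\begin{pmatrix} A & B \\ B^{T} & D \end{pmatrix},$$
sets up a framework in which Kirchhoff's laws become a single linear system.

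Next I would apply Kirchhoff's current law: at every interior vertex the net current vanishes, which by Ohm's law is equivalent to the matrix equation $B^{T}\textbf{U}+D\textbf{u}=0$, where $\textbf{u}$ is the vector of interior voltages extending the prescribed boundary data $\textbf{U}$. The essential analytic point is the invertibility of $D$: since all conductances are strictly positive and each interior connected component is joined to at least one boundary vertex, $D$ is a positive-definite Dirichlet Laplacian. Hence $\textbf{u}=-D^{-1}B^{T}\textbf{U}$ is uniquely determined and depends linearly on $\textbf{U}$, giving the harmonic extension of the boundary voltages to the interior.

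Finally I would compute $\textbf{I}$ as the vector of currents flowing out of the boundary vertices. Since the full current vector at every vertex is $L$ applied to the full voltage vector, a direct substitution yields
$$\textbf{I}=A\textbf{U}+B\textbf{u}=\bigl(A-BD^{-1}B^{T}\bigr)\textbf{U},$$
so that the response matrix is the Schur complement $M_R(e)=A-BD^{-1}B^{T}$, and the theorem follows. The main obstacle is the invertibility of $D$: the argument breaks for a connected component of $\Gamma$ that contains no boundary vertex, but this case is excluded by the definition of an electrical network, where every interior connected component communicates with the boundary. Once invertibility is in place, symmetry $M_R(e)^{T}=M_R(e)$ and the standard kernel property $M_R(e)\mathbf{1}=0$ are immediate bonuses of the same formula.
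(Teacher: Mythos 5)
Your proof is correct. The paper itself offers no proof of this statement --- it is quoted from \cite{CIM} --- and your Schur-complement construction $M_R(e)=A-BD^{-1}B^{T}$ of the Dirichlet-to-Neumann map, including the positive-definiteness argument for the interior block $D$, is exactly the standard argument given in that reference, so there is nothing to add.
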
 
We will need the following important properties of the response matrix.

\begin{theorem}\cite{CIM} \label{respc}
Let $e \in E_n $ and $M_R(e)$ be its response matrix. Then the following holds:
\begin{itemize}
    \item $M_R(e)$ is a symmetric matrix;
    \item non-diagonal entries are non-negative;
    \item sum of entries of each column and each row is equal to $0.$ 
\end{itemize}
\end{theorem}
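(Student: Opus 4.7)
The natural object to introduce is the weighted graph Laplacian (Kirchhoff matrix) $K$ of $e(\Gamma,\omega)$, with $K_{vw}=-\omega_{vw}$ if $v,w$ are joined by an edge (and $0$ otherwise) and $K_{vv}=\sum_{w\sim v}\omega_{vw}$. Kirchhoff's laws then become the single linear system $K\mathbf{U}_{\mathrm{all}}=\mathbf{I}_{\mathrm{all}}$, where $\mathbf{I}_{\mathrm{all}}$ vanishes at every interior vertex. Partitioning according to boundary vs.\ interior indices,
\[
K=\begin{pmatrix}A & B\\ B^{T} & D\end{pmatrix},
\]
the interior equations $B^{T}\mathbf{U}_{\partial}+D\mathbf{U}_{\circ}=0$ can be solved for $\mathbf{U}_{\circ}$ because $D$ is invertible (the interior block of the Laplacian is strictly diagonally dominant with respect to at least one boundary neighbour in each connected component). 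Substituting back gives the Schur-complement formula
\[
M_{R}(e)=\pm\bigl(A-BD^{-1}B^{T}\bigr),
\]
with the overall sign dictated by whether one takes $\mathbf{I}$ to be the current entering or leaving the network at the boundary, chosen so as to match the theorem's sign convention.

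From this formula the first two claims are essentially free. Symmetry of $K$ implies symmetry of $A$ and $D$, hence $(BD^{-1}B^{T})^{T}=BD^{-1}B^{T}$ and $M_{R}(e)^{T}=M_{R}(e)$. Furthermore the constant voltage vector produces no current, so $K\mathbf{1}=0$; restricting to the boundary gives $M_{R}(e)\mathbf{1}_{\partial}=0$, i.e.\ every row sums to $0$, and by symmetry every column sums to $0$ as well.

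The only genuinely analytic step is the sign statement for off-diagonal entries, which I would handle via the discrete maximum principle. Imposing boundary data $U_{i}=1$ and $U_{j}=0$ for $j\neq i$, the harmonicity condition $\sum_{w\sim v}\omega_{vw}(U_{v}-U_{w})=0$ at each interior vertex $v$ expresses $U_{v}$ as a convex combination (since all $\omega_{vw}>0$) of the voltages of its neighbours; iterating, every interior voltage satisfies $0\le U_{w}\le 1$. The $j$-th boundary current ($j\neq i$) equals $\sum_{w\sim j}\omega_{jw}(U_{j}-U_{w})=-\sum_{w\sim j}\omega_{jw}U_{w}\le 0$, and, up to the global sign fixed above, this is exactly the $(j,i)$ entry of $M_{R}(e)$; the inequality is the required non-negativity.

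I expect the maximum-principle step to be the real content of the proof, since the other two properties are formal consequences of the symmetry and kernel of $K$. Two subtleties deserve attention: verifying non-singularity of $D$ rigorously (this is what makes the Schur complement meaningful and underlies the well-posedness of the boundary value problem), and pinning down the sign convention so that the three claims of the theorem line up consistently with the formula $M_{R}(e)\mathbf{U}=\mathbf{I}$ stated above.
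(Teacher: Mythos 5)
The paper offers no proof of this statement --- it is quoted verbatim from \cite{CIM} --- so there is nothing internal to compare against; your argument is the standard Curtis--Ingerman--Morrow one (Schur complement of the Kirchhoff matrix for existence and symmetry, the kernel vector $\mathbf{1}$ for the vanishing row and column sums, and the discrete maximum principle for the sign of the off-diagonal entries), and it is correct. Two points deserve to be made explicit rather than left as asides. First, invertibility of the interior block $D$ requires that every connected component of the interior subgraph contain a vertex adjacent to the boundary (equivalently, that every connected component of $\Gamma$ meet the boundary circle); this is implicit in the paper's setup, but without it $D$ is genuinely singular and the response matrix is not defined, so it should be stated as a hypothesis rather than folded into the phrase ``diagonally dominant.'' Second, your caution about the global sign is well placed, because the paper itself is not consistent on this point: with the convention $M_R(e)\mathbf{U}=\mathbf{I}$ and the Schur complement $A-BD^{-1}B^{T}$, a single edge of conductance $c$ between two boundary vertices gives off-diagonal entries $-c\le 0$, which matches Theorem \ref{kenwil} ($x_{ij}=-L_{ij}/L\le 0$) and the way Theorem \ref{respc} is actually invoked later (e.g.\ in the proof of Theorem \ref{nonneglagr}, where non-negativity of $-x_{31}$ is what is needed), but contradicts the literal wording ``non-diagonal entries are non-negative.'' So rather than choosing the sign ``to match the theorem,'' you should fix the convention $M_R(e)=A-BD^{-1}B^{T}$ once and for all and record that the off-diagonal entries are then non-positive; your maximum-principle computation proves exactly that.
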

Now we are ready to define an embedding of $E_n$
to $\mathrm{Gr}(k(\Gamma), n)_{\geq 0}$. 

\begin{definition} \label{def-grov1}
 A {\em grove} $G$ on $\Gamma$ is a spanning subforest that is an acyclic subgraph that uses all vertices such that each connected component $G_i \subset G$ contains at least one boundary vertex.
The {\em boundary partition} $\sigma(G)$ is the set partition of $\{\bar 1, \bar 2, . . . , \bar n\}$ which specifies which
boundary vertices lie in the same connected component of $ 
G$. Note that since $\Gamma$ is planar,
 $\sigma(G)$ must
be a {\em non-crossing partition}, also called a {\em  planar set partition}.
\end{definition}
We will often write set partitions in the form $\sigma = (\bar a\bar b\bar c \bar d \bar e| \bar f \bar g|\bar h)$. Denote by $\mathcal NC_n$ the set of non-crossing partitions on $\{\bar 1,\ldots, \bar n\}$.

Each non-crossing partition $\sigma$ on $\{\bar 1, \bar 2, . . . , \bar n\}$ has a {\em dual non-crossing partition} on $\{\tilde 1, \tilde 2, . . . , \tilde n\}$ where by
convention $\tilde i$ lies between $\bar i$ and $\overline {i + 1}$. For example $(\bar1,\bar 4, \bar 6|\bar 2, \bar 3|\bar 5)$ is dual to $(\tilde{1}, \tilde{3}|\tilde{2}|\tilde{4}, \tilde{5}|\tilde{6})$.
\begin{definition} \label{def-grov2}
For a non-crossing partition $\sigma$ we define the grove measurement related to $\sigma$ as follows
$$L_{\sigma(\Gamma)} := \sum_{G|\sigma(G)=\sigma} \mathrm{wt}(G),$$
where the summation is over all groves with boundary partition $\sigma$, and $\mathrm{wt}(G)$ is the product of weights of edges in $G$.
\end{definition}

 In particular we will need the groves measurements of the following non-crossing partitions whose definition is clear from the notations:
 \begin{itemize}
     \item $L:=L_{*|*|*\dots}$;
     \item $L_{ij}:=L_{ij|*|*\dots}$;
     \item  $L_{kk}:=\sum_{i\neq k}L_{ik}.$
 \end{itemize}

\begin{definition} \label{def-elb}
For $e\in E_n$ we define a planar bipartite network $N(\Gamma, \omega)$ embedded into the disk. 
If $\Gamma$ has boundary vertices $\{\bar 1, \bar 2, . . . , \bar n\}$, then $N(\Gamma, \omega)$ will have boundary vertices $\{1, 2, . . . , 2n\}$, where boundary vertex $\bar i$ is identified with $2i-1$, and
a boundary vertex $2i$ in $N(\Gamma, \omega)$ lies between $\bar i$ and $\overline {i + 1}$. The boundary vertex $2i$ can be identified with the vertex $\tilde i$ used to label dual non-crossing partitions. The planar bipartite network $N(\Gamma, \omega)$ always has boundary vertices of degree $1$.
The interior vertices of $N(\Gamma, \omega)$ are as follows: we have a black interior vertex $b_v$ for each interior vertex $v$ of $\Gamma$, and a black interior vertex $b_F$ for each interior face $F$ of $\Gamma$; we have a white interior vertex $w_e$ placed at the midpoint of each interior edge $e$ of $\Gamma$. For each
vertex $\bar i$, we also make a black interior vertex $b_i$. The edges of $N(\Gamma, \omega)$ are defined as follows: 
\begin{itemize}
\item  if $v$ is a vertex of an edge $e$ in $\Gamma$, then $b_v$ and $w_e$ are joined, and the weight of this edge is equal
to the weight $\omega(e)$ of $e$ in $\Gamma$, 
\item  if $e$ borders $F$, then $w_e$ is joined to $b_F$ by an edge with weight $1$, 
\item  the vertex $b_i$ is joined by an edge with weight $1$ to the boundary vertex $2i - 1$ in $N$, and $b_i$
is also joined by an edge with weight 1 to $w_e$ for any edge $e$ incident
to $\bar i$ in $\Gamma$, 
\item  even boundary vertices $2i$ in $N(\Gamma, \omega)$ are joined by an edge with weight $1$
to the face vertex $w_F$ of the face $F$ that they lie in.
\end{itemize}
\end{definition}
\begin{definition} \label{conc}
We call an $(n-1)$-element subset $I \subset 1 \dots 2n$ {\em concordant} with a non-crossing partition $\sigma$ if each part of $\sigma$ and each part of the dual partition $\tilde{\sigma}$
contains exactly one element not in $I$. In this situation we also say that $\sigma$ or $(\sigma, \tilde{\sigma})$ is concordant with $I$.
\end{definition}

\begin{theorem} \cite{L} \label{elcon}
Let $e \in E_n$ and $N(\Gamma, \omega)$ be the bipartite network associated with $e$ then the following hold: 
$$
\Delta_{I}^M= \sum\limits_{(\sigma, I)} L_{\sigma},
$$
here summation is over all such $\sigma,$ which are concordant with $I.$ 
\end{theorem}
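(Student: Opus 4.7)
The plan is to establish a weight-preserving bijection between the set $\Pi(I)$ of almost perfect matchings of the shadow network $N(\Gamma,\omega)$ satisfying the boundary incidence prescribed by $I$, and the set of groves $G$ on $\Gamma$ whose boundary partition $\sigma(G)$ is concordant with $I$. This is a Temperley-type correspondence for a planar graph with boundary, in the spirit of the classical bijection between spanning trees and dimers on a bipartite ``doubling'' of a planar graph.

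First I would set up the decoding $\Pi\mapsto G(\Pi)$ by declaring an edge $e\in E(\Gamma)$ to be in $G(\Pi)$ if and only if the white vertex $w_e$ is matched in $\Pi$ to a \emph{primal} neighbour (one of the $b_v$ for an endpoint $v$ of $e$, or $b_i$ for an incident boundary vertex $\bar i$), and to be absent from $G(\Pi)$ if $w_e$ is matched to a face neighbour $b_F$. The matching constraints of Definition \ref{ld} translate locally into the grove axioms of Definition \ref{def-grov1}: the requirement that every interior $b_v$ be covered forces $G(\Pi)$ to be spanning; the requirement that every face vertex $b_F$ be covered forces, by planar duality on the disk, $G(\Pi)$ to be acyclic; and the requirement that each boundary helper $b_i$ be matched ensures each component of $G(\Pi)$ contains a boundary vertex. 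Hence $G(\Pi)$ is a grove.

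Next I would track the boundary data. The set $I\subset\{1,\ldots,2n\}$ specifies which odd boundary vertices $2i-1$ and which even boundary vertices $2i$ of $N(\Gamma,\omega)$ are incident to $\Pi$. Following the matching locally around the disk, the unmatched odd boundary vertices record which parts of the primal partition $\sigma(G(\Pi))$ are ``rooted'' at $\bar i$, while the unmatched even boundary vertices play the analogous role for the dual partition $\widetilde{\sigma(G(\Pi))}$ at $\tilde i$. A part-by-part comparison shows that this matching pattern is exactly the concordance condition of Definition \ref{conc}. The inverse map sends a grove $G$ with concordant $\sigma(G)$ to its canonical dimer extension, obtained by orienting each primal tree toward its boundary root and each dual tree toward its dual root, and reading off the unique local match at every $w_e$, $b_v$, $b_F$, and $b_i$. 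For the weights, only the edges $b_v - w_e$ of $N(\Gamma,\omega)$ carry a nontrivial weight $\omega(e)$, all other edges having weight $1$; a local count at each interior vertex shows that precisely the edges $e\in G(\Pi)$ contribute their weight $\omega(e)$ to $\mathrm{wt}(\Pi)$, so $\mathrm{wt}(\Pi)=\mathrm{wt}(G(\Pi))$. Summing over $\Pi\in\Pi(I)$ and grouping by $G$, then by $\sigma=\sigma(G)$, yields the identity of the theorem.

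The main obstacle is the Temperley-type bijection on the disk: verifying that the local matching rules at $b_v$, $b_F$ and $b_i$ assemble into a globally consistent pair of primal and dual trees, and that the parity structure of the boundary $\{1,\ldots,2n\}$ aligns correctly with the primal/dual dichotomy so that the concordance condition holds exactly as stated. This requires a careful case analysis at the boundary cells of $N(\Gamma,\omega)$, exploiting that $2i-1$ is identified with the primal boundary vertex $\bar i$ while $2i$ plays the role of the dual boundary vertex $\tilde i$ between $\bar i$ and $\overline{i+1}$, so that the concordance data exchange roles between $\sigma$ and $\widetilde\sigma$ precisely as forced by the matching.
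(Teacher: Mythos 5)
This theorem is not proved in the paper at all --- it is imported verbatim from \cite{L} --- so there is no in-paper argument to compare against; the relevant benchmark is Lam's proof, which (building on the grove--dimer correspondence of \cite{KW} and the generalized Temperley bijection) is exactly the strategy you describe: matchings of the vertex--edge--face bipartite graph decode to a primal forest together with a dual forest, and the boundary set $I$ records the roots of the primal and dual components, which is precisely the concordance condition of Definition \ref{conc}. So your plan is the right one. The difficulty is that, as written, it is a plan and not a proof: the two steps you defer are the entire content of the theorem. Covering every $b_F$ only guarantees one non-$G(\Pi)$ edge on the boundary of each face; acyclicity requires the Euler-characteristic count over the region enclosed by a putative cycle. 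Likewise the bijectivity of the decoding requires showing that the ``canonical dimer extension'' of a concordant grove is well defined (in particular that the complementary edge set is a forest of dual trees rooted at the even elements outside $I$) and that it inverts the forward map. These must actually be carried out, not named.

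There is also a concrete point at which your argument, taken literally against the paper's Definition \ref{def-elb}, fails. You decode $e\in G(\Pi)$ whenever $w_e$ is matched to a primal neighbour, \emph{including} the boundary helpers $b_i$, and you assert that ``only the edges $b_v$--$w_e$ carry a nontrivial weight $\omega(e)$, all other edges having weight $1$.'' Under that reading an edge $e$ of the grove whose white vertex is matched to $b_i$ contributes $1$ rather than $\omega(e)$ to $\mathrm{wt}(\Pi)$, so $\mathrm{wt}(\Pi)\neq\mathrm{wt}(G(\Pi))$. Such matches are unavoidable: whenever a grove component contains two or more boundary vertices, every non-root boundary vertex $\bar j$ forces $b_j$ to be matched to the white vertex of its parent edge. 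Already for the star network of Figure \ref{fig:grove} with $I=\{1,2\}$ the unique matching uses the edge $b_1$--$w_{1v}$, and the identity $\Delta_{12}=L_{13|2}=bc$ holds only if that edge carries weight $\omega(1v)=c$. So either the third bullet of Definition \ref{def-elb} must be corrected to give the $b_i$--$w_e$ edges weight $\omega(e)$ (as in Lam's original construction), or your weight-preservation step collapses; in any case your statement of which edges carry nontrivial weight is inconsistent with the bijection you build and needs to be fixed before the local weight count goes through.
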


\begin{example}
For the networks on Fig. \ref{fig:grove} we have
$$
L_{1|2|3}=a+b+c,\; L_{123}=abc,\; L_{12|3}=ac,\; L_{13|2} = bc,\; L_{23|1} =ab;
$$
$$
\Delta_{12}=\Delta_{45}=bc,\; \Delta_{23}=\Delta_{56}=ab,\; \Delta_{34}=\Delta_{16}=ac,\; \Delta_{13}=\Delta_{35}=\Delta_{15}=abc,
$$
$$
\Delta_{24}=\Delta_{46}=\Delta_{26}=a+b+c,\; \Delta_{14}=ac+bc,\;\Delta_{25}=ba+bc,\;\Delta_{36}=ab+ac.
$$ 
\end{example}

\begin{figure}[h]
     \centering
     \includegraphics[scale=1]{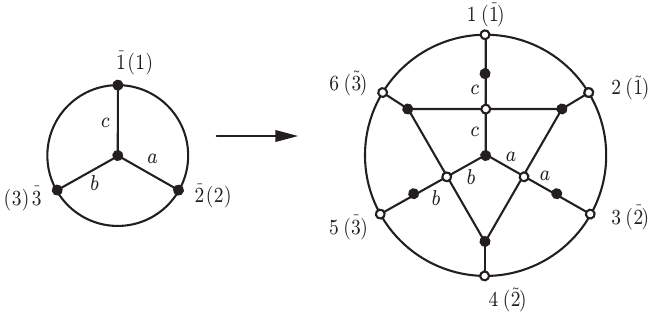}
     \caption{Grove measurements and Plucker coordinates}
     \label{fig:grove}
\end{figure}

Now we establish the connection between the grove measurements $L, L_{ij}, L_{kk}$ and the Plucker coordinates derived out of a bipartite network:
   
 \begin{lemma} \label{lemmal}
 For  $e(\Gamma,\omega)\in E_n$ and  $N(\Gamma, \omega)$ as above the following holds:
 $$L=\Delta_{R}^M,$$ where $R$ is any set of even indices
 such that  $|R|=n-1$.

 $$L_{ij}=\Delta_{\{2i-1\}\cup N}^M,$$ where $N$ is the set of all even indices except of the two closest to  
 $2j-1$ and $|N|=n-2.$ 
 
 $$L_{kk}=\Delta_{\{2k-1\}\cup T}^M$$ where $T$ is the set of all even indices except of the two closest to $2k-1$ and $|T|=n-2$. 
 \end{lemma}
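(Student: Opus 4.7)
The plan is to apply Theorem \ref{elcon} in each case: for the prescribed column set $I$, enumerate the non-crossing partitions $\sigma$ concordant with $I$ (Definition \ref{conc}) and check that $\sum L_\sigma$ over the concordant $\sigma$ reduces to the claimed grove measurement. Throughout I use the identifications $\bar i = 2i-1$ and $\tilde i = 2i$, together with the fact that $|I^c|=n+1$ equals the total number of blocks of $\sigma$ plus the number of blocks of its dual $\tilde\sigma$.

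For the first identity, $R$ consists of $n-1$ even indices, so $R^c$ contains all $n$ odd indices and one missing even index. Concordance then forces each block of $\sigma$ to contain exactly one odd index, so $\sigma$ must be the all-singletons partition $\bar 1|\bar 2|\cdots|\bar n$; its dual is the single block $\{\tilde 1,\ldots,\tilde n\}$, which contains precisely the one missing even index. Hence this is the unique concordant partition and $\Delta_R^M = L$.

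For the remaining two identities, $I$ contains a single odd index $\bar\ell$ (equal to $\bar i$ in identity 2 and to $\bar k$ in identity 3) while $I^c$ contains exactly two even indices: $\tilde{j-1},\tilde j$ in identity 2 and $\tilde{k-1},\tilde k$ in identity 3. Concordance on $\sigma$ forces $\sigma$ to have the form $\{\bar\ell,\bar m\}\,|\,\text{singletons}$ for some $m\neq\ell$: every block must contain exactly one index from $I^c\cap\{\bar 1,\ldots,\bar n\}=\{\bar t:t\neq\ell\}$, and only the block of $\bar\ell$ has room for a second element. The dual $\tilde\sigma$ then has exactly two blocks, consisting of the $\tilde$-vertices on the two boundary arcs cut by the chord $\bar\ell\bar m$, and concordance on $\tilde\sigma$ requires the two missing even indices to lie on opposite sides of this chord.

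This is where the two identities diverge. In identity 2 the missing pair $\tilde{j-1},\tilde j$ is adjacent to $\bar j\neq\bar i$, so the separation condition forces $\bar j$ to be an endpoint of the chord, pinning down $m=j$; the unique concordant partition is $\bar i\bar j|*|\cdots|*$ and so $\Delta_I^M=L_{ij}$. In identity 3 the missing pair $\tilde{k-1},\tilde k$ is adjacent to $\bar k$ itself, which is already one endpoint of every candidate chord $\bar k\bar m$, so separation is automatic; every $m\neq k$ yields a concordant partition and $\Delta_I^M=\sum_{m\neq k}L_{km}=L_{kk}$. The only subtle point is the block-arc description of the dual of a ``pair plus singletons'' partition, which is a standard feature of the Kreweras complement and is transparent once drawn with the convention that $\tilde i$ sits between $\bar i$ and $\overline{i+1}$.
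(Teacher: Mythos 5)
Your proposal is correct and follows essentially the same route as the paper's own proof: in each case you enumerate the non-crossing partitions concordant with the given index set via Theorem \ref{elcon} and Definition \ref{conc}, reducing the first case to the all-singletons partition, the second to the unique pair $\bar i\bar j$ (via the separation condition on the dual), and the third to the sum over all pairs $\bar k\bar m$. Your write-up is in fact somewhat more explicit than the paper's about why the dual of a ``pair plus singletons'' partition consists of the two arcs cut by the chord and how the separation condition pins down (or fails to pin down) the second endpoint.
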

\begin{proof}
The dual for the partition $\sigma=(\bar{1}|\bar{2}|\dots|\bar{n})$ is $\widetilde{\sigma}=(\widetilde{1} \widetilde{2} \dots \widetilde{n}).$ It is easy to see that the set $(\sigma, \widetilde{\sigma})$ is concordant with $R$  and there are no  other $\sigma$ with this property, so $L=\Delta_{R}^M.$

The dual for the partition $\sigma=(\bar{i}\bar{j}|*|\dots|*)$ has the form $\widetilde{\sigma}=(\widetilde{A}\setminus\{\widetilde{j_1}\}|\widetilde{B}\setminus\{\widetilde{j_2}\})$ where $\widetilde{j_1}$ and $\widetilde{j_2}$ are two closest to $\bar{j}$ and $\widetilde{A}, \widetilde{B}$ are subsets of $(\widetilde{1} \widetilde{2} \dots \widetilde{n})$. It is easy to see that $\sigma$ is concordant with  the set $\{2i-1\}\cup N.$
 
 Suppose there is another $\sigma$ concordant with  the set $\{2i- 1\}\cup N$ then $\sigma$ must have the form $\sigma=(\bar{i}\bar{k}|*|\dots|*)$ ($k\neq j$) and its dual has the form $\widetilde{\sigma}=(\widetilde{A'}|\widetilde{B'})$, where $\widetilde{A}', \widetilde{B}'$ are subsets of $(\widetilde{1} \widetilde{2} \dots \widetilde{n})$. Then at least one of the  two closest to  $2j-1$ must belong to $J$, because either $\widetilde{A'}$ or $\widetilde{B'}$ contains both $\widetilde{j}_1$ and $\widetilde{j}_2$, hence we get a contradiction and $L_{ij}=\Delta_{\{2i-1\}\cup N}^M.$

 It is easy to see that each partition of the form $\sigma=(\bar{k}\bar{i}|*|\dots|*)$ is concordant with  the set $\{2k-1\}\cup T.$
 Finally notice that any $\sigma$ which is concordant with the set $\{2k-1\}\cup T$ must have a form $\sigma=(\bar{k} \bar{i}|*|\dots|*)$ ($k \neq i$). On the other hand we have already listed all such  $\sigma$, therefore $L_{kk}=\Delta_{\{2k-1\}\cup T}^M$.
 \end{proof}
 \begin{example}
 For any network from $E_5$ we have $L_{13}=\Delta_{\{1,2,8,10\}}^M$ and $L_{11}=\Delta_{\{1,4,6,8\}}^M.$
 \end{example}
 The following theorem was proved in \cite{KW}:
\begin{theorem}  \label{kenwil}
For an electrical network $e(\Gamma,\omega)\in E_n$  the off-diagonal elements of its response matrix $M_R(e)$ satisfies the following relation: 
 $$x_{ij}=-\frac{L_{ij|*|*|*\ldots}}{L_{*|*|*\ldots}}:=-\frac{L_{ij}}{L}.$$
\end{theorem}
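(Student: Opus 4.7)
The plan is to derive the identity from the Schur-complement description of the response matrix together with Kirchhoff's matrix-tree theorem in its all-minors form. Let $\mathcal{L}$ denote the weighted Kirchhoff Laplacian of $\Gamma$, defined by $\mathcal{L}_{uv}=-\omega(uv)$ for $u\neq v$ joined by an edge and $\mathcal{L}_{uu}=\sum_v\omega(uv)$. Splitting the vertex set into the boundary part $V_0$ and the interior part $V_{\mathrm{int}}$ gives the block decomposition
\[
\mathcal{L}=\begin{pmatrix} A & B \\ B^T & C \end{pmatrix},
\]
and eliminating the interior voltages from Kirchhoff's equations (the interior currents vanish) yields the familiar formula $M_R(e)=A-B\,C^{-1}B^T$.

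For the off-diagonal entry $x_{ij}$ with $i\neq j$ one may assume, after subdividing any direct boundary-to-boundary edge (an operation that alters neither the response matrix nor the grove polynomials), that $A_{ij}=0$. Therefore
\[
x_{ij}=-\bigl(B\,C^{-1}B^T\bigr)_{ij}=-\frac{1}{\det C}\sum_{k,\ell\in V_{\mathrm{int}}}B_{ik}\,\mathrm{adj}(C)_{k\ell}\,B_{j\ell}.
\]
The first step is to invoke the classical Kirchhoff matrix-tree theorem for the graph with all boundary vertices grounded: it identifies $\det C$ with the weighted sum over spanning forests of $\Gamma$ in which each tree contains exactly one boundary vertex, i.e.\ with $L=L_{*|*|\cdots|*}$.

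The second step identifies the double sum in the numerator with $L_{ij}$. After gathering the cofactor signs this double sum equals, up to an overall sign, the minor of $\mathcal{L}$ obtained by keeping the rows indexed by $V_{\mathrm{int}}\cup\{\bar i\}$ and the columns indexed by $V_{\mathrm{int}}\cup\{\bar j\}$. Here I would apply the all-minors matrix-tree theorem of Chaiken (the tool used in \cite{KW}), which interprets such a minor as a weighted enumeration of spanning forests of $\Gamma$ in which $\bar i$ and $\bar j$ lie in the same tree while every other boundary vertex sits in a tree of its own; this is exactly the quantity $L_{ij}$ of Definition \ref{def-grov2}. As an alternative, the same identification can be obtained by directly computing the harmonic extension of the boundary potential $U_k=\delta_{ki}$ via Cramer's rule and reading off $-x_{ij}=I_j$ from the resulting determinantal expression.

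The principal obstacle is a careful bookkeeping of signs. The all-minors matrix-tree theorem carries a sign depending on how the deleted rows are matched with the deleted columns through the connected components of the forest, and this has to be reconciled against the $(-1)^{k+\ell}$ from the adjugate expansion and against the negative off-diagonal entries $B_{ik}=-\omega(\bar i\,k)$. Once these cancellations are tracked the overall sign works out to $-1$, which gives the stated formula $x_{ij}=-L_{ij}/L$.
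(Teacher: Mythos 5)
The paper contains no proof of this statement to compare against: Theorem \ref{kenwil} is quoted as an external result from \cite{KW} and is used as a black box throughout. Judged on its own, your proposal follows the standard route --- the Schur complement formula $M_R(e)=A-B\,C^{-1}B^{T}$ for the Dirichlet-to-Neumann map combined with the all-minors matrix-tree theorem, identifying $\det C$ with $L$ and the bordered minor of $\mathcal{L}$ with $L_{ij}$ --- which is essentially the argument in the cited literature, and the outline is sound. Two points deserve care. First, subdividing a boundary-to-boundary edge does change the grove polynomials: replacing an edge of conductance $c$ by two series edges of conductance $2c$ multiplies \emph{every} $L_{\sigma}$ by the common factor $4c$, so what is preserved is the ratio $L_{ij}/L$ rather than the polynomials themselves; that is all you need, but the claim as written is false, and the detour is in any case unnecessary, since the Jacobi/Schur determinant identity $\det\mathcal{L}\bigl[V_{\mathrm{int}}\cup\{\bar i\},\,V_{\mathrm{int}}\cup\{\bar j\}\bigr]=\pm\det(C)\,x_{ij}$ already accounts for the direct term $A_{ij}$ (a boundary-to-boundary edge contributes both to $A_{ij}$ and to groves joining $\bar i$ to $\bar j$). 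Second, the sign analysis in Chaiken's theorem is asserted rather than carried out; since the entire content of the formula beyond ``some minor of $\mathcal{L}$ equals $\pm L_{ij}$'' is the overall sign $-1$, a complete write-up must track it explicitly --- here it is manageable because the relevant forests have exactly one non-singleton boundary block, so the permutation matching deleted rows to deleted columns is a single transposition. With those repairs the argument is correct.
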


\section{Electrical networks and Grassmannian}
\label{sec:networks}
In this section we define a new parametrisation of the image of $E_n$ in $\mathrm{Gr}(n-1,2n)$ constructed in \cite{L}. This parametrisation is based on the ideas from the Postnikov paper \cite{P} on the parametrisation of the non-negative Grassmannian. 
 
\subsection{New parametrisation}

\begin{definition}
Let $e(\Gamma,\omega)\in E_n$ and $M_R(e)=(x_{
ij})$ denote its response matrix. Define a point $X(e)$ in
$\mathrm{Gr}(n-1,2n)$ associated to $e(\Gamma,\omega)$ as the row space of the matrix:
\bea
\Omega_n(e)=\left(
\begin{array}{cccccccc}
x_{11} & 1 & -x_{12} & 0 & x_{13} & 0 & \cdots & (-1)^n\\
-x_{21} & 1 & x_{22} & 1 & -x_{23} & 0 & \cdots & 0 \\
x_{31} & 0 & -x_{32} & 1 & x_{33} & 1 & \cdots & 0 \\
\vdots & \vdots &  \vdots &  \vdots &  \vdots & \vdots & \ddots & \vdots
\end{array}
\right)
\eea
\end{definition}
\begin{example} 

For $e\in E_4$, $\Omega_4(e)$ has the following form: 
 \begin{equation*}
\Omega_4(e) = \left(
\begin{array}{ccccccccc}
x_{11}& 1  & -x_{12}  & 0 & x_{13} & 0 & -x_{14} & 1 \\
-x_{21}& 1  & x_{22}  & 1 & -x_{23} & 0 & x_{24} & 0 & \\
x_{31}& 0  & -x_{32}  & 1 & x_{33} & 1 & -x_{34} & 0 & \\
-x_{41}& 0  & x_{42}  & 0 & -x_{43} & 1 & x_{44} & 1 & \\
\end{array}
\right),
\end{equation*} 

\end{example}
The sum of the elements in each row in $M_R(e)$ is equal to zero hence the rank of $\Omega_n(e)$ is equal to $n-1$ and it indeed defines a point in $\mathrm{Gr}(n-1,2n)$.

Here is one of the main results of the paper:

\begin{theorem}\label{maint} The row space of \, $\Omega_n(e)$ defines the same point in
$\mathrm{Gr}_{\geq 0}(n-1,2n)$ as the point defined by $e(\Gamma,\omega)$ under the Lam embedding.
\end{theorem}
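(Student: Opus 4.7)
The plan is to show that the maximal minors of $\Omega_n(e)$ agree, up to a common nonzero scalar, with the boundary measurements $\Delta_I^M$ from Theorem~\ref{elcon}; since a point of $\mathrm{Gr}(n-1,2n)$ is determined projectively by its Pl\"ucker coordinates, this implies the theorem, and the non-negativity then follows from Lam's embedding landing in $\mathrm{Gr}_{\geq 0}(n-1,2n)$. To fix the scalar I pick the reference subset $R=\{2,4,\ldots,2(n-1)\}$: Lemma~\ref{lemmal} gives $\Delta_R^M=L$. On the $\Omega_n(e)$ side, the linear relation $\sum_{i=1}^n(-1)^{i+1}\mathrm{row}_i(\Omega_n(e))=0$ holds -- on each odd column $2j-1$ the weighted sum reduces to $(-1)^{j+1}\sum_i x_{ij}=0$ by the zero column sums of $M_R(e)$, and on each even column it is a direct circulant cancellation (the entry $(-1)^n$ in the last column of row~$1$ is exactly what is needed). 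Hence $\mathrm{rank}(\Omega_n(e))=n-1$; after dropping the last row, the submatrix of rows $1,\ldots,n-1$ on the columns $R$ is lower bidiagonal with $1$'s on the main and sub-diagonals, so its determinant is $1$. The proportionality scalar is therefore $L$, and the theorem reduces to verifying $p_I(\Omega_n(e))=\Delta_I^M/L$ for every $(n-1)$-subset $I\subset\{1,\ldots,2n\}$.

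\textbf{Case split.} Stratify $I$ by $k:=|I\cap\{1,3,\ldots,2n-1\}|$. The case $k=0$ is handled above. For $k=1$, write $I=\{2i-1\}\cup N$ with $N$ a set of $n-2$ even indices; Laplace expansion along the sparse even columns turns the minor into $\pm x_{ij}$, or a short linear combination of such entries, where $j$ is controlled by which two even indices are absent from $N$. Combining Theorem~\ref{kenwil} with the formula $x_{ii}=L_{ii}/L$ (a consequence of the zero row sums of $M_R(e)$ and the definition $L_{ii}=\sum_{j\neq i}L_{ij}$) and Lemma~\ref{lemmal}, the identity follows. For $k\geq 2$, the same Laplace expansion yields a rational expression which, after substituting $x_{ij}=-L_{ij}/L$ and $x_{ii}=L_{ii}/L$, becomes a polynomial in the grove measurements $L_{ij}$ divided by $L^k$; this has to be matched with $L^{k-1}\Delta_I^M$, where Theorem~\ref{elcon} writes $\Delta_I^M=\sum_\sigma L_\sigma$, summed over non-crossing partitions $\sigma$ of $\{\bar1,\ldots,\bar n\}$ concordant with $I$.

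\textbf{Main obstacle.} The combinatorial matching in the case $k\geq 2$ is where the essential work lies. A direct combinatorial route would establish a bijection between the nonvanishing terms of the Laplace expansion and the pairs $(\sigma,\tilde\sigma)$ of Kreweras-dual non-crossing partitions concordant with $I$: the odd indices present in $I$ prescribe the block structure of $\sigma$, while the even indices present in $I$ prescribe that of $\tilde\sigma$. A more structural and, I expect, cleaner route is to re-interpret $\Omega_n(e)$, after elementary row operations, as the extended boundary measurement matrix (Definition~\ref{def:extendedmatr}) associated with a distinguished perfect orientation on the bipartite network $N(\Gamma,\omega)$ of Definition~\ref{def-elb} in which the even boundary vertices of $N(\Gamma,\omega)$ serve as sources; Theorem~\ref{link} would then identify the two sets of Pl\"ucker coordinates up to the weight of the corresponding perfect matching, which evaluates to $L$. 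Producing the correct perfect orientation and verifying that its extended matrix matches $\Omega_n(e)$ on individual entries (including the signs dictated by winding numbers) is the principal technical task.
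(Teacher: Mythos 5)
Your proposal sets up the right normalization (the minor on $R=\{2,4,\dots,2n-2\}$ equals $1$, the alternating sum of rows vanishes, so it suffices to prove $p_I(\Omega_n(e))=\Delta_I^M/L$), and your ``route (b)'' is in fact the strategy the paper follows. But the proof is not complete: the case $|I\cap\{1,3,\dots,2n-1\}|\geq 2$, which you yourself label the main obstacle, is exactly where all the content of the theorem lives, and neither of your two suggested routes is carried out. A proposal that ends by naming ``the principal technical task'' has a genuine gap at that task.

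Two concrete points about how the paper closes this gap, which also expose a small flaw in your route (b) as stated. First, you cannot take a single perfect orientation ``in which the even boundary vertices serve as sources'': a perfect orientation of $N(\Gamma,\omega)$ has exactly $k(\Gamma)=n-1$ sources, while there are $n$ even boundary vertices. The paper instead uses $n$ different perfect orientations $O_k$, with source sets $I_k$ equal to all even indices except the one counterclockwise-closest to $2k-1$; the row of the extended matrix $A_k$ indexed by the clockwise-closest even source is shown to be (up to the sign matrix $D_n$) the $k$-th row of $\Omega_n(e)$, and Theorem~\ref{chanorient} is then invoked to see that all the $A_k$ define one and the same point, so all $n$ rows of $\Omega_n(e)$ lie in that common $(n-1)$-dimensional row space. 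Second, the entry-by-entry identification never touches winding numbers directly: each entry $a_{j^k\to m}$ of $A_k$ is, up to sign, itself a maximal minor $\Delta_{\{m\}\cup I_k\setminus\{j^k\}}(A_k)$, and Talaska's flow formula (Theorem~\ref{tal}) together with Theorem~\ref{link} gives $\Delta_J(A_k)=\Delta_J^M/L$ (Corollary~\ref{contal}); combining this with Lemma~\ref{lemmal} and Theorem~\ref{kenwil} evaluates each entry as $\pm x_{k,(m+1)/2}$, $x_{kk}$, $0$ or $\pm1$. This mechanism simultaneously handles all strata of $I$ at once, so the case split by $|I\cap\mathrm{odd}|$ and the combinatorial bijection with concordant pairs $(\sigma,\tilde\sigma)$ that you propose for $k\geq 2$ are never needed. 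If you want to complete your write-up, you should prove the analogue of Corollary~\ref{contal} and the entry computation (the paper's Theorem~\ref{constr}); without them the argument does not go through.
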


Let $N(\Gamma, \omega)$ be the bipartite network associated to $e\in E_n$ and  $O(I)$ be a perfect orientation  on $N(\Gamma, \omega)$ with the set of sources $I$ labeled by even indices such that $|I|=n-1$. Such an orientation exists by Lemma \ref{lemmal}:
$$\Delta_{I}^M=\sum\limits_{\Pi \in \Pi(I)}\mathrm{wt}(\Pi)=L \neq 0,$$ 
so by Remark \ref{bij} for each $\Pi \in \Pi(I)$ there is a perfect orientation $O_{\Pi}(I)$.


Our goal is to connect the Plucker coordinates corresponding to the networks $N(\Gamma, \omega', O)$, $N(\Gamma, \omega)$,  and the Postnikov network $PN(\Gamma, \omega',O)$, where the weight function $\omega'$ is defined in \ref{link}. 

We use the following statement from \cite{T}:
\begin{theorem} \cite{T} \label{tal}
Let $PN(\Gamma, \omega , O)$ be a Postnikov network with the set of sources $I$ and $A$ be its extended boundary measurements matrix, then the minors of the matrix $A$ can be calculated as follows:
$$\Delta_J(A)=\frac{\sum_F \mathrm{wt}(F)}{\sum_C \mathrm{wt}(C)},$$
where the sum in the numerator is over all flows whose paths connect vertices in the set of sources  $I$ with vertices in the set $J$. The sum in the denominator is over all flows which do not connect points on the boundary.
\end{theorem}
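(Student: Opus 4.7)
The plan is to expand $\Delta_J(A)$ via the Leibniz formula, convert the resulting signed path sums into a sum over flows, and show that the denominator $\sum_C \mathrm{wt}(C)$ appears via a cycle-cover identity for $\det(I-B)$, where $B$ is the internal transfer matrix of the network.

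First, I would give the explicit transfer-matrix description of the entries of $A$. Let $B$ denote the weighted signed adjacency matrix on the set of internal vertices of $\Gamma$, where each edge carries its weight together with the local phase contributing to the winding index. Then each boundary measurement $M_{i_r j}$ is obtained as an entry of $(I-B)^{-1}$ paired with incidence vectors of the source $i_r$ and the sink $j$, interpreted first as a formal power series in the edge weights and then as a rational function. Consequently every entry of $A$ is of the form (integer polynomial in edge weights)$/\det(I-B)$, and the classical cycle-cover expansion $\det(I-B) = \sum_C \mathrm{wt}(C)$ (where $C$ runs over vertex-disjoint unions of oriented cycles and the winding phases kill the usual $(-1)^{|C|}$) identifies this common denominator with the announced denominator.

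Next I would expand $\Delta_J(A) = \sum_{\pi} \mathrm{sgn}(\pi) \prod_r a_{r,\pi(r)}$ as a sum over bijections $\pi$ between the source set $I$ and the subset $J$, substitute the signed path sum for each factor, and interchange the summation. Multiplying through by $\sum_C \mathrm{wt}(C)$, the result becomes a signed sum over pairs (tuple of paths from $I$ to $J$, cycle configuration disjoint from nothing), i.e.\ over arbitrary collections of directed paths and cycles. Grouping this sum by the \emph{union} of paths and cycles produces the sum over flows $F$ from Proposition \ref{pf1}, with a sign equal to the product of $\mathrm{sgn}(\pi)$, the path winding signs, the extension signs $(-1)^s$ from Definition \ref{def:extendedmatr}, and a sign from the chosen decomposition of $F$ into paths and cycles.

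The core step is then to verify that, for any flow $F$, these signs collapse to $+1$ so that only the positive weight $\mathrm{wt}(F)$ remains. This is a Lindström-Gessel-Viennot style argument adapted to the planar winding setting: whenever a tuple contains two paths (or a path and a cycle) that share an internal vertex, performing the standard swap at that vertex changes $\pi$ by a transposition, reverses the relative winding contribution, and toggles the choice of decomposition into paths vs.\ cycles; one checks that this is a fixed-point-free, sign-reversing involution on the non-planar configurations, so the signed sum reduces to the unique planar decomposition of $F$, where all signs conspire to be $+1$ by the standard planar cyclic-order comparison underlying Postnikov's formula.

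The main obstacle is precisely this sign bookkeeping. Three sources of signs must be tracked simultaneously: the winding index $(-1)^{\mathrm{wind}(p)}$ of each path, the extension sign $(-1)^s$ encoding the cyclic order of sources and sinks on the boundary, and the permutation sign $\mathrm{sgn}(\pi)$ from the determinant. I expect that establishing their coherent cancellation requires either a careful parity computation at each swap (extending each boundary vertex to a ray at infinity so that path crossings are geometrically well-defined and each crossing contributes a definite factor of $-1$), or, alternatively, an inductive gauge reduction to the acyclic case where the standard LGV lemma applies verbatim.
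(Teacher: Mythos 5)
First, a point of comparison: the paper does not prove this statement at all — Theorem \ref{tal} is imported verbatim from Talaska's paper \cite{T} and used as a black box, so there is no internal proof to measure your attempt against. Your outline does follow the broad strategy of the actual proof in \cite{T} (expand the determinant, substitute Postnikov's signed walk sums for the entries, clear the denominator by the cycle generating function, and cancel everything except vertex-disjoint configurations by a sign-reversing involution), but as written it has concrete gaps at exactly the points that carry all of the difficulty.

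The first gap is the transfer-matrix step. A matrix $B$ indexed by the \emph{internal vertices} cannot encode the factor $(-1)^{\mathrm{wind}(p)}$: the winding index is the total turning of the walk, a sum of contributions attached to consecutive \emph{pairs} of edges, not to single edges or vertices. To localize it you must pass to a transfer matrix on directed edges, and then the cycle-cover expansion of $\det(I-B)$ runs over families of closed walks that are edge-disjoint but may revisit vertices; for a self-intersecting closed walk the turning number is no longer $\pm 1$, so the claimed cancellation of the determinantal sign against the winding phases fails, and the identity $\det(I-B)=\sum_C \mathrm{wt}(C)$ with $C$ running over vertex-disjoint cycle collections needs its own cancellation argument that you have not supplied. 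The second gap is the involution itself. Each entry $M_{i_rj}$ is an \emph{infinite} sum over walks that may traverse cycles arbitrarily often, so the configurations to be cancelled include walks with repeated vertices and repeated loops; ``the standard swap at a shared internal vertex'' is then ambiguous (which visit of the vertex? which walk absorbs which loop?), and making it a well-defined, fixed-point-free, sign-reversing involution compatible with the extra factor $\sum_C \mathrm{wt}(C)$ is the bulk of Talaska's argument — it requires a loop-erasure decomposition of each walk in the spirit of Fomin. Finally, the assertion that for the surviving vertex-disjoint configurations the product of $\mathrm{sgn}(\pi)$, the winding signs and the signs $(-1)^s$ collapses to $+1$ is essentially Postnikov's positivity theorem; you correctly flag it as the main obstacle but leave it unresolved, and the fallback that ``the standard LGV lemma applies verbatim in the acyclic case'' is not accurate, because the sources and sinks are interleaved on the boundary circle — which is precisely what the winding and $(-1)^s$ signs exist to compensate. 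In short: the skeleton is the right one, but the three steps you defer \emph{are} the theorem.
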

As a corollary we obtain:
\begin{corollary} \label{contal}
Let $N(\Gamma,\omega)$ be the bipartite network associated to $(\Gamma,\omega)\in E_n$ and $O(I)$ be a perfect orientation with the set of sources $I$ labeled by even indices. For a Postnikov network $PN(\Gamma, \omega',O)$ the minors of the extended boundary measurement matrix $A$ obey
$$\Delta_J(A)=\frac{\Delta_J^M}{L},$$
where $L=\Delta_I^M$ is a grove measurement as in Lemma \ref{lemmal}. 
\end{corollary}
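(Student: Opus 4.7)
The plan is to derive the corollary as a direct specialization of Theorem \ref{tal}, using the bijection from Remark \ref{bij} to translate Talalaev's ratio of flow sums (computed in the Postnikov network with weight $\omega'$) into a ratio of almost perfect matching sums (computed in the bipartite network with weight $\omega$). The normalizing factor that appears in both numerator and denominator will cancel, leaving the clean quotient $\Delta_J^M/L$.

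First I would apply Theorem \ref{tal} to $PN(\Gamma,\omega',O)$ to write
\[
\Delta_J(A)=\frac{\sum_{F}\mathrm{wt}_{\omega'}(F)}{\sum_{C}\mathrm{wt}_{\omega'}(C)},
\]
where $F$ ranges over flows whose paths connect the source set $I$ to the index set $J$, and $C$ ranges over flows which are pure unions of cycles (i.e., do not reach the boundary). The next step is to set up, for each flow $F$ appearing in either sum, the symmetric difference map $\Pi\longleftrightarrow \Pi\triangle\Pi_0=:F$, where $\Pi_0\in\Pi(I)$ is the almost perfect matching corresponding to the perfect orientation $O$ under Remark \ref{bij}. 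Since $O$ has source set $I$, this map is a bijection between $\Pi(J)$ and the set of flows whose source-sink paths connect $I$ to $J$, and a bijection between $\Pi(I)$ and the set of pure cycle flows $C$.

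The weight comparison is exactly the one underlying Theorem \ref{link}: the definition of $\omega'$ sends an edge $e$ oriented black-to-white (i.e., $e\in\Pi_0$) to weight $\omega(e)^{-1}$ and an edge oriented white-to-black (i.e., $e\notin\Pi_0$) to weight $\omega(e)$. Splitting $\Pi$ and $\Pi_0$ into their common part and their differences and cancelling, one gets
\[
\mathrm{wt}_{\omega'}(\Pi\triangle\Pi_0)=\prod_{e\in\Pi\setminus\Pi_0}\omega(e)\cdot\!\!\prod_{e\in\Pi_0\setminus\Pi}\omega(e)^{-1}=\frac{\mathrm{wt}(\Pi)}{\mathrm{wt}(\Pi_0)}.
\]
Summing this identity over $\Pi\in\Pi(J)$ and over $\Pi\in\Pi(I)$ yields
\[
\sum_{F}\mathrm{wt}_{\omega'}(F)=\frac{\Delta_J^M}{\mathrm{wt}(\Pi_0)},\qquad \sum_{C}\mathrm{wt}_{\omega'}(C)=\frac{\Delta_I^M}{\mathrm{wt}(\Pi_0)}=\frac{L}{\mathrm{wt}(\Pi_0)},
\]
using Definition \ref{ld} and Lemma \ref{lemmal} for the identification $\Delta_I^M=L$. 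Plugging these into Talalaev's formula, the factor $\mathrm{wt}(\Pi_0)$ cancels and we obtain the claim.

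The main obstacle, such as it is, lies in making the bijection statement precise: one has to verify that $\Pi\triangle\Pi_0$ is indeed a legal flow in the perfect network (the vertex balance condition at every internal vertex follows from each internal vertex being incident to exactly one edge of both $\Pi$ and $\Pi_0$), and that the source/sink profile of the resulting flow matches the $I\to J$ condition in Theorem \ref{tal} precisely when $\Pi\in\Pi(J)$. Once this combinatorial dictionary is in place, the rest of the proof is the weight-matching computation above and a cancellation.
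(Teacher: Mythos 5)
Your argument is correct and follows essentially the same route as the paper: apply Theorem \ref{tal}, convert the flow sums into matching sums at the cost of a common factor $\mathrm{wt}(\Pi_0)$, and cancel, using Lemma \ref{lemmal} to identify $\Delta_I^M$ with $L$. The only cosmetic difference is that you evaluate the denominator $\sum_C \mathrm{wt}(C)=L/\mathrm{wt}(\Pi_0)$ directly via the symmetric-difference bijection $\Pi\mapsto\Pi\triangle\Pi_0$ (in effect inlining the proof of Theorem \ref{link}, including its $J=I$ case), whereas the paper obtains the same identity by citing Theorem \ref{link} and combining it with the normalization $\Delta_I(A)=1$ from Definition \ref{matrpost}.
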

\begin{proof}
According to Theorems \ref{link} and \ref{tal} we have:   
$$\Delta_J(A)=\frac{\sum_F \mathrm{wt}(F)}{\sum_C \mathrm{wt}(C)}=\frac{\Delta_J^F}{\sum_C
\mathrm{wt}(C)}=\frac{\Delta_J^M}{\mathrm{wt}(\Pi_0)\sum_C \mathrm{wt}(C)},$$
where $\mathrm{wt}(\Pi_0)$ is the weight of the almost perfect matching corresponding to the orientation $O(I)$ as explained in Remark \ref{bij}. On the one hand, Definition \ref{matrpost} implies that $\Delta_I(A)=1$, on the other hand we have:
$$\Delta_I(A)=\frac{\sum_F \mathrm{wt}(F)}{\sum_C \mathrm{wt}(C)}=\frac{\Delta_I^F}{\sum_C \mathrm{wt}(C)}=\frac{\Delta_I^M}{\mathrm{wt}(\Pi_0)\sum_C \mathrm{wt}(C)},$$
therefore $\Delta_I^M=\mathrm{wt}(\Pi_0)\sum \limits_C \mathrm{wt}(C)$. To finish the proof we use Lemma \ref{lemmal} which implies that 
$$L=\Delta_I^M=\mathrm{wt}(\Pi_0)\sum_C \mathrm{wt}(C).$$ 
\end{proof}

Now we are ready to prove our theorem.  Fix an electrical network $e(\Gamma,\omega)\in E_n$ and an arbitrary $k \in [1\dots n]$. Let $I_k$  be the set of all even indices from $[2 \dots 2n]$ except the  even index closest to $2k-1$ {\em counterclockwise}. We will denote it by $j_k$. It is clear that $|I_k|=n-1$. 
Denote by  $j^k$ the even index closest to $2k-1$ {\em clockwise}.

For $e(\Gamma,\omega)$ and $N(\Gamma,\omega)$ be as above let  $PN(\Gamma,\omega',O_k)$ 
be the Postnikov network where $O_k$ is a perfect orientation with the set of sources  $I_k$. Let $A_k$ be the extended boundary measurement matrix of $PN(\Gamma,\omega',O_k)$, see Definition \ref{def:extendedmatr}. 
\begin{theorem} \label{constr}
Denote by $a_{j^k \to m}$ the entry of $A_k$ which corresponds to the weight of the flow
 from $j^k$ to $m.$ Then the following holds:
\\
1. $a_{j^k \to m}=(-1)^{s+1}x_{k\frac{m+1}{2}}$ for any sink with an odd index $m \neq 2k-1$ where $s$ is the number of the elements in $I_k$ situated between the source labeled by $j^k$ and the sink labeled by $m$;\\
2. $a_{j^k \to m}=x_{kk}$ for the sink labeled by $m = 2k-1$;\\
3. $a_{j^k \to m}=(-1)^s$ for the sink labeled by $m = j_k$ where $s$ is the number of the elements in $I_k$ situated between the source labeled by $j^k$ and the sink labeled by $j_k$;\\
4. $a_{j^k \to j^k}=1$; \\
5. $a_{j^k \to m}=0$ if there is a source labeled by $m \neq j^k$.
\end{theorem}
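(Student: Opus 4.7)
The plan is to translate each entry $a_{j^k \to m}$ into a Plucker coordinate of $A_k$ via the minor-to-entry expansion, then use Corollary \ref{contal} to replace that Plucker coordinate by a grove measurement divided by $L$, and finally apply Lemma \ref{lemmal} and Theorem \ref{kenwil} to rewrite the resulting ratio in terms of entries of the response matrix.

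Claims 4 and 5 are immediate from Definition \ref{def:extendedmatr}: the columns of $A_k$ indexed by sources $I_k$ form the $(n-1) \times (n-1)$ identity submatrix, so $a_{j^k \to j^k} = 1$ and $a_{j^k \to m} = 0$ for every $m \in I_k \setminus \{j^k\}$. For a sink $m$, set $J_m := (I_k \setminus \{j^k\}) \cup \{m\}$, and let $s$ denote the number of elements of $I_k$ strictly between $j^k$ and $m$. The submatrix of $A_k$ on columns $J_m$ consists of $n-2$ standard basis vectors (from the source columns other than $j^k$) together with the column $m$; expanding its determinant along column $m$ gives $\Delta_{J_m}(A_k) = (-1)^s a_{j^k \to m}$, and Corollary \ref{contal} then yields
\[
a_{j^k \to m} = (-1)^s \frac{\Delta_{J_m}^M}{L}.
\]

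It remains to identify $\Delta_{J_m}^M$ via Lemma \ref{lemmal} in each case and apply Theorem \ref{kenwil}. For claim 1, $m = 2j-1$ with $j \neq k$, and $J_m$ consists of the odd index $2j-1$ together with the $n-2$ even indices avoiding $j_k$ and $j^k$ (the two even indices closest to $2k-1$); Lemma \ref{lemmal} identifies $\Delta_{J_m}^M = L_{jk}$, and Theorem \ref{kenwil} combined with the symmetry of the response matrix (Theorem \ref{respc}) gives $\Delta_{J_m}^M/L = -x_{kj}$, whence $a_{j^k \to m} = (-1)^{s+1} x_{kj}$. For claim 2, $m = 2k-1$, the set $J_m$ has the same even-index profile and the odd index $2k-1$, so Lemma \ref{lemmal} gives $\Delta_{J_m}^M = L_{kk} = \sum_{i \neq k} L_{ik}$; since the rows of $M_R(e)$ sum to zero (Theorem \ref{respc}), one obtains $L_{kk}/L = x_{kk}$, and since $j^k$ is adjacent to $2k-1$ on the boundary one has $s = 0$, giving $a_{j^k \to m} = x_{kk}$. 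For claim 3, $m = j_k$, the set $J_m$ is a set of $n-1$ even indices, so Lemma \ref{lemmal} gives $\Delta_{J_m}^M = L$ and hence $a_{j^k \to m} = (-1)^s$.

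The main technical obstacle is the sign bookkeeping -- reconciling the sign arising from the minor-to-entry expansion with the $(-1)^s$ built into Definition \ref{def:extendedmatr} and the minus sign in Theorem \ref{kenwil}. The clean cancellation between the first two (reflecting the design of Postnikov's sign convention, so that $\Delta_{J_m}(A_k)$ coincides with the unsigned path sum $M_{j^k m}$) is what makes the parity computations in claims 1--3 transparent; without it, each case would require a more delicate case-by-case sign analysis depending on where $m$ sits relative to $j^k$ in the cyclic order.
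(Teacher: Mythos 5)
Your proposal is correct and follows essentially the same route as the paper's own proof: translate each entry into the minor $\Delta_{(I_k\setminus\{j^k\})\cup\{m\}}(A_k)$, convert it via Corollary \ref{contal} to $\Delta^M/L$, and identify the result through Lemma \ref{lemmal} and Theorem \ref{kenwil}. Your version is in fact slightly more explicit than the paper's on two points the paper leaves implicit, namely that $s=0$ in case 2 because $j^k$ is adjacent to $2k-1$, and that $L_{kk}/L=x_{kk}$ uses the vanishing row sums of the response matrix.
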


\begin{figure}[h]
     \centering
     \includegraphics[scale=0.8]{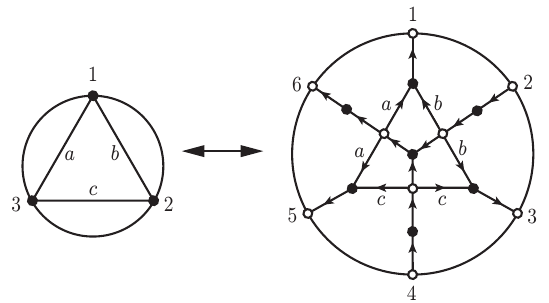}
     \caption{Example \ref{ex}}
\end{figure}

\begin{example}
\label{ex}
$$L_{1|2|3} = 1,\; L_{12|3}=b,\; L_{13|2}=a,\; L_{23|1} = c;$$
$$a_{21} = a+b,\; a_{22} = 1,\; a_{23}=b,\;a_{24} = 0,\; a_{25} = -a,\; a_{26}=-1;$$
$$x_{12} = \frac{-L_{12|3}}{L_{1|2|3}} = -b,\; x_{13} = \frac{-L_{13|2}}{L_{1|2|3}} = -a.$$
\end{example}

\begin{proof}

\textcolor{black}{1.} It follows from Definition \ref{matrpost} that
$$\Delta_{\{m\}\cup I_k -\{j^k\} }(A_k)=(-1)^{s} a_{j^k \to m}.$$
On the other hand Corollary \ref{contal} implies that:
$$\Delta_{\{m\}\cup I_k -\{j^k\} }(A_k)=\frac{\Delta_{\{m\}\cup I_k -\{j^k\} }^M}{L}.$$
Lemma \ref{lemmal} and Theorem  \ref{kenwil} now prove the claim:
$$\frac{\Delta_{\{m\}\cup I_k -\{j^k\} }^M}{L}=\frac{L_{\frac{m+1}{2}k}}{L}=-x_{\frac{m+1}{2}k}=-x_{k\frac{m+1}{2}}.$$

\textcolor{black}{2.} As it follows from Definition \ref{matrpost}
$$\Delta_{\{m\}\cup I_k -\{j^k\} }(A_k)=a_{j^k \to m}.$$
On the other hand Corollary \ref{contal} implies that
$$\Delta_{\{m\}\cup I_k -\{j^k\} }(A_k)=\frac{\Delta_{\{m\}\cup I_k -\{j^k\} }^M}{L}.$$
Lemma \ref{lemmal} and Theorem \ref{kenwil} now prove the claim:
$$\frac{\Delta_{\{m\}\cup I_k -\{j^k\} }^M}{L}=\frac{L_{kk}}{L}=x_{kk}.$$

\textcolor{black}{3.}  As it follows from Definition \ref{matrpost}
$$\Delta_{\{m\}\cup I_k -\{j^k\} }(A_k)=(-1)^{s}a_{j^k\to m}.$$
 On the other hand Corollary \ref{contal} implies that
$$\Delta_{\{m\}\cup I_k -\{j^k\} }(A_k)=\frac{\Delta_{\{m\}\cup I_k -\{j^k\} }^M}{L}.$$
Lemma \ref{lemmal} now proves the claim:
$$\frac{\Delta_{\{m\}\cup I_k -\{j^k\} }^M}{L}=\frac{L}{L}=1.$$

\textcolor{black}{4.} Since $m=j^k$ is the source the claim
immediately follows from Definition \ref{matrpost}.

\textcolor{black}{5.} Since $m \neq j^k$ is the source the claim 
immediately follows from Definition \ref{matrpost}.

\end{proof}

It remains to notice that by Theorem \ref{chanorient} all the vectors representing the rows of  the matrix $\Omega_n(e)$ belong to the same subspace defined by the extended boundary measurements matrix $A_k$ for any $k$.
Moreover by Corollary \ref{contal} this subspace defines the same point in $\mathrm{Gr}(n-1,2n)$  as the point defined by the bipartite network $N(\Gamma,\omega)$, and by Theorem  \ref{elcon} this point belongs to $\mathrm{Gr}(n-1, 2n)_{\geq 0}$. This ends the proof of Theorem \ref{maint}.

\subsection{Compactification}\label{compactification}
In this section we extend our parametrisation  of $E_n$ to its compactification introduced in \cite{L}. Let $\overline{E}_n$ be the set of electrical networks  the weights of some edges of $\Gamma$ are allowed to be equal to $0$ or $\infty$. 

One can give a combinatorial interpretation of 
zero and infinite values of conductivity. Indeed by the Kirchhoff law the zero conductivity of an edge means the deletion of this edge, the infinite conductivity of an edge means the contraction of this edge. 

The edges connecting two boundary vertices can have conductivity equal to $\infty$ as well. It leads us to the definition of a {\em cactus network}. 

\begin{definition}\label{def:comp}
Let $S$ be a circle with $n$ boundary points labeled by $1, \dots, n$. Let $\sigma\in \mathcal{NC}_n$, identifying the boundary points according to the parts of $\sigma$ gives a hollow cactus $S_\sigma$. It is a union of circles glued together at the identified points. The interior of $S_\sigma$, together with $S_\sigma$ itself is called a {\em cactus}. A {\em cactus network} is a weighted graph $\Gamma$ embedded into a cactus. In other words, we might think about  cactus networks as the networks which are obtained by contracting the set of edges between the boundary vertices  defined by $\sigma\in \mathcal{NC}_n$ which have infinite conductivity.
\end{definition}

Let  $e(\Gamma, \omega) \in \overline E_n$ have no edges with infinite conductivity which connect boundary vertices. After deleting and contracting all edges with the conductivities equal to zero and infinity respectively we obtain a new electrical network $e'(\Gamma', \omega')$. It is easy to see that for $e'$ the response matrix $M_R(e')$ is well-defined therefore we are able to use Theorem \ref{maint} to construct the matrix  $\Omega_n(e')$. However if a network $e$ contains  edges with infinite conductivities between boundary vertices we will not be able to construct the matrix $M_R(e)$ and therefore $\Omega_n(e')$. In order to resolve  this problem we have to slightly change the parametrisation defined by  Theorem \ref{maint}.
\begin{definition}
We let $R_{i,j}$ denote the \textit{effective electrical resistance} between nodes $i$ and $j$ i.e. the voltage at node $i$ which, when node $j$ is held at $0$ volts, causes a unit current to flow through the circuit from node $i$ to node $j$.
\end{definition}
\begin{theorem}\cite{KW} \label{kw-rep} Let $e(\Gamma, \omega)\in E_n$ be a planar circular electrical network on a connected graph $\Gamma$ and $M_R(e)$ is its response matrix. Denote by $M'_R(e)$ the matrix obtained from $M_R(e)$ by deleting the first row and the last column, then the following holds
 \begin{itemize}
 \item $M'_R(e)$ is invertible; 
 \item The matrix elements of its inverse are given by the formula
 \begin{equation}
M'_R(e)^{-1}_{ij}=\begin{cases}
   R_{in},\, \text{if}\,\, i=j   \\
   \frac{1}{2}(R_{in}+R_{jn}-R_{ij}),\, \text{if}\,\, i\not = j,\\
    
      \end{cases}
    \end{equation} 
\end{itemize}
where $R_{ij}$ is the effective resistance between the boundary points $i$ and $j$.
\end{theorem}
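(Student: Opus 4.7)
The plan is to split the statement into invertibility and an explicit formula for the inverse. For invertibility, I would use only the structural properties of $M_R(e)$ recorded in Theorem \ref{respc}: symmetry and vanishing of every row and column sum. Since $\Gamma$ is connected, these force $\operatorname{rank} M_R(e) = n-1$, with both the left and right kernels equal to $\mathbb{R}\mathbf{1}$. If $v \in \mathbb{R}^{n-1}$ satisfies $M'_R(e)\, v = 0$, I would extend $v$ to $\tilde v \in \mathbb{R}^n$ by appending a zero. The hypothesis says that rows $2,\ldots,n$ of $M_R(e)\tilde v$ vanish, and the column-sum-zero relation $\mathbf{1}^\top M_R(e) = 0$ then forces row $1$ to vanish as well. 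Hence $\tilde v \in \mathbb{R}\mathbf{1}$, and the appended zero forces $\tilde v = 0$.

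For the explicit form of the inverse, I would read off its $j$-th column $c = (M'_R(e))^{-1} e_j$ by the same extension trick: the vector $\tilde c \in \mathbb{R}^n$ with $\tilde c_n = 0$ satisfies $M_R(e)\tilde c = e_{j+1} - e_1$, since rows $2,\ldots,n$ of $M_R(e)\tilde c$ equal the coordinates of $e_j$ and the missing row-$1$ value is pinned down by the column-sum-zero constraint. Physically, $\tilde c$ is the boundary voltage distribution produced by injecting unit current at node $j+1$ and extracting it at node $1$, in the gauge $\tilde c_n = 0$. Thus $(M'_R(e))^{-1}_{ij}$ equals $V^{j+1,1}_i - V^{j+1,1}_n$, where $V^{a,b}$ denotes the voltage distribution corresponding to unit current from $a$ to $b$.

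To turn this into effective resistances, I would invoke the classical transfer-resistance identity
\[
V^{a,b}_c - V^{a,b}_d \;=\; \tfrac{1}{2}\bigl(R_{ad}+R_{bc}-R_{ac}-R_{bd}\bigr),
\]
which follows from the symmetry of the bilinear form $(I,I')\mapsto I^\top M_R(e)^{+} I'$ on the zero-sum subspace together with the defining relation $R_{ab} = V^{a,b}_a - V^{a,b}_b$. Specialising to $(a,b,c,d)=(j+1,1,i,n)$ yields the stated closed form for the off-diagonal entries, and on the diagonal the same expression collapses, via $R_{ii}=0$, to $R_{in}$. The main obstacle is purely technical bookkeeping: the asymmetric deletion (first row, last column) induces an index shift between rows and columns of $M'_R(e)$, so one must track gauges and signs carefully. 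The only step with genuine mathematical content, beyond the structural kernel argument, is the derivation of the transfer-resistance identity itself, which I would prove directly from the symmetry of the Moore--Penrose pseudoinverse rather than by any circular appeal to the very formula we are trying to establish.
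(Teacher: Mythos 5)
The paper does not actually prove this statement---it is quoted from \cite{KW}---so there is no internal proof to compare against; I will judge your argument on its own. Your overall strategy (the kernel argument for invertibility, reading the columns of the inverse as voltage distributions in a fixed gauge, and the four-point transfer-resistance identity derived from the symmetry of a generalized inverse of $M_R(e)$) is the standard route and is sound. The invertibility argument is correct, granted the standard fact that connectivity of $\Gamma$ forces $\ker M_R(e)=\mathbb{R}\mathbf{1}$, and the identity $V^{a,b}_c - V^{a,b}_d = \tfrac12\bigl(R_{ad}+R_{bc}-R_{ac}-R_{bd}\bigr)$ is correctly stated and correctly justified.

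The gap is in the final step, and it is not ``purely technical bookkeeping.'' For the matrix as literally defined (delete the first row and the last column), your own computation gives $M_R(e)\tilde c = e_{j+1}-e_1$ in the gauge $\tilde c_n=0$, so specializing the transfer identity to $(a,b,c,d)=(j+1,1,i,n)$ yields
\[
(M'_R(e))^{-1}_{ij}=\tfrac12\bigl(R_{(j+1)n}+R_{1i}-R_{(j+1)i}-R_{1n}\bigr),
\]
which is a genuinely different function of the resistances from the asserted $\tfrac12\bigl(R_{in}+R_{jn}-R_{ij}\bigr)$: the current-extraction node $1$ and the gauge node $n$ do not coincide, so the terms $R_{1i}$ and $R_{1n}$ survive and no index shift removes them; likewise the diagonal does not collapse to $R_{in}$. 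The asserted formula is exactly what your machinery produces for the \emph{symmetric} deletion (last row and last column): there $M_R(e)\tilde c = e_j - e_n$ with gauge $\tilde c_n=0$, and $(a,b,c,d)=(j,n,i,n)$ gives $\tfrac12\bigl(R_{in}+R_{jn}-R_{ij}\bigr)$ at once, with diagonal value $R_{in}$. (Note also that the theorem's displayed diagonal case, read literally as $R_{ii}$, equals $0$ and cannot be correct; the uniform formula gives $R_{in}$.) So either the statement must be read as the symmetric deletion---in which case rerunning your extension step proves it immediately---or, if the asymmetric deletion is really intended, the target formula itself has to change. As written, your proof establishes a correct formula for the matrix you defined and then asserts, incorrectly, that it coincides with the stated one.
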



Let $e(\Gamma, \omega)$ be as in Theorem \ref{kw-rep}.  Denote by $\Omega'_n(e)$ the matrix obtained from $\Omega_n(e)$ by deleting the last row. Note it has the same row space as $\Omega_n(e)$. Assign to $\Omega'_n(e)$ the following matrix
\bea\label{eq:compact}
M'_R(e)^{-1}D_{n-1}\Omega'_n(e),
\eea
where $D_{n-1}$ is a diagonal $n-1 \times n-1$ matrix with the entries $d_{ii}=(-1)^{i+1}.$  The matrix \eqref{eq:compact} and $\Omega_n(e)$ define the same point of $\mathrm{Gr}(n-1,2n).$

Using \ref{kw-rep} and a particular form of \eqref{eq:compact} we conclude:
\begin{corollary} \label{norm}  The matrix entries of \eqref{eq:compact} are linear combinations of $R_{ij}$'s.
\end{corollary}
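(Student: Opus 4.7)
The goal is to show that every entry of $M'_R(e)^{-1}D_{n-1}\Omega'_n(e)$ is a linear combination of the effective resistances $R_{ij}$. The plan rests on two ingredients already in place: Theorem \ref{kw-rep}, which expresses each entry of $M'_R(e)^{-1}$ as a linear combination of $R_{ij}$'s, together with the explicit structure of $\Omega_n(e)$, whose entries in the even-indexed columns lie in $\{0,\pm 1\}$ (coming from the cyclic band $B$-block) while the entries in the odd-indexed columns are signed response-matrix entries $(-1)^{i+k}x_{ik}$.

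I would split the computation by column type. For the even-indexed columns of $D_{n-1}\Omega'_n(e)$, the entries remain in $\{0,\pm 1\}$ with at most two non-zero slots per column because of the cyclic band pattern, so left-multiplying by $M'_R(e)^{-1}$ produces a signed sum of at most two entries of $M'_R(e)^{-1}$; by Theorem \ref{kw-rep} this is a linear combination of $R_{ij}$'s. For the odd-indexed columns, the $k$-th such column of $D_{n-1}\Omega'_n(e)$ is, up to the overall sign $(-1)^{k+1}$, the truncated $k$-th column of $M_R(e)$ viewed as a vector in $\mathbb{R}^{n-1}$. Now $M'_R(e)$ is by construction an $(n-1)\times(n-1)$ submatrix of $M_R(e)$, so the identity $M'_R(e)M'_R(e)^{-1}=I$ says that $M'_R(e)^{-1}$ applied to a truncated column of $M_R(e)$ produces either a standard basis vector (whose entries are in $\{0,\pm 1\}$) or, after a single index shift, such a vector corrected by a column of $M'_R(e)^{-1}$. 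The correction arises from the row and column zero sums of $M_R(e)$ (Theorem \ref{respc}), which allow the missing row or column to be expressed as a signed sum of the remaining ones. In every subcase Theorem \ref{kw-rep} then yields the claim.

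The main obstacle is the bookkeeping for the odd-column case: one must track how deleting the first row and last column of $M_R(e)$ shifts the indexing, how the alternating signs in $\Omega_n(e)$ interact with those of $D_{n-1}$, and verify that all the signs conspire so as to fit the pattern of $M'_R(e)M'_R(e)^{-1}=I$ applied to truncated columns of $M_R(e)$. This is precisely the ``particular form'' of \eqref{eq:compact} alluded to in the statement: the signs in $D_{n-1}$ are arranged exactly so that each cross term $(M'_R(e)^{-1})_{im}x_{mk}$ appears with the right sign for the identity to force the inner sum to collapse either to a constant or to a single entry of $M'_R(e)^{-1}$, both of which are linear combinations of $R_{ij}$'s by Theorem \ref{kw-rep}.
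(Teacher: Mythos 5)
Your overall strategy coincides with the paper's: split \eqref{eq:compact} by column parity, dispose of the even-indexed columns using the fact that the corresponding columns of $D_{n-1}\Omega'_n(e)$ have entries in $\{0,\pm 1\}$ (with at most two nonzero slots) together with Theorem \ref{kw-rep}, and handle the odd-indexed columns by playing the truncated columns of $M_R(e)$ off against $M'_R(e)^{-1}$. The even-column half of your argument is complete and correct, and it is the only place where Theorem \ref{kw-rep} is genuinely needed.

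The odd-column half, however, is where the content of the proof lies, and you have deferred it rather than closed it. The fallback you offer --- that $M'_R(e)^{-1}$ applied to a truncated column of $M_R(e)$ gives ``a standard basis vector, or, after a single index shift, such a vector corrected by a column of $M'_R(e)^{-1}$'' --- does not describe what happens, and in the shifted subcase it would not rescue the claim anyway. If a truncated column $w_j=(x_{1j},\dots,x_{n-1,j})^T$ really differed from a column $c_j$ of $M'_R(e)$ by an index shift, then $w_j=Uc_j$ for a constant matrix $U$ encoding the shift together with the zero-sum relation, and $M'_R(e)^{-1}w_j=M'_R(e)^{-1}U\,M'_R(e)e_j$ involves \emph{all} entries of $M'_R(e)^{-1}$ weighted by entries of $M_R(e)$; already for $n=3$ a direct computation shows such an expression is a ratio of quadratics in the conductances which is not a linear (or even affine) combination of the $R_{ij}$. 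What actually makes the argument work, and what the paper uses very tersely, is that the conventions are arranged so that for $j=1,\dots,n-1$ the $(2j-1)$-st column of $D_{n-1}\Omega'_n(e)$ is exactly $(-1)^{j+1}$ times the $j$-th column of $M'_R(e)$ --- no shift, no correction --- so that $M'_R(e)^{-1}$ sends it to $\pm e_j$, a constant vector; the one remaining odd column, indexed $2n-1$, is by the zero row sums of Theorem \ref{respc} a constant-coefficient combination of the other odd columns and hence also maps to a constant vector. You name all the right ingredients ($M'_R(e)^{-1}M'_R(e)=I$, the zero row and column sums, Theorem \ref{kw-rep}), but the verification you label ``bookkeeping'' is the proof, and the escape route you sketch for the case where the bookkeeping does not come out clean is not a valid one.
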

\begin{proof}
 For all columns with even indexes of the matrix $M'_R(e)^{-1}D_{n-1}\Omega'_n(e)$  the proposition is obvious, because the columns with even indexes of the matrix $D_{n-1}\Omega'_n(e)$ contain only entries equal to $0, 1$ or $-1.$ 
 
For all the columns except the one indexed by $2n-1$ the statement follows from Theorem \ref{kw-rep}. To finish the proof notice  that  due to Theorem \ref{respc} the column of $D_{n-1}\Omega'_n(e)$ indexed by $2n-1$ is a linear combination of other odd columns with constant coefficients.

\end{proof}

If the boundary  vertices $i$ and $j$ are connected by an edge with infinite conductivity, then $R_{ij}=0$, thus
we obtain the algorithm for finding the parametrization of the points of $\mathrm{Gr}(n-1,2n)$ associated with the cactus networks given below.

As it is explained in Definition \ref{def:comp} an arbitrary cactus network $\overline{e}$ is obtained from a network $e(\Gamma, \omega)$ by contracting some edges $e_{ij}$ between the boundary vertices which have the infinite conductivity. Produce a new network $e_{aux}$ by changing the conductivities of $e_{ij}$ to finite quantities $c_{ij}$. For simplicity we assume at first that $\Gamma$ is connected. The matrix $\Omega'_n(e_{aux}) $ defines the same point of $\mathrm{Gr}(n-1,2n)$ as
$$M'_R(e_{aux})^{-1}D_{n-1}\Omega'_n(e_{aux}).$$
Passing to the limit when $R_{ij} \to 0$ is equivalent to passing to the limit when $c_{ij}\to \infty$ therefore we conclude that the cactus network $\overline{e}$ defines the following point of $\mathrm{Gr}(n-1,2n)$:

\begin{equation}
   X(\overline{e})=\lim_{c_{ij}\to \infty}\left(M'_R(e_{aux})^{-1}D_{n-1}\Omega'_n(e_{aux})\right)=\lim_{R_{ij}\to 0}\left(M'_R(e_{aux})^{-1}D_{n-1}\Omega'_n(e_{aux})\right).
\end{equation}

\begin{remark}
For a cactus network $\overline{e}$ which is obtained from a network $e(\Gamma, \omega)\in E_n$ with $\Gamma$
being not connected we have to apply the algorithm to each connected component.
  
\end{remark}
\section{Lagrangian Grassmannian}
\label{sec:Lagr}
Recall a {\it symplectic vector space} is a vector space equipped with a symplectic bilenear form.
An {\it isotropic subspace} of a symplectic vector space is a vector subspace on which the symplectic form vanishes.
A maximal isotropic subspace is called  {\it Lagrangian} subspace.
For $V$ a symplectic vector space, the {\it Lagrangian Grassmannian} $\mathrm{LG}(n,V)$ is the space of lagrangian subspaces. If it is clear which symplectic space we are working with we will denote the Lagrangian Grassmannian $\mathrm{LG}(n)$.
 
In this section we obtain an important consequence of our parametrisation of the set $E_n$, namely we will prove in Theorem \ref{laggr} that the matrix $\Omega_n(e)$ defines a point in the Lagrangian Grassmannian $\mathrm{LG}(n-1)$. 

\begin{definition} 
Define the subspace
\begin{equation*}
V:=\{v\in \Bbb C^{2n}| \sum_{i=1}^n(-1)^i v_{2i}=0\,, \sum_{i=1}^n (-1)^iv_{2i-1}=0\}.    
\end{equation*}
\end{definition}

\begin{lemma}
All the rows of the matrix  $\Omega_n(e)$ belong to the subspace $V$.
\end{lemma}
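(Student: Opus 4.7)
The plan is to verify both linear constraints defining $V$ directly on each row of $\Omega_n(e)$, treating the even-indexed and odd-indexed sums separately.

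First I would record the explicit form of the $k$-th row of $\Omega_n(e)$. Reading off the displayed matrix (and cross-checking with the $E_4$ example), the odd-column entry in position $2j-1$ is $(-1)^{k+j}x_{kj}$; the even-column entries form a simple ``two consecutive ones'' pattern, namely for $2 \le k \le n$ only positions $2k-2$ and $2k$ are nonzero and both equal $1$, while for $k=1$ the nonzero even entries are $v_2=1$ and $v_{2n}=(-1)^n$. Getting this explicit description is really the only bookkeeping step; once it is on the page the two checks are immediate.

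Next I would verify the even-index relation $\sum_{i=1}^n (-1)^i v_{2i}=0$. For $k\ge 2$ the nonzero even entries of the $k$-th row contribute $(-1)^{k-1}\cdot 1 + (-1)^k\cdot 1 = 0$. For $k=1$ the contribution is $(-1)^1\cdot 1 + (-1)^n\cdot (-1)^n = -1+1 = 0$. Thus the second defining condition of $V$ is built into the combinatorial pattern of the $0/1$ entries and requires no electrical input.

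Finally I would verify the odd-index relation $\sum_{i=1}^n (-1)^i v_{2i-1}=0$. Substituting the formula for the odd entries of the $k$-th row gives
\begin{equation*}
\sum_{i=1}^n (-1)^i\,(-1)^{k+i}x_{ki} \;=\; (-1)^k\sum_{i=1}^n x_{ki}.
\end{equation*}
By Theorem \ref{respc}, each row of the response matrix $M_R(e)$ sums to zero, so $\sum_{i=1}^n x_{ki}=0$ and the expression vanishes. This is the only place where a nontrivial property of electrical networks is used, and it is essentially the Kirchhoff-law constraint repackaged.

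No step looks like a serious obstacle: the statement is really a direct check. The only thing to be careful about is the slightly asymmetric first row (with the $(-1)^n$ wrap-around entry in column $2n$), which is why I would treat $k=1$ as a separate case in the even-index verification. Once the explicit form of the rows is written down correctly, both conditions drop out in one line each.
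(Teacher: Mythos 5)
Your proof is correct and follows essentially the same route as the paper, which simply cites the zero row-sum property of $M_R(e)$ from Theorem \ref{respc} and leaves the rest implicit. Your version just spells out the bookkeeping (the explicit row entries, the cancellation of the alternating even-index sum, and the special case $k=1$) that the paper's one-line proof omits.
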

\begin{proof}
The proof follows from the third property of the matrix $M_R(e)$ given in Theorem \ref{respc}.
\end{proof}

Fix a basis for the subspace $V$:
\begin{multline}
\label{V:basis}
w_1=(1, 0, 1, 0,  \dots, 0, 0, 0),\; w_2=(0, 1, 0, 1, \dots, 0, 0, 0),\dots,\\
w_{2n-2}=(0, 0, 0, 0, \dots, 1, 0, 1).
\end{multline}

Expanding the rows of the matrix $\Omega_n(e)$ in this basis we obtain the main object of this section -- the matrix $\widetilde{\Omega}_n(e)$ defined as
\begin{equation*}\label{expansion}
\widetilde{\Omega}_n(e)=\Omega_n(e)B_n^{-1},   
\end{equation*}
here  $B_n$ is the matrix whose rows are the vectors $w_i$ and $B_n^{-1}$ is its right inverse:
\begin{equation}
\label{eq:Binverse}
B_n^{-1} = \left(
\begin{array}{cccccc}
1& 0  & -1  & 0 & \dots&0  \\
0& 1  & 0  & -1 & \dots&(-1)^{n+1}\\
0& 0  & 1  & 0 & \dots&0\\
0&0&0&1&\dots&(-1)^n\\
\vdots&\vdots&\vdots&\vdots&\ddots&\vdots\\
0& 0  & 0  & 0 & \dots&1\\
0& 0  & 0  & 0 & \dots&0\\
0& 0  & 0  & 0 & \dots&0
\end{array}
\right).
\end{equation} 

\begin{example} \label{exlagr}
For  $e\in E_3$,  $\widetilde{\Omega}_3(e)$ has the following form: 
 \begin{equation*}
\widetilde{\Omega}_3(e) = \left(
\begin{array}{ccccc}
x_{11}& 1  & x_{13}  & -1 & \\
-x_{21}& 1  & -x_{23}  & 0 & \\
x_{31}& 0  & -x_{31}-x_{32}  & 1 &  \\
\end{array}
\right).
\end{equation*} 

For $e\in E_4$, $\widetilde{\Omega}_4(e)$ has the following form: 
 \begin{equation*}
\widetilde{\Omega}_4(e) = \left(
\begin{array}{ccccccc}
x_{11}& 1  & x_{13}+x_{14}  & -1 & -x_{14} & 1 & \\
-x_{21}& 1  & -x_{23}-x_{24}  & 0 & x_{24} & 0 & \\
x_{31}& 0  & -x_{31}-x_{32}  & 1 & -x_{34} & 0 &  \\
-x_{41}& 0  & x_{41}+x_{42}  & 0 & -x_{41}-x_{42}-x_{43} & 1 &  \\
\end{array}
\right),
\end{equation*} 

\end{example}
Now we are ready to formulate the main result of this section. Let $\Lambda_{2n-2}$ be the following symplectic form on the space $\Bbb C^{2n-2}$
\begin{equation} \label{def:lambda}
\Lambda_{2n-2} = 
\left(
\begin{array}{cccccccccc}
0& 1  & 0  & 0    &\ldots  & 0& \\
-1& 0  &-1  & 0   &\ldots&   0& \\
0& 1  & 0  & 1  &\ldots&   0& \\
\vdots & \vdots & \ddots & \vdots  & \ddots &   \vdots & \\
\end{array}
\right),
\end{equation} 
then the following theorem holds:
\begin{theorem} \label{laggr} For $e\in E_n$ the matrix $\widetilde{\Omega}_n(e)$ defines a point of $\mathrm{LG}(n-1,V).$ 
In other words the following identity holds:
\bea
\widetilde{\Omega}_n(e)\Lambda_{2n-2}\widetilde{\Omega}_n(e)^T=0.
\eea
\end{theorem}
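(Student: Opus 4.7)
I will show that each pair of rows of $\widetilde\Omega_n(e)$ is $\Lambda_{2n-2}$-orthogonal by pulling the form back to an antisymmetric bilinear form $\omega^\ast$ on the ambient $\mathbb R^{2n}$; the theorem then reduces to $r_k\,\omega^\ast\,r_l^{T}=0$ for every pair of rows $r_k,r_l$ of $\Omega_n(e)$ (which lie in $V$). Only two facts about the response matrix will enter: its symmetry $x_{ij}=x_{ji}$ and the vanishing row sums $\sum_j x_{kj}=0$ (Theorem \ref{respc}).

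\textbf{Step 1: the pulled-back form.} From the basis $\{w_i\}$ of $V$ one has $\widetilde v_{2m-1}+\widetilde v_{2m-3}=v_{2m-1}$ and $\widetilde v_{2m}+\widetilde v_{2m-2}=v_{2m}$ for $m\ge 2$, together with $\widetilde v_1=v_1$, $\widetilde v_2=v_2$. Substituting into the tridiagonal antisymmetric form $\Lambda_{2n-2}$ (entries $\Lambda_{i,i+1}=(-1)^{i+1}$) and regrouping adjacent terms, the even-index parts of $\widetilde v$ telescope into the full coordinates $v_{2m}$, while the odd-index parts remain the partial alternating sums $\widetilde v_{2m-1}=\sum_{j=1}^{m}(-1)^{m-j}v_{2j-1}$. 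The telescoping then yields
\[
\omega^\ast(v,v')=\sum_{1\le j\le m\le n-1}(-1)^{m-j}\bigl(v_{2j-1}v'_{2m}-v_{2m}v'_{2j-1}\bigr),
\]
which is manifestly antisymmetric on $\mathbb R^{2n}$ and agrees with $\Lambda_{2n-2}$ on $V$.

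\textbf{Step 2: evaluation on rows of $\Omega_n(e)$.} The odd entries of row $k$ are $r_{k,2j-1}=(-1)^{k+j}x_{kj}$; in the range $m\in\{1,\ldots,n-1\}$ used by $\omega^\ast$ the even entries are sparse, namely $r_{k,2m}=\delta_{m,k-1}+\delta_{m,k}$ for $2\le k\le n-1$, and only one Kronecker delta survives for the boundary rows $k=1$ and $k=n$. Plugging into $\omega^\ast(r_k,r_l)$, the outer sum over $m$ contributes at most four terms, and the identity $\sum_{j=1}^{l}x_{kj}-\sum_{j=1}^{l-1}x_{kj}=x_{kl}$ collapses the whole expression to
\[
\omega^\ast(r_k,r_l)=(-1)^{k+l}\bigl(x_{kl}-x_{lk}\bigr),
\]
which is $0$ by symmetry of $M_R(e)$. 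The vanishing row-sum identity is what makes the boundary cases $k=1$ or $l=n$ fit into the same formula: it converts the truncated partial sum $\sum_{j=1}^{n-1}x_{kj}$ into $-x_{kn}$, absorbing the $m=n$ term that $\omega^\ast$ cannot see. The only non-mechanical step is the telescoping in Step 1; Step 2 is routine bookkeeping, with the four interior/boundary sub-cases all reducing to the same closed expression $(-1)^{k+l}(x_{kl}-x_{lk})$.
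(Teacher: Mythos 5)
Your proof is correct: I checked the pulled-back form $\omega^\ast$ against $\Lambda_{2n-2}$ on $V$ and the evaluation $\omega^\ast(r_k,r_l)=(-1)^{k+l}(x_{kl}-x_{lk})$ on small cases (including the boundary rows with the stray $(-1)^n$ entry), and everything closes up using exactly the two inputs you isolate, symmetry of $M_R(e)$ and the vanishing row sums. The underlying mechanism is the same as the paper's — both arguments reduce the isotropy of the row space to $x_{kl}=x_{lk}$ — but the route is genuinely different. The paper does not work in the interleaved coordinates at all: it conjugates $\Omega_n(e)$ by a signed permutation $\overline{T}_{2n}$ (separating odd and even positions into two blocks) into an auxiliary matrix $\Omega^{aux}(e)$ whose rows are $(e_i-e_{i+1},\,x_{i1},\ldots,x_{in})$, and introduces a block symplectic form $\lambda_{2n}=\left(\begin{smallmatrix}0 & g\\ -g^T & 0\end{smallmatrix}\right)$ with $g$ lower unitriangular of ones; in those coordinates the product $\Omega^{aux}\lambda_{2n}(\Omega^{aux})^t$ is visibly $M_R-M_R^T$ with no telescoping, and the theorem follows from the matrix identity $B_n\overline{T}_{2n}\lambda_{2n}\overline{T}_{2n}^tB_n^t=\Lambda_{2n-2}$. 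What the paper's detour buys is reuse: $\lambda_{2n}$, $V'$ and $\Omega^{aux}$ are exactly the objects needed in Section \ref{Sec:vertex} to identify the Lam embedding with the vertex-model embedding of \cite{GT}. What your version buys is self-containedness — no auxiliary form, no signed permutation, and the verification stays entirely inside the data of Theorem \ref{laggr}; the price is the four-subcase bookkeeping in Step 2, which you correctly flag as routine.
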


We will prove some auxiliary statements first which will be also useful when we will treat the vertex representation of electrical network.

Let $\Bbb R^{2n}$ be a vector space equipped with the symplectic form 
\bea
\lambda_{2n}=\left(
\begin{array}{cc}
0 & g\\
-g^T & 0
\end{array}
\right);
\eea
where
\bea
g=\left(
\begin{array}{ccccc}
1 & 0 & 0 & \cdots & 0\\
1 & 1 & 0 & \cdots & 0\\
1 & 1 & 1 & \cdots & 0\\
\vdots & \vdots & \vdots & \ddots & \vdots\\
1 & 1 & 1 & \cdots & 1
\end{array}
\right).
\eea

Let $V'\subset \Bbb R^{2n}$ of dimension $2n-2$ defined as follows
$$V':=\{v\in \Bbb R^{2n}| \sum_{i=1}^n v_{i}=0\;\; \text{and}\, \sum_{i=n+1}^{2n} v_{i}=0\}$$

Then $2n-2$ vectors 
\bea
\{e_1-e_2,e_2-e_3,\ldots ,e_{n+1}-e_{n+2},\ldots ,e_{2n-1}-e_{2n}\}\label{basis-V'}
\eea
form a basis for $V'$ and together with $e_{1},e_{n+1}$ they form a basis of $\Bbb R^{2n}$. 

\begin{lemma} 
The restriction of $\lambda_{2n}$ to $V'$ in the basis \eqref{basis-V'} of $V'$  takes the form:
\bea
\left(
\begin{array}{cc}
0 & -h^T\\
h & 0
\end{array}
\right);
\eea
where
\bea
h=\left(
\begin{array}{ccccc}
-1 & 0 & 0 & \cdots & 0\\
1 &- 1 & 0 & \cdots & 0\\
0 & 1 & -1 & \cdots & 0\\
\vdots & \vdots & \vdots & \ddots & \vdots\\
0 & 0 & \cdots & 1&-1
\end{array}
\right)
\eea
and is \textbf{non-degenerate}.
\end{lemma}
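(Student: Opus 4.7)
The plan is a direct computation of the Gram matrix of $\lambda_{2n}|_{V'}$ in the given basis. Split the basis into the ``first-half'' vectors $a_i := e_i - e_{i+1}$ for $i = 1, \ldots, n-1$, which are supported on coordinates $1, \ldots, n$, and the ``second-half'' vectors $b_j := e_{n+j} - e_{n+j+1}$ for $j = 1, \ldots, n-1$, supported on coordinates $n+1, \ldots, 2n$. The block off-diagonal structure of $\lambda_{2n}$ (with only $g$ pairing the first $n$ coordinates against the last $n$) immediately gives $\lambda_{2n}(a_i, a_{i'}) = 0$ and $\lambda_{2n}(b_j, b_{j'}) = 0$, which accounts for the two zero diagonal blocks in the claimed matrix.

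The heart of the computation is the cross-pairing $\lambda_{2n}(a_i, b_j) = (e_i - e_{i+1})^T g\,(e_j - e_{j+1})$, where here $e_i, e_j$ denote the standard basis vectors in $\mathbb{R}^n$. The key observation is that $(e_i - e_{i+1})^T g$ is the $i$th row of $g$ minus the $(i+1)$st row, and since the $i$th row of $g$ consists of $i$ ones followed by zeros, this difference equals $-e_{i+1}^T$. Consequently
\[
\lambda_{2n}(a_i, b_j) \;=\; -e_{i+1}^T(e_j - e_{j+1}) \;=\; \delta_{ij} - \delta_{j,\, i+1},
\]
yielding the upper-bidiagonal $(n-1) \times (n-1)$ matrix with $+1$ on the diagonal and $-1$ on the superdiagonal. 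This is exactly $-h^T$ for the $h$ in the statement, and the bottom-left block is then $h$ by the antisymmetry of $\lambda_{2n}$.

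For non-degeneracy, observe that $h$ is lower triangular with $-1$'s on the diagonal, so $\det h = (-1)^{n-1} \neq 0$. Since the restricted Gram matrix is block-antidiagonal with blocks $h$ and $-h^T$, its determinant is $\pm(\det h)^2 \neq 0$, and the form is non-degenerate.

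I do not foresee any real obstacle here: the whole lemma collapses to the one-line identity $(e_i - e_{i+1})^T g = -e_{i+1}^T$, which is immediate from the lower-triangular all-ones shape of $g$. The only items that demand care are the sign conventions and the ordering chosen within the basis, so that the top-right block comes out as $-h^T$ rather than $+h^T$ or $h$.
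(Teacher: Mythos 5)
Your computation is correct and is exactly the direct verification the paper intends (the lemma is stated there without proof): the identity $(e_i-e_{i+1})^T g = -e_{i+1}^T$ gives the cross-block $\delta_{ij}-\delta_{j,i+1}=-h^T$, the zero diagonal blocks follow from the block-antidiagonal shape of $\lambda_{2n}$, and non-degeneracy follows from $\det h=(-1)^{n-1}\neq 0$. No gaps.
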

Let $M_R(e)=(x_{ij})$ be the response matrix for $e\in E_n$.  Denote by $\Omega^{aux}(e)$ the subspace of $V'$ of dimension $n-1$ spanned by the $n$ vectors 
$$(1,0\ldots ,0,-1,x_{11},\ldots ,x_{1n})$$
$$(-1,1,0\ldots ,0,x_{21},\ldots ,x_{2n})$$
$$(0,-1,1,0\ldots ,0,x_{31},\ldots ,x_{3n})$$
$$\ldots$$
$$(0,\ldots ,0-1,1,x_{n1},\ldots ,x_{nn}).$$
We denote by the same symbol $\Omega^{aux}(e)$ the matrix with these vectors as rows.

\begin{lemma} \label{vertex model} The restriction of $\lambda_{2n}$ to the row space of $\Omega^{aux}(e)$ is zero for any value of the parameters therefore this subspace is a point in $\mathrm{LG}(n-1,V')$. 
\end{lemma}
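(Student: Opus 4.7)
The plan is to compute $\Omega^{aux}(e)\,\lambda_{2n}\,\Omega^{aux}(e)^T$ directly and show that every entry vanishes, invoking only the symmetry of the response matrix $M_R(e)$ and the vanishing of its row and column sums. I would first check that each row of $\Omega^{aux}(e)$ lies in $V'$: the first $n$ entries contain a $+1$ and a $-1$ and hence sum to $0$, while the last $n$ entries form the $i$-th row of $M_R(e)$, whose sum is $0$ by Theorem \ref{respc}. Consequently the restriction of $\lambda_{2n}$ to the row space agrees with the restriction of the (non-degenerate) form on $V'$ provided by the previous lemma, so showing isotropy with respect to $\lambda_{2n}$ is enough.

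The heart of the proof is the following direct computation. Write row $i$ as $r_i=(a_i,x_i)$ with $a_i=e_i-e_{i-1}$ for $i\geq 2$, $a_1=e_1-e_n$, and $x_i=(x_{i1},\dots,x_{in})$. The block structure of $\lambda_{2n}$ gives
\begin{equation*}
r_i\,\lambda_{2n}\,r_j^T \;=\; a_i\, g\, x_j^T \;-\; x_i\, g^T\, a_j^T.
\end{equation*}
The key observation is that $g$ acts as the operator of partial sums: $(g\,x^T)_k=x_1+\cdots+x_k$. Applied to the telescoping difference $a_i=e_i-e_{i-1}$ with $i\geq 2$, it collapses everything to the single entry $x_{ji}$; for $i=1$ the cyclic wrap $e_1-e_n$ produces $x_{j1}-\sum_k x_{jk}$, which also equals $x_{j1}$ by the row-sum vanishing. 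Symmetrically one finds $x_i\, g^T\, a_j^T=x_{ij}$, so
\begin{equation*}
r_i\,\lambda_{2n}\,r_j^T \;=\; x_{ji}-x_{ij} \;=\; 0
\end{equation*}
by symmetry of $M_R(e)$ (Theorem \ref{respc}).

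Finally I would confirm that the row space has dimension exactly $n-1$, so that it is a genuine point of $\mathrm{LG}(n-1,V')$. The $n$ rows admit one obvious linear dependence -- they sum to zero, because the $a_i$ telescope cyclically and the columns of $M_R(e)$ have zero sum -- while the first $n-1$ rows are already independent since their projections onto the first $n-1$ columns form a lower triangular matrix with $\pm 1$ on the diagonal. I expect the main conceptual obstacle to be handling the boundary case $i=1$ uniformly: the wrap-around pattern $(n,1)$ breaks the local telescoping, and one has to invoke the row- and column-sum vanishing of $M_R(e)$ to absorb the discrepancy. Everywhere else the argument is just a clean telescoping identity combined with the symmetry of the response matrix.
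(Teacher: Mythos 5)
Your proof is correct and follows essentially the same route as the paper: the paper also computes $\Omega^{aux}(e)\lambda_{2n}\Omega^{aux}(e)^T$ directly, uses the vanishing row sums of $M_R(e)$ to reduce the $(i,j)$ entry to $x_{ji}-x_{ij}$, and concludes by symmetry of the response matrix. Your write-up merely makes explicit the telescoping/partial-sum mechanism behind that reduction (and adds the dimension count), which the paper leaves implicit.
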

\begin{proof}
Indeed the sum of the elements in each row of the response matrix $M_R(e)$ is equal to zero, therefore: 
\begin{equation*}
\Omega^{aux}(e)\lambda_{2n}\Omega^{aux}(e)^t=
\left(\begin{array}{ccccc}
0 & x_{21}-x_{12} & x_{31}-x_{13} &  \cdots & x_{n1}-x_{1n}\\
x_{12}-x_{21}& 0 & x_{32}-x_{23}  & \cdots & x_{n2}-x_{2n}\\
x_{23}-x_{32}& x_{34}-x_{43} & 0 &   \cdots & x_{n3}-x_{3n}\\
\vdots & \vdots & \vdots & \ddots & \vdots\\
\end{array}
\right).
\end{equation*}
And $M_R(e)$ is a symmetric matrix, so we obtain $\Omega^{aux}(e)\lambda_{2n}\Omega^{aux}(e)^t=0$.
\end{proof}

\begin{lemma}\label{vertex model 2}
\begin{equation}
    \Omega_n^{aux}(e)=D_{n}\Omega_n(e)\overline{T}_{2n},
\end{equation}

Where $\overline{T}_{2n}$ is a $2n\times 2n$ signed permutation matrix corresponding to the permutation in one-row notation
\[(2,-4,6,\ldots,(-1)^{n+1}2n,1,-3,5,\ldots,(-1)^{n+1}(2n-1)),\]
or as a matrix:
\begin{equation*}
\overline{T}_{2n}=\left(
\begin{array}{cccccccc}
0 & 0 & 0 &\cdots & 1 & 0 &0 &\cdots\\
1 & 0 & 0 & \cdots & 0&0 & 0 & \cdots \\
0 & 0 & 0 & \cdots & 0 & -1 & 0 &\cdots \\
0 & -1 & 0 & \cdots & 0 & 0 & 0 &\cdots \\
0 & 0 & 0 & \cdots & 0 & 0 & 1 &\cdots \\
0 & 0 & 1 & \cdots & 0 & 0 &0  &\cdots \\
\vdots &  \vdots & \vdots & \ddots & \vdots & \vdots & \vdots
\end{array}
\right).
\end{equation*}
 
\end{lemma}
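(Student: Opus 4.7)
The plan is to verify the matrix identity entry-by-entry, exploiting the rigid structure of $\Omega_n(e)$: its odd columns carry the response-matrix entries with alternating signs, while its even columns carry a sparse pattern of $\pm 1$'s. First I would record explicit formulas: the $(i, 2j-1)$-entry of $\Omega_n(e)$ is $(-1)^{i+j}\,x_{ij}$, while its $(i, 2j)$-entry equals $1$ when $j\in\{i-1, i\}$ for $i\geq 2$, equals $1$ at $(i,j)=(1,1)$, equals $(-1)^n$ at $(i,j)=(1,n)$, and vanishes otherwise.

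Next I would decode $\overline{T}_{2n}$ from its row-notation description (and cross-check against the displayed matrix form). Reading off the signed-permutation rule, right-multiplication by $\overline{T}_{2n}$ sends column $2j$ of $\Omega_n(e)$ (rescaled by $(-1)^{j+1}$) to column $j$ of the product, and column $2j-1$ of $\Omega_n(e)$ (again rescaled by $(-1)^{j+1}$) to column $n+j$. Multiplication by $D_n$ on the left then rescales row $i$ by $(-1)^{i+1}$.

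Assembling these observations, the $(i, n+j)$-entry of $D_n\Omega_n(e)\overline{T}_{2n}$ equals
\[
(-1)^{i+1}(-1)^{j+1}(-1)^{i+j}x_{ij} = x_{ij},
\]
which exactly matches the right half of $\Omega_n^{aux}(e)$. The $(i, j)$-entry for $j\le n$ becomes $(-1)^{i+j}$ times the $(i, 2j)$-entry of $\Omega_n(e)$, and for $i\geq 2$ this produces $+1$ at $j=i$ and $-1$ at $j=i-1$, with zeros elsewhere --- precisely the sub- and main-diagonal pattern in the left half of $\Omega_n^{aux}(e)$.

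The subtle step, which I expect to require the most bookkeeping, is the wrap-around in row $1$: the factor $(-1)^n$ carried by the $(1, 2n)$-entry of $\Omega_n(e)$ must conspire with the permutation sign $(-1)^{n+1}$ from column $n$ of $\overline{T}_{2n}$ and the row-scaling $(-1)^{1+1}=1$ to yield exactly $-1$ at position $(1, n)$ of the product, matching the $-1$ appearing at the end of the first row's ``left half'' in $\Omega_n^{aux}(e)$. Since $(-1)^n\cdot(-1)^{n+1}\cdot 1 = -1$, this works out. With this single sign-check performed, every remaining left-half entry is zero automatically, and the matrix identity is established.
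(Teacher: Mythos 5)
Your verification is correct in every detail, including the sign conventions for the signed permutation (column $k$ of $\overline{T}_{2n}$ selecting $\mathrm{sign}(\sigma(k))$ times column $|\sigma(k)|$ of $\Omega_n(e)$) and the wrap-around check $(-1)^n\cdot(-1)^{n+1}=-1$ in row $1$. The paper simply asserts that the identity "follows immediately from the structure of the matrix $\Omega_n(e)$," so your entry-by-entry computation is exactly the argument the authors leave implicit.
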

\begin{proof}
The proof follows immediately from the structure of the matrix $\Omega_n(e)$. 
\end{proof}
Putting together the facts proved above we obtain

\[0 = \Omega_n^{aux}(e)\lambda_{2n}(\Omega_n^{aux}(e))^t=D_{n}\Omega_n(e)\overline{T}_{2n}\lambda_{2n}\overline{T}^t_{2n}(\Omega_n(e))^tD_{n}^t=\]
\[D_{n}\widetilde{\Omega}_n(e)B_n\overline{T}_{2n}\lambda_{2n}\overline{T}^t_{2n}B^t_n(\widetilde{\Omega}_n(e))^tD_{n}^t\]    
Since $D_{n}$ is invertible
\bea
\widetilde{\Omega}_n(e)B_n\overline{T}_{2n}\lambda_{2n}\overline{T}^t_{2n}B^t_n\widetilde{\Omega}_n(e)^t=0
\eea

Finally a direct calculation shows that
\bea
B_n\overline{T}_{2n}\lambda_{2n}\overline{T}^t_{2n}B^t_n=\Lambda_{2n-2}, 
\eea
which finishes the proof of the Theorem \ref{laggr}.
\begin{remark}
Thus, we obtained  that  every $e\in E_n$ defines the point of $\mathrm{LG}(n-1,V).$ Using the continuity argument described in Section \ref{compactification} we immediately  conclude that every $e\in \overline{E}_n$ defines the point of $\mathrm{LG}(n-1,V)$ as well. From now on we will use the notation $\mathrm{LG}(n-1)$ instead of $\mathrm{LG}(n-1,V)$.
\end{remark}

\section{Fundamental representation of the symplectic group} 
\label{sec:representation}
 In this section we connect the results from \cite{L} to representation theory of the symplectic group.
 The symplectic group is a classical group defined as the set of linear transformations of a $2n$-dimensional vector space over $\Bbb C$ which preserve a non-degenerate skew-symmetric bilinear form. Such a vector space is called a symplectic vector space, and the symplectic group of an abstract symplectic vector space $V$ is denoted $\mathrm{Sp}(V)$. Upon fixing a basis for $V$, the symplectic group becomes the group of $2n \times 2n$ symplectic matrices under the operation of matrix multiplication. This group is denoted by $\mathrm{Sp}(2n,\Bbb C)$. If the bilinear form is represented by the nonsingular skew-symmetric matrix $\Lambda$, then
\[
{\displaystyle \operatorname {Sp} (2n,\Bbb C)=\{M\in M_{2n\times 2n}(\Bbb C):M^{\mathrm {T} }\Lambda M=\Lambda \},}\]
where $M^T$ is the transpose of $M$. 

The Lie algebra of $\mathrm{Sp}(2n, \Bbb C)$ is the set
\[{\displaystyle {\mathfrak {sp}}(2n,\Bbb C)=\{X\in M_{2n\times 2n}(\Bbb C):\Lambda X+X^{\mathrm {T} }\Lambda =0\},}\]
equipped with the commutator as its Lie bracket.

\subsection{The Lam group}
 
  \begin{definition}
  \label{def:xy}
Introduce some auxiliary $2n \times 2n$ matrices
\begin{itemize}
    \item if $i \in 1, \dots, 2n-1$ then $x_i(t)$ is the upper-triangular matrix with ones on the main diagonal and only one non-zero entry $(x_i)_{i,
    i+1}=t$ above the diagonal
    \item if $i \in 1, \dots, 2n-1$ then $y_i(t)$ is the low-triangular matrix  with ones on the main diagonal and only one non-zero entry $(y_i)_{i+1,i}
    =t$ below the diagonal 
    \item if $i =2n$ then 
    $x_{2n}(t)=s_{n} x_1(t) s_{n}^{-1}$ 
     \item if $i=2n$ then $
y_{2n}(t)=s_{n} y_1(t) s_{n}^{-1}$
\end{itemize}  
    where  
    \begin{equation} \label{transpos}
s_{n} = \left(
\begin{array}{cccccccc}
0 & 1 & 0 & 0 &   \cdots & 0\\
0 & 0 & 1 & 0 &   \cdots & 0 \\
0 & 0 & 0 & 1 &   \cdots & 0 \\
\vdots & \vdots &  \vdots &  \vdots   & \ddots & 1\\
(-1)^{n} & 0 & 0 & 0 &  \cdots & 0 
\end{array}
\right)
\end{equation} 
 \end{definition}
The following proposition appeared in \cite{L} and was proved by T.Lam and A.Postnikov.
\begin{proposition}\label{u} For $1 \leq i \leq 2n$, let $u_i(t) = x_i(t)y_{i-1}(t) = y_{i-1}(t)x_i(t)$, where the indices are taking $mod\,\,2n$, be a one-parameter subgroup of $GL(2n).$
Then the matrices $u_i(t)$ satisfy the following relations
\begin{itemize}
\item $u_i(t_1)u_i(t_2) = u_i(t_1 + t_2)$
\item $u_i(t_1)u_j (t_2) = u_j (t_2)u_i(t_1)$ for $|i - j| \geq 2$
\item
$u_i(t_1)u_{i\pm 1}(t_2)u_i(t_3) = u_{i\pm 1}(t_2t_3/(t_1 + t_3 + t_1t_2t_3))u_i(t_1 + t_3 + t_1t_2t_3)u_{i\pm 1}(t_1t_2/(t_1 + t_3 + t_1t_2t_3))$
\end{itemize}
The set of all matrices $u_i(t)$, where $t$ is an arbitrary parameter and  $i=1, \dots, 2n$, generates the group $\mathcal El_{2n}$ which we will call the Lam group.
\end{proposition}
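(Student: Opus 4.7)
The plan is to verify each of the three relations by direct matrix computation, exploiting a crucial simplification. Since $x_i(t) = I + t\,E_{i,i+1}$ and $y_{i-1}(t) = I + t\,E_{i,i-1}$ share the single row index $i$, the elementary matrix products $E_{i,i+1}E_{i,i-1}$ and $E_{i,i-1}E_{i,i+1}$ both vanish because the middle indices do not match. Hence $x_i(t)$ and $y_{i-1}(t)$ commute, so the two expressions for $u_i(t)$ agree, and
\[
u_i(t) = I + t\,a_i, \qquad a_i := E_{i,i+1} + E_{i,i-1}.
\]
Since $a_i^2 = 0$ by the same mechanism, the one-parameter subgroup property $u_i(t_1)u_i(t_2) = u_i(t_1+t_2)$ is immediate. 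The commutation relation for $|i-j|\ge 2$ follows from $E_{a,b}E_{c,d} = \delta_{bc}E_{a,d}$: all four summands in $a_i a_j$ vanish because $i \pm 1 \neq j$ and $j \pm 1 \neq i$, so $u_i(t_1)u_j(t_2) = I + t_1 a_i + t_2 a_j$ is symmetric in $i,j$.

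The braid-like relation is the heart of the proposition. A direct computation using $E_{a,b}E_{c,d} = \delta_{bc}E_{a,d}$ gives
\[
a_i a_{i+1} = E_{i,i} + E_{i,i+2}, \qquad a_{i+1} a_i = E_{i+1,i+1} + E_{i+1,i-1},
\]
together with the collapse identity $(E_{i,i+2}+E_{i,i})\,a_i = a_i$ and its analogue $(E_{i+1,i+1}+E_{i+1,i-1})\,a_{i+1} = a_{i+1}$. Expanding $u_i(t_1)u_{i+1}(t_2)u_i(t_3)$ and using $a_i^2 = 0$ together with these identities yields
\[
I + (t_1+t_3+t_1t_2t_3)\,a_i + t_2\,a_{i+1} + t_1t_2\,(E_{i,i+2}+E_{i,i}) + t_2t_3\,(E_{i+1,i+1}+E_{i+1,i-1}),
\]
and a symmetric expansion applies to $u_{i+1}(f_1)u_i(f_2)u_{i+1}(f_3)$. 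Equating the four coefficients produces the system $f_2 = t_1+t_3+t_1t_2t_3$, $f_2 f_3 = t_1t_2$, $f_1 f_2 = t_2 t_3$, and $f_1+f_3+f_1f_2f_3 = t_2$; the first three solve uniquely for $f_1, f_2, f_3$ as in the statement, while the fourth is then a direct back-substitution into $t_2\cdot(t_1+t_3+t_1t_2t_3)/(t_1+t_3+t_1t_2t_3)$. The $u_{i-1}$ case proceeds identically using $a_{i-1}a_i = E_{i-1,i-1}+E_{i-1,i+1}$ and $a_i a_{i-1} = E_{i,i}+E_{i,i-2}$.

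Finally, the cases in which indices wrap modulo $2n$ are handled by conjugating with the cyclic matrix $s_n$: by construction $u_n(t) = s_n u_1(t) s_n^{-1}$, and analogous identities hold for the other wrap-around generators, so any relation already established for non-wrapping generators transfers to its cyclic image automatically. The only substantive obstacle is the bookkeeping in the braid computation, where one must track four distinct rank-one summands supported in rows $i$ and $i+1$; once the identities $a_i^2 = 0$, the explicit forms of $a_i a_{i\pm 1}$, and the collapse $(E_{i,i\pm 2}+E_{i,i})a_i = a_i$ are in hand, the verification reduces to a short linear-algebra exercise.
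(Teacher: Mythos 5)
Your verification is correct, and it is worth noting that the paper itself supplies no proof of this proposition at all: it simply attributes the result to Lam and Postnikov and cites \cite{L}. So your direct computation is a genuine addition rather than a rederivation. The key simplifications you isolate all check out: $E_{i,i+1}E_{i,i-1}=E_{i,i-1}E_{i,i+1}=0$ gives $u_i(t)=I+ta_i$ with $a_i=E_{i,i+1}+E_{i,i-1}$ and $a_i^2=0$; the products $a_ia_{i+1}=E_{i,i}+E_{i,i+2}$, $a_{i+1}a_i=E_{i+1,i+1}+E_{i+1,i-1}$ and the collapse $a_ia_{i+1}a_i=a_i$ are right; and the four matrices $a_i$, $a_{i+1}$, $E_{i,i}+E_{i,i+2}$, $E_{i+1,i+1}+E_{i+1,i-1}$ are supported on pairwise distinct entries, so equating coefficients is legitimate and yields exactly the stated substitution, with the fourth equation following from the first three as you observe. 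The one place where you should be more careful is the wrap-around: because of the sign $(-1)^{n-1}$ in $s_{2n}$, the seam generators are sign-twisted, e.g.\ $u_1(t)=I+t(E_{1,2}-E_{1,2n})$ and $u_{2n}(t)=I+t(E_{2n,2n-1}-E_{2n,1})$, so the "standard" form $a_i=E_{i,i+1}+E_{i,i-1}$ only holds for $2\le i\le 2n-1$ and the directly verified braid pairs are those with $2\le i\le 2n-2$. Your conjugation argument does close this gap, but the statement to check is that $s_{2n}u_j(t)s_{2n}^{-1}=u_{j-1}(t)$ for \emph{every} $j$ modulo $2n$ (the sign in $s_{2n}$ is precisely what makes this uniform across the seam); also note it is $s_{2n}$, not $s_n$, and the defining identity is $u_{2n}(t)=s_{2n}u_1(t)s_{2n}^{-1}$. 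With that point made explicit, the argument is complete and self-contained.
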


The Lie algebra $\mathfrak{el}_{2n}$ of $\mathcal El_{2n}$ is generated by definition by the logarithms of the generators $u_i(t)$. Denote these generators $\mathfrak{u}_i$.

An explicit representation of $\mathfrak{el}_{2n}$ as a subalgebra of $\mathfrak sl_{2n}$ was given in \cite{L}: 
$$\mathfrak{u}_i=e_i+f_{i-1},\,\,i=2,...,2n-2$$
$$\mathfrak{u}_1=e_1+(-1)^n[e_{1},[e_{2},[...[e_{2n-1}]]...]$$
$$\mathfrak{u}_0 = \mathfrak{u}_{2n}=f_{2n-1}+(-1)^n[f_{2n-1},[f_{2n-2},[...[f_{1}]]...]$$
where $e_i = E_{i,i+1},\; f_i = E_{i+1,i}$. 
It is easy to check $\mathfrak u_i$  satisfy the relations 
\[
 [\mathfrak{u}_i,\mathfrak{u}_j]=0,\,\text{\rm if}\,\, |i-j|>1\]
\[ [\mathfrak{u}_i,[\mathfrak{u}_i,\mathfrak{u}_j]]=-2\mathfrak{u}_i, \,\text{\rm if}\,\, |i-j|=1\,\text{mod\,\,2n}.
\]

The affine version of the electrical Lie algebra $\hat{\mathfrak{el}}_{2n}$ was introduced in \cite{BGG}. This is the affine version of the object introduced in \cite{LP}. It is defined there together with an embedding into affine version of the algebra $\mathfrak{sl}_{2n}$. The algebra $\hat{\mathfrak{el}}_{2n}$ has finite dimensional representations. 
The image of one of them has generators $\mathfrak{u}_i$ obeying the above relations. 

The group generated by the formal exponents of the elements of the affine electric Lie algebra we will call the {\it affine electric Lie group}.
The Lam group $\mathcal El_{2n}$ is a representation of the affine electric Lie group. 
\begin{remark}
The connection of affine electric Lie algebra and the affine symplectic Lie algebra is not clear to us at the moment.
\end{remark}

We write the operators $x_i(t), y_i(t)$ and $u_i(t)$ as acting on  $\mathrm{Gr}(n-1,2n)$ on the {\bf right} to make it consistent with Section \ref{Sec:nonneg} where these operators  arise in the process of adding spikes and bridges to a bipartite network. We therefore will be using an equivalent definition of the symplectic group and the symplectic algebra. For us, for example, the symplectic group is 
\[
{\displaystyle \operatorname {Sp} (2n,\Bbb C)=\{M\in M_{2n\times 2n}(\Bbb C):M \Lambda M^{\mathrm {T}}=\Lambda \}},\]

Recall that we have defined  the subspace $V\subset \mathbb{C}^{2n}$ of dimension $2n-2$ as follows
$$V:=\{v\in \Bbb C^{2n}| \sum_{i=1}^n(-1)^i v_{2i}=0\, \\, \sum_{i=1}^n (-1)^iv_{2i-1}=0\}.$$
Now, we are able to prove the main result of this section.
\begin{theorem} \label{res}
Operators $u_i(t)|_V$ preserve the symplectic form $\Lambda_{2n-2}$ defined in \eqref{def:lambda}. Moreover the restriction of the Lam group to the subspace $V$ generated by the set of matrices $u_i(t)|_V, i=1, \dots, 2n$ is the representation of the symplectic group $\mathrm{Sp}(2n-2)$.  
\end{theorem}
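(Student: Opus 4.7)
The statement splits into three claims to verify in order: (i) the subspace $V$ is stable under $\mathcal{E}l_{2n}$; (ii) each restricted operator $u_i(t)|_V$ preserves $\Lambda_{2n-2}$; and (iii) the subgroup of $Sp(2n-2)$ generated by the $u_i(t)|_V$ is all of $Sp(2n-2)$. Since $u_i(t) = \exp(t\,\mathfrak{u}_i)$, claim (i) reduces to showing that each $\mathfrak{u}_i$ sends $V$ into $V$, i.e., that the two defining functionals $\ell_1(v) = \sum_{j=1}^n (-1)^j v_{2j}$ and $\ell_2(v) = \sum_{j=1}^n (-1)^j v_{2j-1}$ annihilate $\mathfrak{u}_i(V)$. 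For the interior generators $\mathfrak{u}_i = e_i + f_{i-1}$ with $2 \leq i \leq 2n-1$, the action only shifts entries among neighbouring coordinates, and the alternating signs of $\ell_1, \ell_2$ force cancellation. For the boundary generators $\mathfrak{u}_1$ and $\mathfrak{u}_{2n}$, the same conclusion follows by a short induction on the length of the iterated commutator defining them.

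For (ii), I would write each $\mathfrak{u}_i|_V$ explicitly as a matrix with respect to the basis $w_1,\ldots,w_{2n-2}$ of \eqref{V:basis} and verify the Lie-algebra condition
\[
(\mathfrak{u}_i|_V)\,\Lambda_{2n-2} + \Lambda_{2n-2}\,(\mathfrak{u}_i|_V)^T = 0
\]
with $\Lambda_{2n-2}$ as in \eqref{def:lambda}. For the interior generators this amounts to a routine check on matrices of small support; for $\mathfrak{u}_1|_V$ and $\mathfrak{u}_{2n}|_V$ one first expands the long commutator by successively applying the explicit formulas for $e_j|_V$ and $f_j|_V$. Since $\mathfrak{sp}(V, \Lambda_{2n-2})$ is closed under exponentiation to $Sp(2n-2)$, once each $\mathfrak{u}_i|_V$ lies in this Lie algebra one immediately obtains $u_i(t)|_V \in Sp(2n-2)$.

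For (iii), let $G \subset Sp(2n-2)$ denote the subgroup generated by the $u_i(t)|_V$. Each one-parameter subgroup is connected to the identity, so $G$ is a connected Lie subgroup and it suffices to show that the Lie subalgebra $\mathfrak{g}$ generated by $\mathfrak{u}_1|_V, \ldots, \mathfrak{u}_{2n}|_V$ coincides with $\mathfrak{sp}(2n-2)$. The idea is to produce inside $\mathfrak{g}$ a complete set of Chevalley generators of $\mathfrak{sp}(2n-2)$: the interior $\mathfrak{u}_i|_V$ combine by successive brackets to yield the positive-root part, while $\mathfrak{u}_1|_V$ and $\mathfrak{u}_{2n}|_V$ contribute the remaining long-root directions, so that the $2n$ restricted operators realise a set of simple root vectors for an affine Dynkin diagram of type $\widetilde{C}_{n-1}$ whose underlying finite quotient is $\mathfrak{sp}(2n-2)$. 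Alternatively one can do a direct dimension count by exhibiting $(n-1)(2n-1) = \dim \mathfrak{sp}(2n-2)$ linearly independent iterated brackets, or invoke the identification of the affine electrical Lie algebra with an affine symplectic algebra from \cite{LP, BGG}.

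The main obstacle is (iii): steps (i) and (ii) are explicit, albeit tedious, matrix computations, while (iii) requires showing that the $2n$ generators do not fall into a proper Lie subalgebra of $\mathfrak{sp}(2n-2)$. The cleanest route appears to be the structural one, matching the $\mathfrak{u}_i|_V$ with the Chevalley generators of type $\widetilde{C}_{n-1}$ so that surjectivity onto $\mathfrak{sp}(2n-2)$ is automatic from the standard theory of affine Kac--Moody algebras.
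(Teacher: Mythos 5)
Your overall strategy coincides with the paper's: the authors also proceed generator by generator, computing the restriction explicitly as $u_i(t)|_V=B_nu_i(t)B_n^{-1}$ in the basis \eqref{V:basis} (so stability of $V$ and the identity $u_i(t)|_V\,\Lambda_{2n-2}\,u_i(t)|_V^{\mathrm T}=\Lambda_{2n-2}$ are read off from closed-form matrices, e.g. $u_i(t)|_V=x_{i-2}(t)y_{i-1}(t)$ for $3\le i\le 2n-2$, with the two wrap-around generators obtained by conjugation with $(s_{n-1}|_V)^2$), and they then conclude generation of $Sp(2n-2)$ from the fact that the Lie algebra elements $e_{i-2}+f_{i-1}$ generate $\mathfrak{sp}_{2n-2}$ by a dimension count, exponentials of a generating set of a simple Lie algebra generating the whole group. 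Your steps (i) and (ii) are the infinitesimal version of this and are fine, and your fallback options for (iii) --- the direct dimension count on iterated brackets, or citing the Lam--Pylyavskyy identification of the electrical Lie algebra with a symplectic Lie algebra --- are exactly what the paper relies on.

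The one genuine problem is the route you single out as ``cleanest'' for step (iii). The restricted generators $\mathfrak{u}_i|_V$ cannot be a set of Chevalley generators for an affine Kac--Moody algebra of type $\widetilde{C}_{n-1}$ in the standard presentation: as recorded in the paper, they satisfy the \emph{electrical} Serre relation $[\mathfrak{u}_i,[\mathfrak{u}_i,\mathfrak{u}_j]]=-2\mathfrak{u}_i$ for $|i-j|=1$, whereas simple root vectors of a Kac--Moody algebra with $a_{ij}=-1$ would have to satisfy $[\mathfrak{u}_i,[\mathfrak{u}_i,\mathfrak{u}_j]]=0$. So surjectivity onto $\mathfrak{sp}(2n-2)$ is not ``automatic from the standard theory of affine Kac--Moody algebras''; indeed the authors explicitly remark that the relation between the affine electrical Lie algebra and the affine symplectic Lie algebra is not clear to them. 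If you drop that branch and carry out the dimension count (it suffices to use only the interior generators $2\le i\le 2n-1$, whose restrictions already generate $\mathfrak{sp}_{2n-2}$), your argument closes the gap and matches the paper.
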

\begin{proof}

Using the definition of the matrix $B_n$ (see \eqref{eq:Binverse} and above) an explicit form of the restriction of $u_i(t)$ on the subspace $V$ in the basis \eqref{V:basis} could be computed as:
\begin{equation}
\label{uib}
    u_i(t)|_V=B_nu_i(t)B_n^{-1}.
\end{equation}
In particular,

\begin{eqnarray} \label{expl-rest}
u_1(t)|_V =& (s_{n}|_V)^2 (u_3(t)|_V) (s_{n}|_V)^{-2},\\\nonumber
u_2(t)|_V =&y_1(t),\\\nonumber
u_i(t)|_V =&x_{i-2}(t)y_{i-1}(t),\;3\leq i\leq 2n-2,\\\nonumber
u_{2n-1}(t)|_V =&x_{2n-3}(t);\\\nonumber
u_{2n}(t)|_V =& (s_{n}|_V)^2 (u_2(t)|_V)(s_{n}|_V)^{-2}.\nonumber
\end{eqnarray}
here
\begin{equation} 
s_{n}|_V = B_ns_{n}B_n^{-1}= \left(
\begin{array}{cccccccccc}
0& 1  & 0  & 0   &\ldots  & 0&  0 & 0& \\
0& 0  & 1  & 0   &\ldots&   0&  0 & 0&\\
0& 0 & 0  & 1 &\ldots&   0&  0 & 0&\\
\vdots & \vdots & \vdots & \vdots  & \ddots &   \vdots & \vdots &   \vdots & \\
0& 0 & 0  & 0  &\ldots&    0&  0&  1& \\
(-1)^n & 0 & (-1)^{n+1}  & 0  &\ldots&  0 &   1 & 0& \\
\end{array}
\right).
\end{equation}

Let us give here explicit expressions for $u_1(t)|_V$ and $u_{2n}(t)|_V$.

\begin{equation*} 
u_{1}(t)|_V = \left(
\begin{array}{cccccccccc}
1& t  & 0  & -t   &\ldots  & (-1)^{n+1}t&  0 & (-1)^{n}t& \\
0& 1  & 0  & 0   &\ldots&   0&  0 & 0&\\
0& 0 & 1  & 0 &\ldots&   0&  0 & 0&\\
0& 0 & 0  & 1  &\ldots&   0&  0 & 0&\\
0& 0 & 0  & 0  &\ldots&   0&  0 & 0&\\
\vdots & \vdots & \vdots & \vdots  & \ddots &   \vdots & \vdots &   \vdots & \\
0& 0 & 0  & 0  &\ldots&    0&  1&  0& \\
0& 0 & 0  & 0  &\ldots&  0 &   0 & 1& \\
\end{array}
\right),
\end{equation*}
\begin{equation*} 
u_{2n}(t)|_V = \left(
\begin{array}{cccccccccc}
1& 0  & 0  & 0   &\ldots  & 0&  0 & 0& \\
0& 1  & 0  & 0   &\ldots&   0&  0 & 0&\\
0& 0 & 1  & 0 &\ldots&   0&  0 & 0&\\
0& 0 & 0  & 1  &\ldots&   0&  0 & 0&\\
0& 0 & 0  & 0  &\ldots&   0&  0 & 0&\\
\vdots & \vdots & \vdots & \vdots  & \ddots &   \vdots & \vdots &   \vdots & \\
0& 0 & 0  & 0  &\ldots&    0&  1&  0& \\
(-1)^n t& 0 & (-1)^{n+1}t  & 0  &\ldots&  0 &   t & 1& \\
\end{array}
\right).
\end{equation*}

With these formulas it is immediately obvious that
$u_i(t)|_V\Lambda_{2n-2}u_i(t)^t|_V=\Lambda_{2n-2}$.

It is well known that for a simple algebraic group the image of the exponential map generates the whole group. 
The generators $u_i(t)$, $2\leq i\leq 2n-1$ are the exponents of the elements $e_{i-2}+f_{i-1}$, where $f_{2n-2}=e_0=
0$ of the Lie algebra $\mathfrak{sl}_{2n-2}$. These generate the symplectic Lie algebra $\mathfrak {sp}_{2n-2}$ as the dimension count shows therefore we conclude that the above representation of the Lam group
factors through the vector representation of $\mathrm{Sp}(2n-2)$.
\end{proof}

\subsection{The symplectic group and the Lagrangian Grassmannian}

Recall that in \cite{L} it was proved that the image of the set  $\overline E_n$  can be described in combinatorial terms as follows.

\begin{definition}
  Define a matrix $A_n=(a_{I\sigma}),$ where $I$ is a subset of the set $\{1,\dots, 2n\}$, $|I|=n-1$, and $\sigma\in \mathcal{NC}_n$  as follows:
  \begin{equation*}
a_{I\sigma}=\begin{cases}
   1,\, \text{if}\,\, \text{$\sigma$ is concordant with  $I$}   \\
   0,\, \text{otherwise.}\,\\
    
      \end{cases}
    \end{equation*}
Let $H$ be the column space of $A_n$, it is a subspace of $\bigwedge^{n-1}\Bbb R^{2n}$ and $\dim\,H=C_{n}$, the Catalan number. Denote by $\Bbb PH$ the projectivization of $H$. 
\end{definition}
We will be using later the following notations. The standard basis vectors of $\Bbb R^{2n}$ we will denote by $e_i$. For $I=(i_1, i_2, \dots, i_{n-1})$ denote by $e_{I}\in \bigwedge^{n-1}\Bbb R^{2n}$ a standard vector $e_{i_1}\wedge e_{i_2}\wedge \dots \wedge e_{i_{n-1}}$ and by $v_{\sigma}=\sum_{I}a_{I\sigma}e_{I}$  a column of $A_n$ labeled by $\sigma$.

Notice that  Definition \ref{def-elb} and  Definition \ref{def-grov2}  naturally generalized for the cactus networks, therefore we are able to refine Theorem \ref{elcon}:
\begin{theorem} \cite{L} \label{elconcom}
Let $e \in \overline{E}_n$ and $N(\Gamma, \omega)$ be the bipartite network associated with $e$   then the following hold: 
$$\Delta_{I}^M= \sum\limits_{\sigma} a_{\sigma I} L_{\sigma},$$
here summation is over all non-crossing partitions $\sigma.$ 
\end{theorem}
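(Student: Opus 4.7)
The statement is really two things bundled together. First, for genuine networks $e\in E_n$, it is a direct rewriting of Theorem \ref{elcon}: by the very definition of the matrix $A_n$, $a_{\sigma I}=1$ exactly when $\sigma$ is concordant with $I$, so $\sum_\sigma a_{\sigma I}L_\sigma$ collapses to the sum over concordant $\sigma$, which is $\Delta_I^M$ by Theorem \ref{elcon}. Hence the real content is the extension of the formula to cactus networks $\overline e\in \overline E_n$.

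My plan is a polynomiality/continuity argument combined with a local analysis at each contracted boundary-to-boundary edge. By Definition \ref{def-grov2} each $L_\sigma$ is literally a polynomial in the edge conductivities $\omega(e)$, and by Definition \ref{def-elb} the weights of $N(\Gamma,\omega)$ are linear (indeed monomial) functions of the $\omega(e)$, so each $\Delta_I^M$ is a polynomial in the $\omega(e)$ as well. Thus both sides of the identity are polynomials in $(\omega(e))_{e\in E(\Gamma)}$, and for $e\in E_n$ (edges with arbitrary positive weights) they agree by the rewriting above. Since $E_n$ is Zariski dense in the parameter space of all real weighted graphs, the identity holds as a polynomial identity, and in particular extends to all assignments of zero weights. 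This already handles deletions, i.e.\ the case of zero conductivities.

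The remaining case is edges between two boundary vertices with infinite conductivity, which is what distinguishes cactus networks from ordinary ones. For each such edge I would introduce a large parameter $c$ and study both sides as $c\to\infty$. The key observation is that both sides grow at exactly the same rate: on the grove side the leading behavior picks up only the groves that use the contracted edge, and these are naturally in bijection with groves on the cactus (the two boundary vertices become a single vertex); on the matching side, a local analysis of the bipartite gadget that Definition \ref{def-elb} glues around the contracted edge shows that the matchings contributing to the leading term of $\Delta_I^M$ are exactly those induced by the bipartite network associated to the cactus graph via the same construction. Dividing by the common leading power of $c$ and passing to the limit produces the sought identity for $\overline e$.

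The bookkeeping in the last paragraph is the main obstacle: one must verify carefully that the local degeneration of the bipartite gadget around each contracted edge yields precisely the bipartite network built from the cactus graph, including the rules about the black face vertices $b_F$ and the edge-midpoint white vertices $w_e$, and that the parity in the index set $I$ (which keeps track of the boundary vertices on the two sides of the cactus gluing) is consistent with the concordance condition defining $a_{\sigma I}$. Once this local matching is verified at a single contracted edge, induction on the number of contracted edges extends it to arbitrary $\sigma\in\mathcal{NC}_n$, and the polynomial identity on $E_n$ combined with this limiting procedure yields the result on all of $\overline E_n$.
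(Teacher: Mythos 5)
You should first note that the paper does not actually prove this statement: it is imported from \cite{L}, and the paper's entire justification is the remark preceding it that Definitions \ref{def-elb} and \ref{def-grov2} ``naturally generalize'' to cactus networks, after which the concordance argument of Theorem \ref{elcon} is applied verbatim on the cactus. Your first paragraph captures exactly that observation: by the definition of $A_n$, the sum $\sum_\sigma a_{\sigma I}L_\sigma$ is just the sum over $\sigma$ concordant with $I$, so for honest networks the statement is a rewriting of Theorem \ref{elcon}. That part is correct and is precisely what the paper relies on.

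The gap is in your treatment of the degenerate cases, which is where all the content of the extension to $\overline{E}_n$ lives. Two concrete problems. First, even in the ``easy'' zero-weight case, setting $\omega(e)=0$ in the bipartite network of $\Gamma$ is not literally the same as forming the bipartite network of $\Gamma\setminus e$: deleting an edge changes the faces of $\Gamma$, hence the black face vertices $b_F$, the white midpoint vertex $w_e$, and the attachment of the even boundary vertices, so the two $\Delta_I^M$'s are computed on different graphs and the polynomial-identity argument does not by itself identify them. Second, and more seriously, for the contracted boundary-to-boundary edges the quantity you need is an \emph{exact} equality between $\Delta_I^M$ computed on the cactus's bipartite network and $\sum_\sigma a_{\sigma I}L_\sigma$ computed from groves on the cactus; your $c\to\infty$ argument controls only the common leading order, i.e.\ the projective class of the Plücker vector, and the verification that the degenerating gadget around each contracted edge reproduces the cactus's bipartite gadget (with consistent normalization and consistent parity in $I$) is exactly the step you defer as ``bookkeeping.'' That step is the theorem for cactus networks; without it the proposal establishes only the $E_n$ case. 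The shorter route, and the one the paper (following \cite{L}) intends, is to rerun the concordance bijection of Theorem \ref{elcon} directly on the cactus, where the only new ingredient is checking that concordance interacts correctly with the identification of boundary points prescribed by $\sigma\in\mathcal{NC}_n$.
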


The proof of the next statement is an easy corollary of Theorem \ref{elconcom}.
\begin{theorem}  \cite{L} \label{Lthm}
The image of the set  $\overline E_n$ in 
$\mathrm{Gr}(n-1, 2n)$ belongs to the 
intersection $\mathrm{Gr}(n-1,2n)\cap\Bbb PH$. 
\end{theorem}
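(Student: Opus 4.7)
The plan is to combine the combinatorial identity of Theorem \ref{elconcom} with the very definition of $H$ as the column span of $A_n$. Passing to the Plücker embedding, a point $X(e)\in\mathrm{Gr}(n-1,2n)$ is represented (up to scalar) by the vector
\[
\mathcal{P}(e) \;=\; \sum_{|I|=n-1} \Delta_I^M(e)\, e_I \;\in\; \bigwedge\nolimits^{n-1}\mathbb R^{2n}.
\]
The goal is to exhibit $\mathcal P(e)$ as a linear combination of the columns $v_\sigma=\sum_I a_{I\sigma}e_I$ of $A_n$, with coefficients given by the grove measurements $L_\sigma$. So the entire proof reduces to rearranging a single double sum.

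First I would invoke Theorem \ref{elconcom} to substitute $\Delta_I^M(e)=\sum_\sigma a_{I\sigma}L_\sigma(e)$ for each $I$. Interchanging the order of summation yields
\[
\mathcal P(e) \;=\; \sum_{I}\Bigl(\sum_\sigma a_{I\sigma}L_\sigma(e)\Bigr) e_I \;=\; \sum_{\sigma\in\mathcal{NC}_n} L_\sigma(e)\Bigl(\sum_{I} a_{I\sigma}\,e_I\Bigr) \;=\; \sum_{\sigma\in\mathcal{NC}_n} L_\sigma(e)\, v_\sigma.
\]
Since each $v_\sigma\in H$ by construction, $\mathcal P(e)\in H$, and therefore the projective class $[\mathcal P(e)]$ lies in $\mathbb P H$. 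As $X(e)$ is by definition the image of this class in $\mathrm{Gr}(n-1,2n)$ under the Plücker embedding, it belongs to $\mathrm{Gr}(n-1,2n)\cap\mathbb P H$. This covers every $e\in E_n$.

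For the extension to the compactification $\overline E_n$ I would use the continuity argument from Section \ref{compactification}: the grove polynomials $L_\sigma$ are defined directly from the weighted graph (with degenerations obtained by taking $\omega(e)\to 0$ or $\omega(e)\to\infty$ and contracting/deleting accordingly), and the identity $\mathcal P(e)=\sum_\sigma L_\sigma(e)\,v_\sigma$ is polynomial in the $L_\sigma$'s, so it persists on the closure. Alternatively one appeals directly to the generalisation of Theorem \ref{elconcom} to cactus networks already noted in the excerpt.

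There is essentially no obstacle here: the theorem is a tautological consequence of the concordance formula for $\Delta_I^M$ once one recognises $v_\sigma$ as the column of $A_n$ indexed by $\sigma$. The only point deserving care is that $\mathcal P(e)$ is nonzero, so that the projective class is well defined; this follows from the fact that $L=L_{\bar 1|\bar 2|\cdots|\bar n}(e)>0$ for $e\in E_n$ (every connected planar weighted graph admits at least one grove decomposing all boundary vertices into singletons), and for boundary points $e\in\overline E_n\setminus E_n$ one uses the limiting argument of Section \ref{compactification}.
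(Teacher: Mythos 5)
Your proposal is correct and is exactly the argument the paper intends: the paper dismisses this theorem as ``an easy corollary of Theorem \ref{elconcom}'', and your double-sum rearrangement $\sum_I \Delta_I^M e_I = \sum_\sigma L_\sigma v_\sigma$ is precisely that corollary written out, with the non-vanishing of $\mathcal P(e)$ and the passage to $\overline E_n$ handled the same way (positivity of $L$ and the continuity/cactus-network generalisation). Nothing further is needed.
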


We will show that $H$ is invariant under the action of the group $\mathrm{Sp}(2n-2)$, moreover it can be identified with the fundamental representation of the group $\mathrm{Sp}(2n-2)$ which corresponds to the last vertex of the Dynkin diagram  for the Lie algebra $\mathfrak{sp}_{2n-2}$.

Recall some basic facts about this representation. Fix a symplectic form $\omega\in \bigwedge^2\Bbb R^{2k}$ and denote by  $\mathrm{Sp}(2k)$ the subgroup of elements of $SL(2k)$ which preserve $\omega$. We refer to it as the symplectic group. Representation theory of $\mathrm{Sp}(2k)$ is well known. The fundamental representations are irreducible representations labeled by the vertices of the Dynkin diagram for the Lie algebra $\mathfrak{sp}_{2n-2}$. The representation corresponding the last vertex is a sub-representation in the space of the fundamental representation $\bigwedge^k \Bbb C^{2k}$ of $SL(2k)$ and can be described as the kernel of the operator
\begin{equation}\label{def:Q}
Q:\bigwedge^k \Bbb C^{2k}\rightarrow \bigwedge^{k-2} \Bbb C^{2k}    
\end{equation}
given by the formula
$$Q(x_1\wedge x_2\cdots \wedge x_k)=\sum_{i<j} (-1)^{i+j}\omega(x_i,x_j)x_1 \wedge\cdots \wedge \hat x_i \wedge\cdots \wedge \hat x_j \wedge \cdots \wedge x_k$$
where $\omega$ is a symplectic form, see \cite{FH} for example.  Computing the dimension of the kernel we obtain
$$\binom{2k}{k}-\binom{2k}{k-2}=C_{k+1}$$

The appearance of the Lagrangian Grassmannian is very natural in this context.
\begin{lemma} $\mathrm{LG}(k,2k) = \mathrm{Gr}(k,2k)\cap P(\text{ker}\, Q)$.
\end{lemma}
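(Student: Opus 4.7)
The plan is to prove both inclusions directly from the definition of $Q$, exploiting one simple linear-algebra fact: for any $k$-dimensional subspace $W\subset \mathbb{C}^{2k}$ with basis $x_1,\dots,x_k$, the $\binom{k}{2}$ wedge products
\[
x_1\wedge \cdots \wedge \hat x_i \wedge \cdots \wedge \hat x_j \wedge \cdots \wedge x_k, \qquad 1\le i<j\le k,
\]
form a basis of $\bigwedge^{k-2} W\subset \bigwedge^{k-2}\mathbb{C}^{2k}$, so in particular they are linearly independent in the ambient space. This is what makes the statement essentially tautological once $Q$ is written out on a representative.

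First I would dispose of the easy inclusion $\mathrm{LG}(k,2k)\subseteq \mathrm{Gr}(k,2k)\cap \mathbb{P}(\ker Q)$. If $W$ is Lagrangian and $x_1,\dots,x_k$ is any basis, then $\omega(x_i,x_j)=0$ for all $i,j$, so every term in the formula
\[
Q(x_1\wedge \cdots \wedge x_k)=\sum_{i<j}\omega(x_i,x_j)\, x_1\wedge \cdots \wedge \hat x_i \wedge \cdots \wedge \hat x_j \wedge \cdots \wedge x_k
\]
vanishes. Hence the Plücker representative $[x_1\wedge\cdots\wedge x_k]$ lies in $\mathbb{P}(\ker Q)$, and the well-definedness of this check (independence from the choice of basis) follows from $Q$ being a linear map on $\bigwedge^k\mathbb{C}^{2k}$, so passing to another basis of $W$ multiplies the representative by the determinant of the change-of-basis matrix.

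For the reverse inclusion, take $W\in \mathrm{Gr}(k,2k)$ with $[x_1\wedge\cdots\wedge x_k]\in \mathbb{P}(\ker Q)$, i.e.
\[
\sum_{i<j}\omega(x_i,x_j)\, x_1\wedge \cdots \wedge \hat x_i \wedge \cdots \wedge \hat x_j \wedge \cdots \wedge x_k=0.
\]
By the linear-independence observation above, each coefficient $\omega(x_i,x_j)$ is forced to vanish, so $\omega|_W\equiv 0$. Since $\dim W=k$ is half of $2k$, the subspace $W$ is maximally isotropic, i.e.\ Lagrangian.

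There is really no main obstacle here; the whole content is the recognition that the expressions $x_1\wedge \cdots \wedge \hat x_i \wedge \cdots \wedge \hat x_j \wedge \cdots \wedge x_k$ span $\bigwedge^{k-2}W$ freely. The only places where one might pause are to note that $Q$ is a well-defined linear operator (so that $\mathbb{P}(\ker Q)$ makes sense), and to confirm the standard dimension check $\binom{2k}{k}-\binom{2k}{k-2}=C_{k+1}$ already recorded in the excerpt, which matches $\dim \mathrm{LG}(k,2k)$ under the Plücker embedding and is consistent with the equality of sets just established.
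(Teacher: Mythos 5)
Your proof is correct and follows essentially the same route as the paper: the Lagrangian inclusion is immediate from the formula for $Q$, and the reverse inclusion reads off $\omega(x_i,x_j)=0$ from $Q(x_1\wedge\cdots\wedge x_k)=0$. The only difference is that you make explicit the linear independence of the vectors $x_1\wedge\cdots\wedge\hat x_i\wedge\cdots\wedge\hat x_j\wedge\cdots\wedge x_k$, which the paper's ``and thus'' leaves implicit.
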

\begin{proof}
Clearly $\mathrm{LG}(k,2k) \subset \mathrm{Gr}(k,2k) \cap P(\ker Q)$.

For the other inclusion, if $w \in \mathrm{Gr}(k,2k) \cap P(\ker Q)$, then $w$ is the equivalence class of $x_1 \wedge\ldots \wedge x_k$ with $x_1,\ldots,x_k$ linearly independent and $Q(w) = 0$, that is 
$$Q(w) = \sum_{i<j} (-1)^{i+j}\omega (x_i,x_j) x_1 \wedge\ldots \hat x_i \wedge\ldots \wedge \hat x_j \wedge\ldots\wedge x_k = 0$$ 
$1\leq i<j\leq k$ and thus $\omega (x_i,x_j) =0$ for all $1\leq i < j \leq k$.
\end{proof}

The above lemma suggests a way to refine the statement of the Theorem \ref{Lthm}. We have already proved  that the image of the set $E_n$ lands
in  
$\mathrm{Gr}(n-1,2n-2)\cong\mathrm{Gr}(n-1,V)\subset \mathrm{Gr}(n-1,2n)$. Now one would expect that the subspace $H$ is the kernel of the operator $Q$ and therefore is invariant under the action of the group $\mathrm{Sp}(2n-2)$. This is indeed the case.

\begin{theorem}\label{mainth}
The following holds
\begin{itemize}
    \item $H $ is the subspace of $\bigwedge^{n-1} V$. 
    \item  $H$ coincides with the kernel of the operator $Q$. 
\item The subspace $H$ is invariant under the action of the Lam group and hence the action of $\mathrm{Sp}(2n-2)$. Moreover $H$ is the the space of the fundamental representation of the group $\mathrm{Sp}(2n-2)$ which corresponds to the last vertex of the Dynkin diagram.
\end{itemize}
\end{theorem}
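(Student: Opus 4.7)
The plan is to establish the three bullet points in sequence, anchored by the dimensional coincidence
\[
\dim H \;=\; C_n \;=\; \binom{2n-2}{n-1}-\binom{2n-2}{n-3} \;=\; \dim\ker Q,
\]
where the right-hand equality is the classical formula for the top irreducible summand of $\bigwedge^{n-1}V$ under $\mathfrak{sp}_{2n-2}$. Thanks to this equality, once either inclusion $H\subseteq\bigwedge^{n-1}V\cap\ker Q$ or its reverse is established, dimension counting will produce $H=\ker Q\subseteq\bigwedge^{n-1}V$ in one stroke, so the first two claims can be handled simultaneously.

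The reduction step is Theorem~\ref{elconcom}: for every $e\in\overline{E}_n$ the Plücker vector of the row span of $\Omega_n(e)$ can be written as
\[
\sum_I \Delta_I^M(e)\,e_I \;=\; \sum_{\sigma\in\mathcal{NC}_n} L_\sigma(e)\,v_\sigma .
\]
By the very first lemma of Section~\ref{sec:Lagr} the row span of $\Omega_n(e)$ lies in $V$, so this vector lies in $\bigwedge^{n-1}V$; by Theorem~\ref{laggr} combined with the preceding lemma identifying $\mathrm{LG}(n-1,V)$ with $\mathrm{Gr}(n-1,V)\cap\mathbb{P}(\ker Q)$, it also lies in $\ker Q$. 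Provided the grove partition functions $\{L_\sigma\}_{\sigma\in\mathcal{NC}_n}$ are linearly independent as functions on $\overline{E}_n$, the displayed relation can be inverted to express each $v_\sigma$ as a linear combination of such Plücker vectors; consequently every $v_\sigma$ lies in $\bigwedge^{n-1}V\cap\ker Q$, which delivers $H\subseteq\ker Q$ and then $H=\ker Q$ by the dimension count above.

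The third claim is then essentially formal. Theorem~\ref{res} exhibits a representation of the Lam group on $V$ that factors through $Sp(2n-2,\Lambda_{2n-2})$. Because the action preserves $\Lambda_{2n-2}$, the induced action on $\bigwedge^{n-1}V$ commutes with $Q$, hence preserves $\ker Q=H$. The identification of $\ker Q$ with the irreducible fundamental representation of $Sp(2n-2)$ attached to the last vertex of the Dynkin diagram of type $C_{n-1}$ is standard and will be quoted from \cite{FH}.

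The hard part will be the linear independence of $\{L_\sigma\}_{\sigma\in\mathcal{NC}_n}$ as functions on $\overline{E}_n$. My plan is to produce, for each $\sigma\in\mathcal{NC}_n$, a cactus network $e_\sigma$ obtained by collapsing the boundary circle along $\sigma$; for any such $e_\sigma$ the boundary partition of any surviving grove must coarsen $\sigma$, so $L_{\sigma'}(e_\sigma)=0$ unless $\sigma\preceq\sigma'$ in the refinement order on $\mathcal{NC}_n$, producing a triangular evaluation matrix whose diagonal entries can be made non-zero by a judicious choice of interior edge weights; inverting this system yields the required linear independence. If this combinatorial construction proves unexpectedly delicate, the fall-back is a direct computation of $\iota_{\ell_1}v_\sigma$, $\iota_{\ell_2}v_\sigma$ and $Q(v_\sigma)$ for $\ell_1=\sum_i(-1)^ie_{2i}^*$, $\ell_2=\sum_i(-1)^ie_{2i-1}^*$: one pairs $\sigma$-concordant index sets via an involution swapping adjacent missing indices inside a block of $\sigma$ or $\tilde\sigma$, and the alternating signs coming from $\ell_j$ or from $\Lambda_{2n-2}$ are expected to cancel within each pair.
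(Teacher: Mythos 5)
Your proposal is correct and follows essentially the same route as the paper: both hinge on the dimension count $\dim H=C_n=\dim\ker Q$ together with the expansion $\sum_I\Delta_I^M e_I=\sum_{\sigma}L_\sigma v_\sigma$ from Theorem~\ref{elconcom} and a triangularity over $\mathcal{NC}_n$ obtained by realizing each $\sigma$ by a degenerate network, which forces every $v_\sigma$ into $\bigwedge^{n-1}V\cap\ker Q$. The only (cosmetic) difference is packaging: the paper runs an induction on $n-c(\sigma)$ using ordinary networks in $E_n$ whose graph components induce $\sigma$ on the boundary, so it never needs the cactus/limit extension of Theorem~\ref{laggr}, whereas you invert a triangular evaluation matrix at cactus networks; the third bullet is handled identically in both.
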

\begin{proof}
Note that the operator $Q$ defined above descents to the projectivization of the source and the target. Therefore it makes sense to apply it to points of $\mathrm{Gr}(n-1,2n)$.
Since the basis vectors of $H$ are labeled by $\sigma\in\mathcal{NC}_n$
we can prove the first two statements by induction on a number $r(\sigma)$ defined for $\sigma$ by the following formula:
$$r(\sigma)=n-c(\sigma),$$
where $c(\sigma)$ is the number of connected components of $\sigma$.

The basis of induction is provided by the partition $\sigma=(1|2|3|\dots|n)$ since it is the only partition with $r(\sigma)=0$.
Let  $e_{\emptyset} \in E_n$ be the empty electrical network then by direct calculation we conclude that 
\bea
X(e_{\emptyset})=(e_2+e_4)\wedge (e_4+e_6)\wedge \dots \wedge(e_{2n-2}+e_{2n})= w_2 \wedge w_4 \dots \wedge w_{2n-2}.
\eea
It is clear that $w_i\in V$ therefore $X(e_{\emptyset})\in\bigwedge^{n-1} V$  and by Theorem \ref{laggr} it belongs to the kernel of $Q$ as well. 


On the other hand $X(e_{\emptyset})=v_{1|2|3|\dots|n}$ so we conclude that the basis vector $v_{1|2|3|\dots|n}\in H$  belongs to $\bigwedge^{n-1}V$ and to the kernel of $Q$ providing therefore the basis of the induction.

Now let $\sigma$ be an arbitrary non-crossing partition and $v_\sigma\in H$ the corresponding basis vector. Consider any electrical network $e_{\sigma}(\Gamma, \omega) \in E_n$ such that the connected components of the graph $\Gamma$ induce on the boundary vertices a non-crossing partition equal to $\sigma$. The point $X(e_{\sigma})$ belong to $\mathrm{LG}(n-1)$ by Theorem \ref{laggr} therefore
\bea
Q(X(e_{\sigma}))=Q(\sum_{I} \Delta_I e_{I})=0.
\eea
Using Theorem \ref{elcon} rewrite the last identity as follows
\begin{equation}
  Q(\sum_{I} \Delta_I e_{I})=Q(\sum_{I}(\sum_{\delta\in\mathcal{NC}_n}a_{I\delta}L_{\delta})e_{I})=Q(L_{\sigma}v_{\sigma})+\sum_{\sigma'\not=\sigma}Q(L_{\sigma'}v_{\sigma'})=0.  
\end{equation}
It is easy to see that each $\sigma'$ with $L_{\sigma'}$ not equal to zero has the number $r(\sigma')$ strictly less than $\sigma$ so by the induction hypothesis each $v_{\sigma'}$ belongs to $\bigwedge^{n-1}V$ and to the kernel of $Q$, thus we obtain that $v_{\sigma}$ belongs to $\bigwedge^{n-1}V$ and to the kernel of $Q$ as well.

Since the subspace $H$ belongs to the kernel of the operator $Q$ and has the same dimension as the kernel it coincides with it. 

The restriction of  the Lam group action on the  subspace $V$ coincides by Theorem \ref{res} with the standard action of 
$\mathrm{Sp}(2n-2)$ on $\Bbb R^{2n-2}\cong V$. By the above the  kernel of the operator $Q$ and hence the subspace $H$ is the representation space of the fundamental representation of $\mathrm{Sp}(2n-2)$ which corresponds to the last vertex of the Dynkin diagram.  
\end{proof}

\begin{remark} Various explicit bases of this subrepresentation are of interest and have been studied a lot, see \cite{PLG}, \cite{DC} for example. 

It is interesting to compare the basis found in \cite{L} to the standard bases found in \cite{DC}.
\end{remark}
Putting it all together allows us to refine the statement of Theorem \ref{Lthm}.

\begin{theorem}
The image of the set $\overline{E}_n$ maps to the projectivisation of the compactification of the orbit of the highest weight vector $X(e_{\emptyset})$ in the fundamental representation of the group $\mathrm{Sp}(2n-2)$ which corresponds to the last vertex in the Dynkin diagram for the Lie algebra $\mathfrak{sp}_{2n-2}$. 
\end{theorem}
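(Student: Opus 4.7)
The plan is to combine the results already established earlier in the paper with standard representation-theoretic facts about the Borel--Weil realisation of fundamental representations. By Theorem \ref{laggr}, together with the compactification argument of Section \ref{compactification}, the image $X(\overline{E}_n)$ is contained in $\mathrm{LG}(n-1,V)$. By Theorem \ref{mainth}, the subspace $H\subset \bigwedge^{n-1}V$ is precisely $\ker Q$, so that $\mathrm{LG}(n-1,V)=\mathrm{Gr}(n-1,V)\cap \mathbb{P}H$, and $H$ is the fundamental representation of $Sp(2n-2)$ corresponding to the last vertex of the Dynkin diagram. It therefore suffices to identify the projective $Sp(2n-2)$-orbit of $X(e_\emptyset)=w_2\wedge w_4\wedge\cdots\wedge w_{2n-2}$ inside $\mathbb{P}H$ with the Plücker image of $\mathrm{LG}(n-1,V)$.

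The key representation-theoretic input is the following: for a simple algebraic group acting on an irreducible finite-dimensional representation, the projective orbit of a highest weight vector is the unique closed $G$-orbit, and $Sp(2n-2)$ acts transitively on it. For the fundamental representation attached to the last vertex of the Dynkin diagram of $\mathfrak{sp}_{2n-2}$, this closed orbit is exactly the Plücker-embedded Lagrangian Grassmannian $\mathrm{LG}(n-1,V)\subset \mathbb{P}H$, a fact which also follows from the lemma preceding Theorem \ref{mainth} combined with the Borel--Weil theorem. Consequently, it is enough to verify that $X(e_\emptyset)$ represents, up to a nonzero scalar, a highest weight vector of $H$.

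To carry out this verification, I would fix a Borel subgroup $B\subset Sp(2n-2)$ by choosing as its unipotent radical the group generated by the "upper-triangular" factors $x_i(t)$ appearing in the explicit description \eqref{expl-rest} of the operators $u_i(t)|_V$, together with the torus of diagonal elements. Using the explicit basis $\{w_1,\ldots,w_{2n-2}\}$ of $V$ from \eqref{V:basis} and the explicit matrices for $u_i(t)|_V$, one then checks directly that the isotropic flag
\[
0\subsetneq \langle w_2\rangle \subsetneq \langle w_2,w_4\rangle \subsetneq \cdots \subsetneq \langle w_2,w_4,\ldots,w_{2n-2}\rangle
\]
is stabilised by this Borel. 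Equivalently, each $x_i(t)$-type generator either fixes $X(e_\emptyset)$ or acts on it by a character of the torus, so $X(e_\emptyset)$ is a $B$-semi-invariant. By the uniqueness of the highest weight line in an irreducible representation, $X(e_\emptyset)$ is then a highest weight vector in $H$.

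Combining these ingredients gives $\mathrm{LG}(n-1,V)=\mathbb{P}\bigl(Sp(2n-2)\cdot X(e_\emptyset)\bigr)$, and the inclusion $X(\overline{E}_n)\subset \mathrm{LG}(n-1,V)$ from Theorem \ref{laggr} finishes the proof. The main obstacle is the third step: choosing a Borel compatible with the somewhat asymmetric generating set $\{u_i(t)|_V\}$ of Theorem \ref{res}, and then matching the Lam-group generators with the positive root data of $\mathfrak{sp}_{2n-2}$ so that $X(e_\emptyset)$ is manifestly highest weight. Once the matching is made, the computation itself reduces to inspection of the explicit matrices in \eqref{expl-rest}.
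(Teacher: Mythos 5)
Your argument is correct and follows essentially the same route as the paper, which itself only says ``putting it all together'' after Theorem \ref{mainth}: containment in $\mathrm{LG}(n-1,V)$ from Theorem \ref{laggr}, identification of $H=\ker Q$ with the last fundamental representation from Theorem \ref{mainth}, and the standard fact that the Pl\"ucker-embedded $\mathrm{LG}(n-1,V)$ is the closed orbit of the highest weight line. Your extra Borel-subgroup verification that $X(e_\emptyset)$ is a highest weight vector is harmless but can be shortcut: since $\langle w_2,\dots,w_{2n-2}\rangle$ is Lagrangian for $\Lambda_{2n-2}$, the point $X(e_\emptyset)$ already lies on the closed orbit, which is the orbit of the highest weight line.
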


\section{Non-negative Lagrangian Grassmannian}\label{Sec:nonneg}

The choice of a symplectic form on $\Bbb R^{2n}$ leads to a specific embedding $\mathrm{Sp}(2n)\rightarrow SL(2n)$ and hence to a specific embedding $\mathrm {LG}(n)\rightarrow \mathrm {Gr}(n,2n)$.
Lusztig introduced the notion of the positive part in a homogeneous spaces, in particular, in the type $A$ flag variety and the Grassmannian $\mathrm {Gr}(n,2n)$.
This was described explicitly in \cite{MR}. 
According to \cite{K} there is a choice of the symplectic form such that the Lusztig non-negative part of $\mathrm {LG}(n)$ set-theoretically obeys the identity
$$\mathrm{LG}_{\geq 0}(n)=\mathrm{Gr}_{\geq 0}(n,2n)\cap \mathrm{LG}(n).$$

In this section we will take the above formula as the definition of the non-negative part of $\mathrm{LG}(n)$ for our choice of the symplectic form, which is different from the choice made in \cite{K}, and show that the points of $\mathrm{LG}(n-1)$ defined by $\widetilde{\Omega}_n(e)$ from (\ref{expansion}) are non-negative. We will prove it using the technique of adding boundary spikes and bridges developed in \cite{L}. 

Recall that according to our convention the operators $x_i(t), y_i(t)$ and $u_i(t)$ act on the right.
\begin{theorem} \cite{LT} \label{b-b}
Let $N(\Gamma, \omega)$ be a bipartite network with $n$ boundary vertices. Assume that $N'(\Gamma', \omega')$ is obtained from $N(\Gamma, \omega)$ by adding a bridge with the weight $t$, with a white vertex at $i$ and a black vertex at $i+1$. Then the points $X(N'),$ $X(N)
$ of $\mathrm{Gr}(n-1,2n)$
associated with $N'(\Gamma', \omega')$ and $N(\Gamma, \omega)$ accordingly are related to each other as follows: 
$$X(N')=X(N) x_i(t)$$ 

If $N''$ is obtained from $N(\Gamma, \omega)$ by adding a bridge  with the weight $t$, a black vertex at $i$ and a white vertex at $i+1$. Then the points $X(N'')$ and $X(N)$ of $\mathrm{Gr}(n-1,2n)$ associated with $N''(\Gamma'', \omega'')$ and $N(\Gamma, \omega)$ are related to each other as follows:   $$X(N'')=X(N) y_i(t),$$
where the matrices $x_i(t)$ and $y_i(t)$ introduced in Definition \ref{def:xy}.  

Thus adding any bridge with a positive weight preserves non-negativity.
\end{theorem}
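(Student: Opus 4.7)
The plan is to compare the Plücker coordinates $\Delta_I^M(N)$ and $\Delta_I^M(N')$ directly through the combinatorics of almost perfect matchings, and to recognize the change as the effect of right multiplication by $x_i(t)$ on the row space. The first step is to spell out what a ``bridge'' does locally: adjoining a bridge with a white vertex at position $i$ and a black vertex at position $i+1$ introduces two new interior vertices (one white, one black), joined by a single edge of weight $t$, together with weight--$1$ connector edges that splice into the former structure adjacent to the boundary vertices $i$ and $i+1$. Everything else in $\Gamma$ is untouched, so $k(\Gamma')=k(\Gamma)$ and the index set of Plücker coordinates is the same in $N$ and $N'$.

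Next I would classify the almost perfect matchings $\Pi'$ of $N'$ according to which of the new edges at the bridge they use. By the degree--$1$ rule for boundary vertices and the requirement that every internal vertex be covered exactly once, there are only a handful of local configurations; in each case a matching $\Pi'$ restricts to a matching $\Pi$ of the original network $N$ and its weight is $t^{\varepsilon}\,\mathrm{wt}(\Pi)$ for $\varepsilon\in\{0,1\}$. Summing over $\Pi'\in\Pi'(I)$ yields a clean identity of the form
\begin{equation*}
\Delta_I^M(N') \;=\; \Delta_I^M(N) \;+\; t\cdot \Delta_{(I\setminus\{i+1\})\cup\{i\}}^M(N)
\end{equation*}
when $i+1\in I$ and $i\notin I$, while $\Delta_I^M(N')=\Delta_I^M(N)$ in all other cases (when $i+1\notin I$ the bridge cannot influence the count, and when both $i,i+1\in I$ the two terms coincide and cancel out of the sum). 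This is exactly the rule for Plücker coordinates under the column operation ``add $t$ times column $i$ to column $i+1$'', which is right multiplication by $x_i(t)$. Therefore $X(N')=X(N)\,x_i(t)$. The second statement, with the colors at $i$ and $i+1$ swapped, is completely parallel: the roles of $i$ and $i+1$ are exchanged, giving the lower--triangular matrix $y_i(t)$.

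For the non-negativity consequence, observe that for $t\geq 0$ the matrix $x_i(t)$ (respectively $y_i(t)$) is itself totally non-negative. Equivalently, looking at the identity derived above, each Plücker coordinate of $X(N')$ is a non-negative linear combination of Plücker coordinates of $X(N)$. Since $X(N)\in \mathrm{Gr}_{\geq 0}(n-1,2n)$ by Theorem~\ref{l2}, all Plücker coordinates of $X(N')$ are non-negative as well.

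The main obstacle is the careful bookkeeping in the combinatorial step: one must verify that the local count around the bridge genuinely produces the claimed linear combination, with the right multiplicity of $t$ and no spurious contributions from matchings that wrap around the bridge in unexpected ways. A secondary subtlety is choosing the orientation conventions so that the resulting column operation is really $x_i(t)$ and not some twisted cousin; this is pinned down by the convention in Definition~\ref{ld} for which boundary vertices are required to be incident to $\Pi$ in $\Pi(I)$, and matches the upper-triangular form of $x_i(t)$ fixed in Definition~\ref{def:xy}.
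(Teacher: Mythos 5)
This statement is one the paper does not prove at all: it is imported verbatim from \cite{LT}, so there is no internal argument to compare against. Judged on its own terms, your strategy is the standard one from the cited literature (Lam and Postnikov prove it via the boundary-measurement/flow formulation, which is equivalent to your matching formulation through Remark \ref{bij} and Theorem \ref{link}), and the linear-algebra half of your argument is exactly right: right multiplication by $x_i(t)$ adds $t$ times column $i$ to column $i+1$, so $\Delta_I$ is unchanged unless $i+1\in I$ and $i\notin I$, in which case it becomes $\Delta_I + t\,\Delta_{(I\setminus\{i+1\})\cup\{i\}}$; the non-negativity claim then follows either from this identity or from Cauchy--Binet together with the total non-negativity of $x_i(t)$ for $t\geq 0$.

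The one genuine gap is that the combinatorial half --- which is the actual content of the theorem --- is asserted rather than carried out. You state that every almost perfect matching of $N'$ restricts to one of $N$ with weight $t^{\varepsilon}\mathrm{wt}(\Pi)$ and that the sum over $\Pi'(I)$ reorganizes into the displayed identity, but this is precisely the step that requires the local case analysis at the bridge: one must track how the coverage of the two new interior vertices (the bridge edge used or not) interacts with the incidence conditions at the boundary vertices $i$ and $i+1$ in the definition of $\Pi(I)$, which for a bipartite network depend on the \emph{colors} of those boundary vertices (black vertices in $I$ must be covered, white ones must not be). That color dependence is also exactly what decides whether the extra term is attached to the index set $(I\setminus\{i+1\})\cup\{i\}$ or to $(I\setminus\{i\})\cup\{i+1\}$, i.e.\ whether the operation is $x_i(t)$ or $y_i(t)$; your sketch acknowledges this but does not pin it down. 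So the proposal is a correct and well-chosen outline whose central verification remains to be done.
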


\begin{remark} \label{s-change}
Let $e\in E_n$ and $e_{cl}\in E_n$ is obtained from $e$ by the clockwise shift by one of the indices of the boundary vertices then
$$X(N_{cl})=X(N)s_n$$
where the matrix $s_n$ was defined in \eqref{transpos}.
\end{remark}

\begin{definition} \cite{L}
For $e(\Gamma, \omega) \in E_n$, an arbitrary integer
$k\in \{1, 2,\dots , n\}$, and $t$ a non-negative real number define $u_{2k-1}(t)(e)$ to be the electrical network obtained from $e$ by adding a new edge with the weight $\frac{1}{t}$ connecting a new boundary vertex $v$ to the old boundary vertex labeled by $k$ which becomes an interior vertex. We call this operation \textit{adding a boundary spike}. 
\begin{figure}[h]
     \centering
     \includegraphics[scale=0.8]{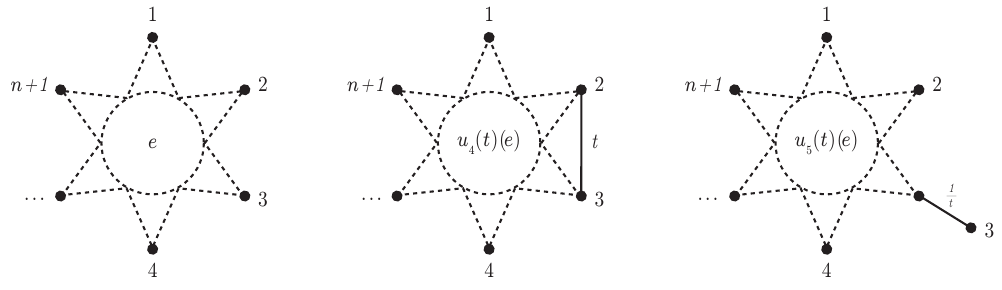}
     \caption{Adding a boundary bridge and a boundary spike}
     \end{figure}

For $e(\Gamma, \omega) \in E_n$, an arbitrary integer 
$k\in \{1, 2,\dots , n\}$, and $t$ a non-negative real number define $u_{2k}(t)(e)$ be the electrical network obtained from $e$ by adding a new edge from $k$ to $k + 1$ (indices taken modulo $n + 1$) with the weight $t$. We call this operation \textit{adding a boundary bridge}.
\end{definition}

\begin{figure}[h]
     \centering
     \includegraphics[scale=0.8]{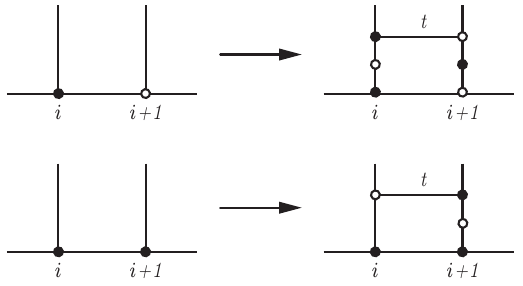}
     \caption{Adding a bridge}
     \end{figure}

\begin{theorem} \cite{L} \label{s-b}
Let $e(\Gamma, \omega) \in E_n$ be an electrical network  and the associated bipartite network $N_1(\Gamma, \omega)$. Likewise for the electrical networks $u_{2k-1}(t)(e)$ and $u_{2k}(t)(e) $ let   $N_2(u_{2k-1} \Gamma, u_{2k-1} \omega)$ and $N_2(u_{2k}  \Gamma, u_{2k} \omega)$ be the associated bipartite networks. Then, the points $X(N_1)$ and $X(N_2)$  of Grassmannian defined by  $N_1(\Gamma, \omega)$,  $N_2(u_{2k-1} \Gamma, u_{2k-1} \omega)$ and $N_2(u_{2k} \Gamma, u_{2k} \omega)$ are related to each other as follows:
$$X(N_2)=X(N_1)u_{2k-1}(t),  X(N_2)=X(N_1)u_{2k}(t),$$
where  $u_i(t)=x_i(t)y_{i+1}(t).$
\end{theorem}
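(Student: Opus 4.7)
The plan is to reduce both identities to Theorem \ref{b-b}, which describes the right-multiplication action on $X(N)$ of adding a single elementary bipartite bridge. Since $u_i(t) = x_i(t)y_{i+1}(t)$, it suffices to exhibit each elementary electrical operation as a composition of exactly two such elementary bipartite bridges.

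I would first fix the local picture of $N_1$ near the relevant boundary vertex, as dictated by Definition \ref{def-elb}: boundary vertex $2k-1$ is joined by a weight-one edge to a black interior vertex $b_k$ (which is in turn joined to the white midpoints $w_e$ of all edges $e$ incident to $\bar k$), while boundary vertex $2k$ is joined by a weight-one edge to the black face vertex $b_F$ of the face $F$ containing $\tilde k$. For the spike operation $u_{2k-1}(t)(e)$, the electrical network acquires a new boundary vertex joined by an edge of conductance $1/t$ to the vertex $\bar k$ (which becomes interior) and a new face is created. Applying Definition \ref{def-elb} to the enlarged electrical network and comparing with $N_1$ reveals, after contracting degree-two vertices, exactly the insertion of two bipartite bridges of weight $t$: one with a white vertex at position $2k-1$ and a black vertex at position $2k$, and a second with a black vertex at position $2k$ and a white vertex at position $2k+1$. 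By Theorem \ref{b-b}, this produces $X(N_2) = X(N_1)\, x_{2k-1}(t)\, y_{2k}(t) = X(N_1)\, u_{2k-1}(t)$.

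The bridge operation $u_{2k}(t)(e)$ is treated by the same local method applied now to the face separating $\bar k$ from $\overline{k+1}$: the new edge of weight $t$ introduces a single white midpoint $w_e$ and splits this face into two. Running Definition \ref{def-elb} through, and again contracting degree-two intermediate vertices, decomposes the change of bipartite networks into a bridge with white at $2k$ and black at $2k+1$, followed by one with black at $2k+1$ and white at $2k+2$, yielding $X(N_2) = X(N_1)\, x_{2k}(t)\, y_{2k+1}(t) = X(N_1)\, u_{2k}(t)$.

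The main subtlety, and what I expect to be the principal obstacle, is the weight bookkeeping. The spike carries conductance $1/t$ in the electrical picture, yet the two bipartite bridges it becomes must carry weight $t$; reconciling this requires either tracking the almost-perfect matchings directly through the modified network, or passing to a perfect orientation via Theorem \ref{link} so that the reciprocal weight transformation manifests the factor $t$ naturally. Verifying the identity on a minimal case (for example, the empty network and its images under a single spike or bridge move, cf.\ Example \ref{ex}) pins down the signs and the precise index conventions; since both the action of $u_i(t)$ and the local modification of $N(\Gamma,\omega)$ depend only on the three consecutive positions $2k-1, 2k, 2k+1$, the argument then extends to arbitrary $e$ without change.
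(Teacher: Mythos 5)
Your proposal is correct and follows essentially the same route the paper takes: the paper cites this theorem from \cite{L} and justifies it by exhibiting (via Postnikov's local transformations of bipartite networks, illustrated in Figures 5--9) that adding an electrical boundary bridge or spike turns the associated bipartite network into the old one with exactly two elementary bridges attached, so that Theorem \ref{b-b} yields right multiplication by $x_i(t)y_{i+1}(t)=u_i(t)$. The weight subtlety you flag (electrical conductance $\frac{1}{t}$ versus bipartite bridge weight $t$) is real and is visible in the paper's own example, where $u_3(\frac{1}{a})(e)$ produces two bridges of weight $\frac{1}{a}$.
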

We will illustrate this statement with a few particular examples to help the reader to see what is going on.

Consider the following $e\in E_n$ and the associated network $N(\Gamma, \omega).$ 

\begin{figure}[h]
     \centering
     \includegraphics[scale=0.7]{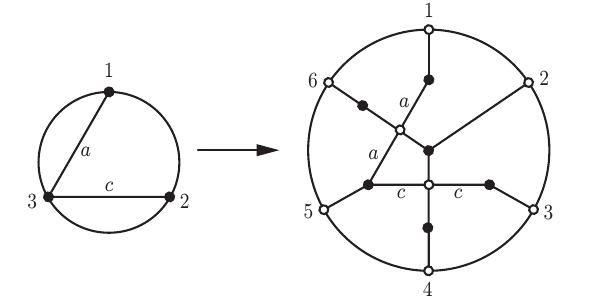}
     \caption{Electrical and its bipartite networks}
\end{figure}

Adding a boundary bridge between the vertices $1$ and $2$ we obtain the electrical network $u_2(b)(e)$ and associated bipartite network $N(u_2\Gamma, u_2\omega).$ 
\begin{figure}[h]
     \centering
     \includegraphics[scale=0.7]{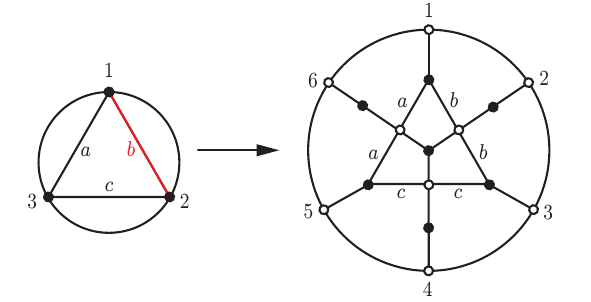}
     \caption{$u_2(b)(e)$ on the left, $N(u_2\Gamma, u_2\omega)$ on the right}
\end{figure}

Using Postnikov transformations \cite{LT} for bipartite networks we obtain that $N(u_2\Gamma, u_2\omega)$ is equivalent to the following bipartite network which can be obtained from the network $N(\Gamma,\omega)$ by adding two bridges with weights $b$. 
\begin{figure}[h]
     \centering
     \includegraphics[scale=0.7]{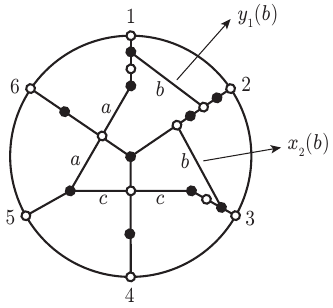}
     \caption{Network $N(\Gamma,\omega)$ with two added bridges}
\end{figure}
\newpage
Similarly for $e\in E_n$ on the left of Fig. \ref{fig:04} the network $u_3(\frac{1}{a})(e)$, on the left of Fig. \ref{fig:05}, is obtained by adding a spike to the vertex $3$ 

\begin{figure}[h]
     \centering
     \includegraphics[scale=0.9]{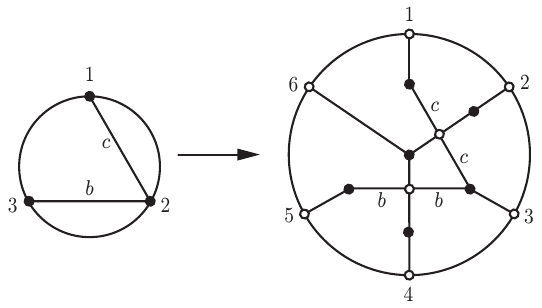}
     \caption{$e$ and associated bipartite network}
     \label{fig:04}
\end{figure}

\begin{figure}[h]
     \centering
     \includegraphics[scale=0.9]{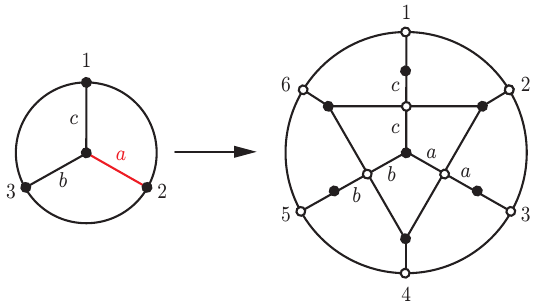}
     \caption{$u_3(\frac{1}{a})(e)$ and associated bipartite network $N(u_3\Gamma, u_3\omega)$}
     \label{fig:05}
\end{figure}

Again using the Postnikov transformations we obtain that the bipartite network  associated with $u_3(\frac{1}{a})(e)$ is equivalent to the following network which can be obtained from the network $N(\Gamma, \omega)$ by adding two bridges with weights $\frac{1}{a}$.
\begin{figure}[h]
     \centering
     \includegraphics[scale=0.9]{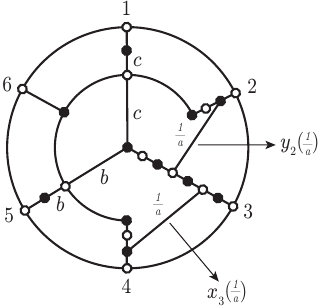}
     \caption{Equivalent to  $N(u_3\Gamma, u_3\omega)$ bipartite network}
     \label{fig:bipartite}
\end{figure}

It was proved in \cite{L} that the image of $\overline{E}_n$ lands in $\mathrm{Gr}_{\geq 0}(n-1,2n)$. Now we are ready to prove the following theorem:
\begin{theorem} \label{nonneglagr}
Let $e(\Gamma, \omega)\in E_n $ and $\widetilde{ \Omega}_n(e)$ is the point of $\mathrm{LG}(n-1)$ associated with it then $\widetilde{\Omega}_n(e)$  belongs to $\mathrm{LG}_{\geq 0}(n-1)$.
\end{theorem}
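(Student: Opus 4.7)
The plan is to proceed by induction on the spike/bridge parametrization of $E_n$ from \cite{L}. Since every $e \in E_n$ is obtained from the empty network $e_\emptyset$ by a finite sequence of operations $u_i(t)$ with $t \geq 0$ (the extension to $\overline{E}_n$ then follows by the continuity arguments of Section \ref{compactification}), it suffices to verify (i) that $\widetilde{\Omega}_n(e_\emptyset) \in \mathrm{LG}_{\geq 0}(n-1)$, and (ii) that each operation $u_i(t)$ preserves membership in $\mathrm{LG}_{\geq 0}(n-1)$. For (i), $M_R(e_\emptyset) = 0$ and inspection of $\Omega_n(e_\emptyset)$ shows its row space is spanned by the $V$-basis vectors $w_2, w_4, \ldots, w_{2n-2}$; thus $\widetilde{\Omega}_n(e_\emptyset)$ has a single non-vanishing Plücker coordinate $\widetilde{\Delta}_{\{2,4,\ldots,2n-2\}} = 1$, which is manifestly non-negative.

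For (ii), combining Theorem \ref{s-b} with the definition $\widetilde{\Omega}_n = \Omega_n B_n^{-1}$ and the relation \eqref{uib} yields the key transformation law
\[
\widetilde{\Omega}_n(u_i(t)(e)) = \widetilde{\Omega}_n(e) \cdot u_i(t)|_V.
\]
The formulas \eqref{expl-rest} then exhibit, for every $i \in \{2, \ldots, 2n-1\}$, the operator $u_i(t)|_V$ as a product of the elementary generators $x_j(t), y_j(t)$ of $SL(2n-2)$. For $t \geq 0$ these are Lusztig's totally non-negative generators whose right action is well known to preserve $\mathrm{Gr}_{\geq 0}(n-1, 2n-2)$; combined with Theorem \ref{laggr}, which guarantees the image remains in $\mathrm{LG}(n-1)$, this establishes the inductive step for all non-extremal $i$.

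The main obstacle is the two remaining cases $i = 1$ and $i = 2n$: by \eqref{expl-rest} these are conjugates of $u_3(t)|_V$ and $u_2(t)|_V$ respectively by $(s_{n-1}|_V)^2$, and $s_{n-1}|_V$ carries $(-1)^n$-type entries, so it is not a product of positive generators. I would address them by exploiting the cyclic symmetry of electrical networks: applying a boundary rotation $\rho$ via Remark \ref{s-change} intertwines $u_1(t)$ with $u_3(t)$ (and $u_{2n}(t)$ with $u_2(t)$), so the content of these cases reduces to showing that the cyclic-shift action $\widetilde{\Omega}_n \mapsto \widetilde{\Omega}_n \cdot (s_{n-1}|_V)$ permutes Plücker coordinates of $\widetilde{\Omega}_n(e)$ in a manner compatible with non-negativity. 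Verifying this requires a careful bookkeeping of the sign conventions inherited from the cyclic structure of the image of $E_n$; alternatively, one can sidestep the issue by computing $\widetilde{\Delta}_J(\widetilde{\Omega}_n(u_1(t)(e)))$ directly via Corollary \ref{contal} and expressing the result as a sum of grove measurements of the augmented network, which are non-negative by construction.
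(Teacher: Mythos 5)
Your induction has the right general shape --- it is essentially the ``vertex approach'' the authors themselves sketch in Section \ref{Sec:vertex} --- and your base case and the step for $i\in\{2,\dots,2n-1\}$ agree with what the paper does. But the obstacle you correctly flag at $i=1$ and $i=2n$ is a genuine gap, and neither of your proposed workarounds closes it. The cyclic-shift route is dangerously close to circular: the paper obtains the statement that $s_n|_V$ preserves non-negativity of the points $\widetilde{\Omega}_n(e)$ only as a \emph{corollary} of Theorem \ref{nonneglagr}, so you cannot use it in the proof unless you independently invoke Postnikov's cyclic symmetry of $\mathrm{Gr}_{\geq 0}(n-1,2n)$ and verify that the sign conventions of $s_{n-1}|_V$ (whose entries involve $(-1)^n$) match the twisted shift that actually preserves the non-negative Grassmannian; you acknowledge this bookkeeping but do not do it. The grove-measurement alternative is likewise only named, not executed. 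A secondary gap is your opening premise that every $e\in E_n$ is a \emph{finite} product of $u_i(t)$'s applied to $e_\emptyset$: the paper asserts this only for the standard well-connected graphs $\Sigma_n$ and reaches general networks via minors and limits, so at minimum you need the reduction to critical networks and a continuity argument here too.

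The paper's actual proof sidesteps the $u_1$, $u_{2n}$ problem entirely by running the induction in the opposite direction. It first reduces to connected \emph{critical} networks (equivalent networks give the same point), takes as base case the networks with at most three edges (checked by direct computation of the Pl\"ucker coordinates of $\widetilde{\Omega}_3(e)$ using Theorem \ref{respc}), and then inducts on the number of edges by \emph{peeling off} a boundary edge or boundary spike, writing $e=e'u_i(t)$ with $e'$ again critical. The key input from \cite{CIM1} is that a connected critical network has at least three boundary edges/spikes, so one may always choose one whose index lies in $\{2,\dots,2n-1\}$; the extremal generators $u_1(t)|_V$ and $u_{2n}(t)|_V$ are then never needed. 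This is the idea your proposal is missing, and it is what converts your sketch into a complete argument.
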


\begin{proof}
As it was demonstrated in Chapter $2.4 $ \cite{CIM1} each electrical network is equivalent to a critical electrical network \cite{Kl}. Moreover two equivalent electrical networks define the same point of  $\mathrm{Gr}_{\geq 0}(n-1, 2n)$ \cite{L} and as a consequence the same point of $\mathrm{LG}(n-1).$   Therefore it is sufficient to prove our statement only for the critical electrical  networks.

The proof is going by induction on the number of edges of $e \in E_n.$ Our argument is very similar to  Lam's argument for the  proof of the following statement from \cite{L}: for each point $X \in \mathrm{Gr}_{\geq 0}(k,n)$ there is a  bipartite network $N(\Gamma, \omega)$  which defines $X.$
Lam's original proof based on the fact that each point $X \in \mathrm{Gr}_{\geq 0}(k,n)$ can be "reduced".

Note that if $e$ has more than one connected component, the matrix  $\widetilde{\Omega}_n(e)$ has a block structure and it is sufficient to check non-negativity for each block, i.e. we can deal with each connected component of $e$ separately, therefore let us suppose that $e$ is connected.

The basis of induction is provided by the fact that each electrical network with three or less edges defines a positive point in $\mathrm{LG}(n-1).$ In fact we will prove a more general statement: each $e\in E_3$ defines  a point in $\mathrm{LG}_{\geq 0}(n-1).$ Consider an electrical network $e\in E_3$ and the associated point $\widetilde{\Omega}_3(e)\in $ which has the following form (see Example \ref{exlagr}): 
\begin{equation*}
\widetilde{\Omega}_3(e) = \left(
\begin{array}{ccccc}
-x_{21}& 1  & -x_{23}  & 0 & \\
x_{31}& 0  & -x_{31}-x_{32}  & 1 &  \\
\end{array}
\right).
\end{equation*}
The Plucker coordinates of $\widetilde{\Omega}_3(e)$ are equal to
 
  \begin{equation*}
\Delta_{12}(\widetilde{\Omega}_3(e))=-x_{31},\, \Delta_{13}(\widetilde{\Omega}_3(e))=x_{21}x_{31}+x_{21}x_{32}+x_{23}x_{31},\, \Delta_{14}(\widetilde{\Omega}_3(e))=-x_{21},
\end{equation*}
\begin{equation*}
\Delta_{23}(\widetilde{\Omega}_3(e))=-x_{31}-x_{32},\, \Delta_{24}(\widetilde{\Omega}_3(e))=1, \, \Delta_{34}(\widetilde{\Omega}_3(e))=-x_{23}.
\end{equation*}
Using the properties of a response matrix (see Theorem \ref{respc}) we obtain that each Plucker coordinate is non-negative.

Now we turn to the induction step. An edge of a critical electrical network $e\in E_n$ is called a boundary edge (respectively a boundary spike) if after its deletion  (respectively  contraction) we obtain an electrical network $e'$ such that $e=u_i(t)e,$ here $u_i(t)$ is the edge which we deleted (respectively  contracted).   Notice that $e'$ is again critical \cite{CIM1}.

Consider a connected critical electrical network $e\in E_n$ with more than three edges and more than three boundary vertices and the associated point $\widetilde{\Omega}_n(e).$ As it  was demonstrated  in \cite{CIM1} $e$ has at least three edges such that each of them is a boundary edge or a boundary spikes (called them boundary edges). Therefore there is a critical electrical network $e' \in E_n$ such that $e=e'u_i(t)$ here $i=1,\dots, 2n.$ Moreover, as we have at least three boundary edges we can choose one of them such that $i=2,\dots, 2n-1.$ 

By the induction assumption $\widetilde{\Omega}_n(e')$ defines a point in $\mathrm{LG}_{\geq 0}(n-1).$ Using the Theorem \ref{s-b} we conclude that 
$$\widetilde{\Omega}_n(e)=\widetilde{\Omega}_n(e')u_i|_V(t)$$
Now  notice that  each matrix $u_{i}(t)|_V$ $i=2, \dots, 2n-1$ preserves non-negativity as it follows from the Theorems \ref{b-b} and \eqref{expl-rest}. Hence we obtain that $\widetilde{\Omega}_n(e)$ defines a point in $\mathrm{LG}_{\geq 0}(n-1).$
\end{proof}

As a  corollary of the previous theorem we obtain the following:
\begin{corollary}
 The right action of matrices  $u_{i_j}(t_j)|_V$ $j=1, \dots, 2n$  and $s_n|_V$ preserve non-negativity of the  points $\widetilde{\Omega}_n(e).$
\end{corollary}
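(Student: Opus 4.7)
The plan is to reduce every case to Theorem \ref{nonneglagr} by interpreting right multiplication by each of these matrices as an operation on electrical networks, and then observe that compositions of non-negativity-preserving operations preserve non-negativity. The key bookkeeping identity comes from combining $\widetilde{\Omega}_n(e)=\Omega_n(e)B_n^{-1}$ with formula \eqref{uib}, giving
$$\widetilde{\Omega}_n(e)\cdot u_i(t)|_V=\Omega_n(e)\cdot u_i(t)\cdot B_n^{-1},$$
so whenever the right-hand side can be identified with $\widetilde{\Omega}_n(e')$ for some $e'\in\overline{E}_n$, non-negativity is immediate from Theorem \ref{nonneglagr}.

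First I would handle the generic indices $i\in\{2,\dots,2n-1\}$. By Theorem \ref{s-b} the right action of $u_i(t)$ on $\Omega_n(e)$ corresponds to attaching a boundary spike or a boundary bridge of weight $t\geq 0$ to $e$, producing an electrical network $u_i(t)(e)\in \overline{E}_n$. Hence $\widetilde{\Omega}_n(e)\cdot u_i(t)|_V=\widetilde{\Omega}_n(u_i(t)(e))$, and the resulting point is in $\mathrm{LG}_{\geq 0}(n-1)$ by the previous theorem.

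Next, for the shift, Remark \ref{s-change} identifies right multiplication by $s_n$ with the clockwise cyclic relabelling of boundary vertices, which maps $E_n$ to itself, so $\widetilde{\Omega}_n(e)\cdot s_n|_V$ is again covered by Theorem \ref{nonneglagr}. The inverse $(s_{n-1}|_V)^{-1}$ is likewise a (counter-clockwise) relabelling and hence is also non-negativity-preserving. The remaining boundary cases $i=1$ and $i=2n$ are then immediate from the conjugation identities
$$u_1(t)|_V=(s_{n-1}|_V)^2(u_3(t)|_V)(s_{n-1}|_V)^{-2},\qquad u_{2n}(t)|_V=(s_{n-1}|_V)^2(u_2(t)|_V)(s_{n-1}|_V)^{-2}$$
from \eqref{expl-rest}, since each of the four factors on the right preserves non-negativity by the steps above. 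Finally, since any composition of non-negativity-preserving operators is again non-negativity-preserving, the statement for arbitrary sequences $u_{i_j}(t_j)|_V$ follows.

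The only genuinely delicate point will be checking consistency of the identification $\widetilde{\Omega}_n(e)\cdot u_i(t)|_V=\widetilde{\Omega}_n(u_i(t)(e))$, i.e.\ verifying that the change of basis $B_n$ correctly intertwines the right action on $\mathrm{Gr}(n-1,2n)$ with the right action on $\mathrm{Gr}(n-1,V)$; this is straightforward but must be done with care because the $\Omega_n$- and $\widetilde{\Omega}_n$-pictures live in different ambient spaces. Once this is clear, the corollary is an immediate composition of facts that have already been established in the previous sections.
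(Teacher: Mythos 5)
Your proposal is correct and follows essentially the same route as the paper: identify $\widetilde{\Omega}_n(e)\,u_i(t)|_V$ (resp.\ $\widetilde{\Omega}_n(e)\,s_n|_V$) with $\widetilde{\Omega}_n(e')$ for the electrical network $e'$ obtained by adding a boundary spike or bridge (resp.\ by cyclically relabelling the boundary vertices), and then invoke Theorem \ref{nonneglagr}. The only difference is that your conjugation detour for $i=1$ and $i=2n$ is unnecessary, since Theorem \ref{s-b} already covers these indices directly (adding a spike at vertex $1$, resp.\ a bridge from $n$ to $1$), which is how the paper handles all $i=1,\dots,2n$ uniformly.
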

\begin{proof}
Consider two electrical networks $e' \in E_n$ and $e \in E_n$  such that $e'=eu_i(t)$ or $e'=es_n$ (see Remark \ref{s-change} for the geometrical meaning of the action of $s_n$). Using the Theorem \ref{s-b} we obtain that 
$$\widetilde{\Omega}_n(e')=\widetilde{\Omega}_n(e)u_i|_V(t)$$
or 
$$\widetilde{\Omega}_n(e')=\widetilde{\Omega}_n(e)s_n|_V(t).$$
Since we have already proved that $\widetilde{\Omega}_n(e')$ and $\widetilde{\Omega}_n(e)$ define points in $\mathrm{LG}_{\geq 0}(n-1)$  we conclude that matrices $u_{i}(t)|_V$ $i=1, \dots, 2n$ and $s_n|_V$ preserve non-negativity.   
\end{proof}

\begin{remark}
 Using the continuity arguments described in Section \ref{compactification} we conclude  that $X(e)\in \mathrm{LG}_{\geq 0}(n-1)$ for any $e\in \overline{E}_n$.
\end{remark}

\section{Applications of the new parametrisation}
\label{Sec:app}
We will give two applications of the parametrisation of the set $E_n$ as an orbit of the symplectic group action. Namely we will prove the well-known  formulas from \cite{CIM} for the change of the response matrix entries after adjoining a spike or an edge between boundary vertices and give a simple proof of Theorem 1.1 from \cite{KW} .
\subsection{Adjoining edges and the change of the response matrix}

\begin{theorem} \cite{CIM}, \cite{LP}
Let $e \in E_n $ be an electrical network. If $e' \in E_n $ is obtained from $e$ by adding an edge with the weight $t$  between the boundary vertices with the indices $k$ and $k+1$ then the following formula for the response matrix entries $x'_{ij}$ of $e'$ holds:
\begin{equation} \label{brf}
    x'_{ij}=x_{ij}+(\delta_{ik}-\delta_{(i+1)k})(\delta_{jk}-\delta_{(j+1)k})t,
\end{equation}

If $e' \in E_n $ is obtained from $e$ by adding a spike with the weight $\frac{1}{t}$  to a vertex with the index $k$ then the following formula for the entries $x'_{ij}$ of the response matrix of the network $e'$ holds:
\begin{equation} \label{sf}
    x'_{ij}=x_{ij}-\frac{tx_{ik}x_{kj}}{tx_{kk}+1},
\end{equation}
here $x_{ij}$ are the entries of the response matrix of $e$ and $\delta_{ij}$ are the Kronecker delta functions.
\end{theorem}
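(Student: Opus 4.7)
The plan is to derive both formulas from Theorem~\ref{s-b}, which describes the effect of adding a spike or bridge on the point $X(e)$ in the Grassmannian: $X(e') = X(e) \cdot u_i(t)$ for an appropriate index $i$. Since the representing matrix $\Omega_n(e)$ has the response-matrix entries $x_{ij}$ displayed explicitly (up to signs $(-1)^{i+j}$) in its odd-indexed columns, the transformation formulas can be read off by computing $\Omega_n(e) \cdot u_i(t)$ and then normalising the resulting matrix back to the standard form that defines $\Omega_n(e')$.

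For the bridge formula \eqref{brf}, I would use $u_{2k}(t) = x_{2k}(t)y_{2k-1}(t)$. Right multiplication by this product adds $t$ times the even column $2k$ to each of the neighbouring odd columns $2k-1$ and $2k+1$. The key observation is that column $2k$ of $\Omega_n(e)$ has only two nonzero entries, both equal to $1$, located in the rows corresponding to the two boundary vertices of the bridge. Hence only four entries of the matrix change, namely those in the two relevant rows of columns $2k-1$ and $2k+1$, and they encode precisely the four response-matrix entries associated with the endpoints of the bridge. The transformed matrix is already in standard form, so simply reading off the modified entries yields the rank-one additive update of \eqref{brf}.

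For the spike formula \eqref{sf}, I would use $u_{2k-1}(t) = x_{2k-1}(t)y_{2k-2}(t)$. Right multiplication now adds $t$ times the odd column $2k-1$ to each of the even columns $2k-2$ and $2k$. In contrast to the bridge case, column $2k-1$ of $\Omega_n(e)$ is dense: its entry in row $i$ is $(-1)^{i+k} x_{ik}$. Consequently the modified matrix is \emph{not} in standard form, since the even columns no longer exhibit the required $0/1$ pattern. To bring it back I would row-reduce using the pivot at position $(k,2k)$, whose new value is $1 + tx_{kk}$ (arising from the original $1$ plus the contribution $t x_{kk}$ in row $k$ of the $t\cdot\text{col}_{2k-1}$ update). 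Scaling row $k$ by $(1+tx_{kk})^{-1}$ and then clearing columns $2k-2$ and $2k$ amounts to subtracting $t(-1)^{i+k} x_{ik}/(1+tx_{kk})$ times the rescaled row $k$ from each row $i\ne k$. Projecting this operation onto the odd column corresponding to index $j$ produces the Schur-complement correction $-tx_{ik}x_{kj}/(1+tx_{kk})$ on the entry encoding $x_{ij}$, which is exactly \eqref{sf}.

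The main obstacle will be in the spike case: one must simultaneously restore the standard pattern in \emph{both} columns $2k-2$ and $2k$, verify that the same pivot at row $k$ suffices for both (this works because the modifications to both columns are identical, namely $t\cdot\text{col}_{2k-1}$), and keep track of the sign conventions $(-1)^{i+j}$ built into the standard form of $\Omega_n$. The boundary cases $k=1$ or $k=n$, where the wrap-around sign $(-1)^n$ appears in column $2n$, require separate checking but follow from the cyclic definition of $u_i(t)$ via the conjugation by $s_n$; alternatively, one can reduce these cases to the interior ones by applying the rotation $s_n$ before and after.
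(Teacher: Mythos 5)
Your proposal is correct and follows essentially the same route as the paper: both invoke Theorem~\ref{s-b} to write $X(e')=X(e)u_i(t)$, observe that for a bridge the product $\Omega_n(e)u_{2k}(t)$ is already in standard form (since column $2k$ has only two unit entries), and for a spike restore the standard form by the row reduction pivoting on the $(k,2k)$ entry $1+tx_{kk}$ — which is exactly the paper's left multiplication by the matrix $Cal_2(t)$. The only cosmetic difference is that the paper fixes $k$ to a specific small value and reduces the general case by cyclic relabelling, while you carry out the same computation for general $k$ directly.
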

\begin{proof}
It is sufficient to prove \eqref{brf} in the case of adjoining an edge between the vertices with the indices  $1$ and $2$ because other cases can be obtained from it by the shift of enumeration of vertices.

By Theorem \ref{s-b} matrices $\Omega(e) u_2(t)$ and $\Omega(e')$ represent the same point of $\mathrm{Gr}(n-1,2n)$.

By a straightforward computation we get:
\begin{equation*} 
\Omega(e) u_2(t) = \left(
\begin{array}{cccccccccc}
x_{11}+t& 1  & -x_{12}+t  & 0 & x_{13}   &\ldots   & (-1)^n& \\
-x_{21}+t& 1  & t+x_{22}  & 1 & -x_{23}  &\ldots&     0&\\
x_{31}& 0 & -x_{32}  & 1 & x_{33} &\ldots & 0&\\
\vdots & \vdots & \vdots & \vdots  & \ddots &   \vdots & \vdots &   \\
(-1)^n x_{n1}& 0 & (-1)^{n+1} x_{n2}  & 0 & (-1)^{n+2} x_{n3} &\ldots&    1& \\
\end{array}
\right),
\end{equation*}
The multiplication by $u_2(t)$  does not change the columns of the matrix $\Omega(e)$ with even indices and therefore we obtain:
$$\Omega(e) u_2(t)=\Omega(e')$$ which implies \eqref{brf}.
The same argument works for each $u_{2k}(t)$.

For the proof of \eqref{sf} it is sufficient to obtain the formula in the case of  adjoining a spike to the  vertex of index  $2$.
As before from Theorem \ref{s-b} we conclude that the matrices $\Omega(e')$ and $\Omega(e) u_3(t)$ represent the same point of $\mathrm{Gr}(n-1,2n)$ and we get
\begin{equation*} 
Cal_2(t)\Omega(e) u_3(t) = \left(
\begin{array}{cccccccccc}
x_{11}-\frac{x_{12}x_{21}}{x_{22}t+1}& 1  & -x_{12}+\frac{x_{12}x_{22}}{x_{22}t+1} & 0 & x_{13}-\frac{x_{12}x_{23}}{x_{22}t+1}   &\ldots   & (-1)^n& \\
-x_{21}+\frac{x_{22}x_{21}}{x_{22}t+1}& 1  & x_{22}-\frac{x_{22}x_{22}}{x_{22}t+1}  & 1 & -x_{23}+\frac{x_{22}x_{32}}{x_{22}t+1}  &\ldots&     0&\\
x_{31}-\frac{x_{32}x_{12}}{x_{22}t+1}& 0 & -x_{32}+\frac{x_{32}x_{22}}{x_{22}t+1}  & 1 & x_{33}-\frac{x_{32}x_{23}}{x_{22}t+1} &\ldots & 0&\\
\vdots & \vdots & \vdots & \vdots  & \ddots &   \vdots & \vdots &   \\
\end{array}
\right),
\end{equation*}
where 
\begin{equation*} 
Cal_2(t)= \left(
\begin{array}{cccccccccc}
1& \frac{x_{12}}{1+x_{22}t}  & 0 & 0 &  \ldots   & 0 \\
0& \frac{1}{1+x_{22}t}  & 0 &  0 & \ldots   & 0 \\
0& \frac{x_{32}}{1+x_{22}t}  & 1 & 0 &  \ldots   & 0 \\
0& \frac{x_{42}}{1+x_{22}t}  & 0 &  1 & \ldots   & 0 \\
\vdots & \vdots & \vdots & \vdots  & \ddots &   \vdots &    \\
\end{array}
\right).
\end{equation*}
Therefore
$$Cal_2(t)\Omega(e) u_3(t)=\Omega(e').$$ which implies \eqref{sf}.
\end{proof}
\subsection{Kenyon-Wilson theorem}
We will show now that Theorem $1.1$ from  \cite{KW} could be obtained as a simple consequence of the main Theorem \ref{maint}.
\begin{theorem} \cite{KW} \label{kw}
Let $e=(\Gamma, \omega) \in E_n$ and $\sigma\in \mathcal{NC}$ 
then each fraction 
$$\frac{L_{\sigma}}{L}$$ 
can be expressed as a homogeneous polynomial  of degree $n-k$ in variables $L_{i;j}:=\frac{L_{ij}}{L}$ with integer coefficients where $k$ is a number of connected components of $\sigma$.   
\end{theorem}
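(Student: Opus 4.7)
The plan is to express each ratio $L_\sigma/L$ as an appropriately normalised $(n-1)\times(n-1)$ minor of the matrix $\Omega_n(e)$, and to read off both the homogeneity and the integrality from the entrywise structure of that matrix.

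First, I would combine Theorem \ref{maint} with Corollary \ref{contal} to pin down the normalisation
\[
\Delta_I(\Omega_n(e)) \;=\; \pm\, \frac{\Delta_I^M}{L}
\]
for every $(n-1)$-subset $I\subset\{1,\dots,2n\}$. The proportionality constant is determined by evaluating both sides on the distinguished set $R$ of $n-1$ even indices supplied by Lemma \ref{lemmal}: on the one hand $\Delta_R^M = L$, while on the other the submatrix of $\Omega_n(e)$ on the columns $R$ has determinant $\pm 1$, since all its entries lie in $\{0,\pm 1\}$ and are arranged in a staircase pattern.

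The main structural observation is that the odd-indexed columns of $\Omega_n(e)$ have entries of the form $\pm x_{ij}$, whereas the even-indexed columns have entries in $\{0,\pm 1\}$. Expanding the minor $\Delta_I(\Omega_n(e))$ as a sum over permutations, every nonzero term carries exactly $m := |I\cap\mathrm{odd}|$ factors drawn from the $x_{ij}$ and $n-1-m$ factors from $\{\pm 1\}$. Hence $\Delta_I(\Omega_n(e))$ is a homogeneous polynomial of degree $m$ in $\{x_{ij}\}$ with integer coefficients. Using the identity $x_{ij}=-L_{i;j}$ from Theorem \ref{kenwil}, the same is true in the variables $L_{i;j}$. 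Combined with Theorem \ref{elcon} and the elementary count that every $\sigma$ concordant with $I$ has exactly $c(\sigma)=n-m$ parts (one per odd element not in $I$), this yields
\[
\sum_{\substack{\sigma\text{ concordant with }I\\ c(\sigma)=n-m}} \frac{L_\sigma}{L} \;=\; \pm\, \Delta_I(\Omega_n(e)),
\]
a homogeneous integer polynomial of degree $m$ in the $L_{i;j}$.

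To isolate an individual $L_\sigma/L$, the final ingredient is the combinatorial claim that for every non-crossing partition $\sigma$ with $k$ parts there exists an $(n-1)$-subset $I_\sigma$ such that $\sigma$ is the \emph{unique} non-crossing partition concordant with $I_\sigma$; granted this, $\Delta_{I_\sigma}^M = L_\sigma$ and the theorem follows with degree $n-k$. The cases $k=n$ and $k=n-1$ are immediate from Lemma \ref{lemmal}. For general $k$ the uniqueness of concordance is the main obstacle: it must be established by a careful explicit construction of $I_\sigma$ --- for instance by selecting one distinguished boundary marker inside each block of $\sigma$ and of $\widetilde{\sigma}$ so that no other non-crossing partition is compatible with the resulting holes --- or by invoking the relevant combinatorial lemma from Lam \cite{L}. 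All other steps are direct manipulations of the parametrisation developed in Sections \ref{sec:networks}-\ref{sec:Lagr}.
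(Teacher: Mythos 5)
Your overall strategy is the same as the paper's: identify $\Delta_I(\Omega'_n(e))$ with $\Delta_I^M/L$ via Theorem \ref{maint} and Corollary \ref{contal}, observe that the even-indexed columns of $\Omega_n(e)$ contain only $0,\pm1$ while the odd-indexed columns carry the $\pm x_{ij}$, and conclude that each such minor is a homogeneous integer polynomial of degree $|I\cap\mathrm{odd}|$ in the $x_{ij}=-L_{i;j}$. All of that matches the paper and is correct, including the count that every $\sigma$ concordant with $I$ has exactly $n-|I\cap\mathrm{odd}|$ parts.

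The gap is in your final step. You need, for every $\sigma\in\mathcal{NC}_n$, an $(n-1)$-set $I_\sigma$ for which $\sigma$ is the \emph{unique} concordant non-crossing partition, so that $\Delta_{I_\sigma}^M=L_\sigma$. This is a genuinely nontrivial combinatorial claim which you do not prove, and it cannot be obtained by ``invoking the relevant combinatorial lemma from Lam'': Lam's Proposition 5.19 (quoted in the paper as Proposition \ref{gro}) asserts only that $L_\sigma=\sum_{J(\sigma)}a_{J(\sigma)}\Delta^M_{J(\sigma)}$ with $a_{J(\sigma)}\in\{1,-1\}$ and $|J(\sigma)_{\mathrm{odd}}|=n-k$ for every $J(\sigma)$ occurring --- a signed combination of \emph{several} minors, not a single one. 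Uniqueness of concordance really does fail for most admissible choices of $I$: already for $n=4$, $\sigma=(13|2|4)$, the set $I=\{1,2,6\}$ is concordant with both $(13|2|4)$ and $(14|2|3)$, and for $n=5$, $\sigma=(135|2|4)$, most concordant $I$ admit two or three concordant partitions. A workable $I_\sigma$ may well exist in each case (it does in these examples), but exhibiting one for arbitrary $\sigma$ requires a careful construction that your sketch only gestures at, and this is precisely the hard content your proof is missing. The repair is cheap: since your degree and integrality analysis applies to each minor $\Delta_{J(\sigma)}(\Omega'_n(e))$ separately, and Proposition \ref{gro} guarantees all the $J(\sigma)$ in the sum have the same number $n-k$ of odd indices, you can simply substitute the signed sum for your single minor and the rest of your argument goes through verbatim --- which is exactly what the paper does.
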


For simplicity we consider only the case when $\sigma\in \mathcal{NC}$, but the statement is true even for an arbitrary $\sigma$ \cite{KW}.
The following is essentially the statement of Proposition $5.19$ from \cite{L}:
\begin{proposition} \label{gro}
Let $e=(\Gamma, \omega) \in E_n$ be an electrical network. We associated to it a bipartite network $N(\Gamma, \omega)$. Then for each  $\sigma\in \mathcal{NC}$ there is a set of coordinates $\Delta^d_{J(\sigma)}$ such that the following identity holds: 
$$L_{\sigma}=\sum \limits_{J(\sigma)} a_{J(\sigma)}\Delta^d_{J(\sigma)},$$
with $a_{J(\sigma)}=\{1, -1\}$.
Moreover for each $J(\sigma)$ the following holds: $|J(\sigma)_{odd}|=n-k$ where $|J(\sigma)_{odd}|$ is the number of odd indices in $J(\sigma).$ 
\end{proposition}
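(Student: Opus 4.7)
The plan is to invert the linear relation
\[
\Delta^M_I = \sum_{\sigma\ \text{concordant with}\ I} L_\sigma
\]
of Theorem~\ref{elconcom}, extracting each $L_\sigma$ as a $\pm 1$-combination of Plücker coordinates indexed by $(n-1)$-subsets of $\{1,\dots,2n\}$.

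First I would record the parity bookkeeping. If $\sigma\in\mathcal{NC}_n$ has $k$ parts, then $\tilde\sigma$ has $n+1-k$ parts, so any $I$ concordant with $\sigma$ omits one odd index per part of $\sigma$ and one even index per part of $\tilde\sigma$, giving $|I_{odd}|=n-k$ and $|I_{even}|=k-1$. Crucially, every $\sigma'$ concordant with the same $I$ must also have $k$ parts, so the linear system splits into blocks indexed by $k$ and any expression for $L_\sigma$ obtained inside its block automatically satisfies the asserted condition $|J(\sigma)_{odd}|=n-k$.

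Next, for each $\sigma$ with $k$ parts I would construct a canonical concordance $J(\sigma)$ by removing the clockwise-first element of each part of $\sigma$ and of each part of $\tilde\sigma$, and fix a total order $\prec$ on non-crossing partitions with $k$ parts (for instance, a refinement of the lex order on the sorted sequence of part-maxima) with the property that whenever $\sigma'\neq\sigma$ is concordant with $J(\sigma)$ one has $\sigma\prec\sigma'$. Granted such a $\prec$, the identity
\[
\Delta^M_{J(\sigma)} = L_\sigma + \sum_{\sigma'\succ\sigma,\ \sigma'\ \text{conc. with}\ J(\sigma)} L_{\sigma'}
\]
is triangular. Inducting on $\prec$ from the top down, the base case is the $\prec$-maximum $\sigma$, for which $J(\sigma)$ is concordant only with $\sigma$ itself and one gets $L_\sigma=\Delta^M_{J(\sigma)}$ directly; in the inductive step each $L_{\sigma'}$ on the right has already been expressed as a $\pm 1$-combination of $\Delta^M_{J'}$'s with $|J'_{odd}|=n-k$, and subtracting yields the same property for $L_\sigma$. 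That the base case of Lemma~\ref{lemmal} (the finest partition $\bar1|\bar2|\dots|\bar n$, for which $L=\Delta^M_R$ with $R$ any set of even indices) fits into this framework is a consistency check.

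The main obstacle is the combinatorial claim that establishes the triangularity. Since $\sigma$ and any concordant $\sigma'$ share the $k$ clockwise-first elements of parts of $\sigma$ as representatives of their own parts, any $\sigma'$ arises from $\sigma$ by reassigning the non-representative elements subject to the non-crossing constraint and to the mirror condition coming from the dual $\tilde\sigma'$. A case analysis—already visible in the $n=4$, $k=3$ example where, e.g., $\sigma=(\bar1\bar2|\bar3|\bar4)$ admits several concordant partitions but each of the latter has its own $J$ that is concordant with strictly fewer partitions—must show that any such reassignment strictly increases the chosen order statistic. Non-crossing-ness is what makes the allowed local moves monotone and what ultimately forces the coefficients to be $\pm 1$ rather than arbitrary integers; once this monotonicity is established, the rest of the argument is a routine triangular inversion.
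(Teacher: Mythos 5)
The paper gives no proof of this proposition at all: it is quoted as ``essentially Proposition 5.19 of \cite{L}'', so your argument has to stand entirely on its own. The part of it that does work is the parity bookkeeping: concordance forces $|I_{odd}|=n-k'$ for every $\sigma'$ concordant with $I$, where $k'$ is the number of parts of $\sigma'$, so the linear system of Theorem \ref{elconcom} is block-diagonal in $k$ and the clause $|J(\sigma)_{odd}|=n-k$ is automatic once $L_\sigma$ is expressed inside its own block. That observation is correct and is the right way to obtain the ``moreover'' statement.

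The core of the proposition, however, is left unproved at two distinct places. First, the unitriangularity of the concordance matrix with respect to your order $\prec$ is posited, not established: you assume a total order exists for which every $\sigma'\neq\sigma$ concordant with $J(\sigma)$ satisfies $\sigma\prec\sigma'$, and you defer the required monotonicity to an unspecified ``case analysis.'' Second, and more seriously, even granting unitriangularity the conclusion does not follow. The inverse of a unitriangular $0$--$1$ matrix need not have entries in $\{0,\pm1\}$: for $\left(\begin{smallmatrix}1&1&1&0\\0&1&0&1\\0&0&1&1\\0&0&0&1\end{smallmatrix}\right)$ the $(1,4)$ entry of the inverse equals $2$. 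Your inductive step silently assumes that when the already-obtained expansions of the $L_{\sigma'}$ are substituted into $L_\sigma=\Delta^M_{J(\sigma)}-\sum_{\sigma'}L_{\sigma'}$, the index sets $J'$ occurring for distinct $\sigma'$ either never collide or collide with cancellations that keep every coefficient in $\{0,\pm1\}$. That is precisely the nontrivial content of the proposition (you yourself flag it as what non-crossingness must ``ultimately force''), and no argument is offered for it. As written, the proposal shows only that $L_\sigma$ is an \emph{integer} combination of Pl\"ucker coordinates with the stated parity, which is strictly weaker than the claim.
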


Now we are ready to prove Theorem \ref{kw}. Let  $e=(\Gamma, \omega) \in E_n$ be an electrical network and the associated bipartite network $N(\Gamma, \omega)$. Introduce a perfect orientation $O$ on $N(\Gamma, \omega)$ to obtain the Postnikov network $NP(\Gamma, \omega', O)$. For $\sigma\in\mathcal{NC}$ and the appropriate coordinate $L_{\sigma}$ according to Theorems  \ref{contal}  and \ref{gro} the following holds:    
$$\frac{L_{\sigma}}{L}=\sum \limits_{J(\sigma)} \frac{a_{J(\sigma)}\Delta^d_{J(\sigma)}}{L}=\sum \limits_{J(\sigma)}a_{J(\sigma)}\Delta_{J(\sigma)}(A),$$
where $A$ is the extended matrix of boundary measurements for $NP(\Gamma, \omega', O).$

On the other hand for each $\Delta_{J(\sigma)}$ using Theorem \ref{maint}  we get:  $$\frac{\Delta_{J(\sigma)}(A)}{\Delta_{K}(A)}=\frac{\Delta_{J(\sigma)}(\Omega'_n(e))}{\Delta_{K}(\Omega'_n(e))},$$
where $K=\{2,4,6, \dots, 2n-2\}$. Using that $\Delta_K(A)=\Delta_K(\Omega'_n)=1$ we obtain the following identities: 
$$\frac{L_{\sigma}}{L}=\sum \limits_{J(\sigma)}a_{J(\sigma)}\Delta_{J(\sigma)}(A)=\sum \limits_{J(\sigma)}a_{J(\sigma)}\frac{\Delta_{K}(A)\Delta_{J(\sigma)}(\Omega'_n(e))}{\Delta_{K}(\Omega'_n(e))}=\sum \limits_{J(\sigma)}a_{J(\sigma)}\Delta_{J(\sigma)}(\Omega'_n(e)).$$
Notice that the last expression is a homogeneous polynomial in the variables $x_{ij}$ with integer coefficients and therefore according to Theorem \ref{kenwil} is a polynomial in $L_{i;j}$, since $L_{i;j}=-x_{ij}.$  It is easy to see that the degree of each summand $\Delta_{J(\sigma)}(\Omega'_n(e))$ is equal to $|J(\sigma)_{odd}|$. The last number is equal to the number of columns with odd indices in the matrix $\Omega'_n(e)$. Recall that the columns with even indices contain only $0$ and $1$.  Now  using Theorem \ref{gro} we conclude that  $|J(\sigma)_{odd}|=n-k.$ 
\begin{example}
Consider an electrical network $e \in E_n$ and the associated bipartite network $N(\Gamma, \omega).$ Using Theorem \ref{elcon} it is easy to check that for $N(\Gamma, \omega)$ the following holds:  
$$\Delta_{567}^M= L_{14|23}.$$
Therefore by the proof of Theorem \ref{kw} we obtain that 
$$\frac{L_{14|23}}{L}=\Delta_{567}(\Omega'_4(e))=x_{14}x_{23}-x_{24}x_{13}=L_{14}L_{23}-L_{24}L_{13}.$$
\end{example}

\section{Vertex representation and standard well-connected graphs }
\label{Sec:vertex}

\subsection{Vertex approach} The authors of \cite{GT} defined for $e\in E_n$ the boundary partition function $M_B(e)$ or simply $M_B$ if it is clear which $e$ we work with. It is a matrix which depends on at most $n(n-1)/2$ parameters. These parameters are the conductivities of the edges of the network.  Such a matrix according to \cite{GT} belongs to the symplectic group $\mathrm{Sp}(n)$ where $n$ can be odd or even.


\bea
T_{2n}=\left(
\begin{array}{ccccccc}
1 & 0 & 0 &\cdots & 0 & 0 &\cdots\\
0 & 0 & 0 & \cdots & 1 & 0 & \cdots \\
0 & 1 & 0 & \cdots & 0 & 0 & \cdots \\
0 & 0 & 0 & \cdots & 0 & 1 & \cdots \\
0 & 0 & 1 & \cdots & 0 & 0 & \cdots \\
\vdots &  \vdots & \vdots & \ddots & \vdots & \vdots & \vdots\\
0 & 0 & 0& \cdots & 0 & 0 & \ddots
\end{array}
\right);\qquad
S_n=\left(
\begin{array}{ccccc}
1 & 0 &0 &  \cdots & -1\\
-1 & 1 & 0 &  \cdots & 0 \\
0 &-1 & 1 &  \cdots & 0 \\
\vdots &  \vdots & \vdots & \ddots & \vdots\\
0 & 0 & 0 & \cdots & 1
\end{array}
\right).\nn
\eea
One of the main results from \cite{GT} is
\begin{lemma} \label{lemma:w1w2}
For an electrical network $e \in E_n$ on the standard well-connected graph $\Sigma_n$ the row spaces of the matrices
\bea
W_1=(S_n,M_R)\quad \mbox{and} \quad
W_2=(M_B,Id_n)S_{2n} T_{2n}\nn
\eea
define the same point in $\mathrm{Gr}(n-1,2n)$.
\end{lemma}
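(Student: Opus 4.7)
The plan is to show that $W_1$ and $W_2$ span the same $(n-1)$-dimensional subspace by reducing both to the Lam embedding point $X(e)$ established in Theorem \ref{maint}. My starting observation is that row $k$ of $W_1=(S_n,M_R)$ is exactly $(0,\ldots,0,-1,1,0,\ldots,0,x_{k1},\ldots,x_{kn})$, which is precisely row $k$ of the auxiliary matrix $\Omega_n^{aux}(e)$ from Lemma \ref{vertex model}. Consequently, by Lemma \ref{vertex model 2}, one has $W_1 = D_n\,\Omega_n(e)\,\overline{T}_{2n}$, so the row space of $W_1$ is the row space of $\Omega_n(e)$ after the fixed signed permutation of columns implemented by $\overline{T}_{2n}$. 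By Theorem \ref{maint} this is the image $X(e)$ of $e$ in $\mathrm{Gr}(n-1,2n)$, up to that fixed column reshuffling.

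For $W_2$, I would first compute $S_{2n}T_{2n}$ explicitly as an operation acting on columns. The factor $T_{2n}$ interleaves the two length-$n$ blocks of $(M_B,Id_n)$, while $S_{2n}$ applies the discrete-difference sign pattern on the interleaved columns. A direct calculation should reveal that this composite column operation is the analogue, for the block $(M_B,Id_n)$, of the signed permutation $\overline{T}_{2n}$ which converts $\Omega_n(e)$ into $\Omega_n^{aux}(e)$. After this reshuffling, $W_2$ should take the form $(A\mid B)$ in which the even-indexed columns reproduce the alternating $\pm 1$ skeleton of $\Omega_n(e)$ and the odd-indexed columns carry the conductivity data from $M_B$.

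The substantive content is then the identification between the boundary partition matrix $M_B$ on the standard graph $\Sigma_n$ and the response matrix $M_R$. From the vertex-model construction of \cite{GT}, $M_B$ records the scattering data between boundary voltages and boundary currents on $\Sigma_n$; by Kirchhoff's laws, this encodes exactly the same linear relation between boundary voltages and currents as $M_R$ does. Making this dictionary explicit should yield an identity $W_1 = C\cdot W_2$ with $C$ an invertible matrix of signs and permutations, and hence the two matrices represent the same point of $\mathrm{Gr}(n-1,2n)$.

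The principal obstacle is extracting the precise $M_B\leftrightarrow M_R$ dictionary on $\Sigma_n$ from the vertex-model description of \cite{GT}; once this is in place, the compatibility with $S_{2n}T_{2n}$ is bookkeeping. As a preliminary sanity check, both $W_1$ and $W_2$ have rank $n-1$: for $W_1$ this follows from the fact that $(1,\ldots,1)$ annihilates both $S_n$ and $M_R$ on the left (Theorem \ref{respc}), while for $W_2$ it follows from the symplectic constraints on $M_B$ established in \cite{GT}. This rank equality reduces the problem to showing a one-sided inclusion of row spans, which the dictionary above then delivers.
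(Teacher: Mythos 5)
First, a remark on the comparison itself: the paper does not prove this lemma --- it is quoted as ``one of the main results from \cite{GT}'' and used as an imported black box --- so there is no in-paper argument to measure your proposal against. Judged on its own terms, your proposal contains one correct and genuinely useful observation, but has a gap exactly where the content of the lemma lies. The correct part is the identification of $W_1$: the rows of $(S_n,M_R)$ literally coincide with the rows of $\Omega_n^{aux}(e)$ from Lemma \ref{vertex model}, so via Lemma \ref{vertex model 2} and Theorem \ref{maint} the row space of $W_1$ is the Lam point $X(e)$ transported by the fixed signed permutation $\overline{T}_{2n}$. This is precisely the fact the paper itself invokes right after the lemma, in the proof of Corollary \ref{boundary} (``the subspace $\Omega^{aux}_e$ and $W_1$ coincide'').

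The gap is that the entire substance of the lemma is the relation between $M_B$ and $M_R$ on $\Sigma_n$, and you defer exactly this to an unconstructed ``dictionary.'' The matrix $M_B$ is not defined by Kirchhoff's laws; it is the boundary partition function of the vertex model of \cite{GT}, a transfer-matrix object in $Sp(n)$ assembled from local weights on the standard graph. The assertion that $(M_B,\Id_n)S_{2n}T_{2n}$ and $(S_n,M_R)$ span the same $(n-1)$-plane \emph{is} the nontrivial statement that this vertex-model scattering data and the response-matrix data agree after the explicit change of coordinates $S_{2n}T_{2n}$; writing ``by Kirchhoff's laws this encodes exactly the same linear relation'' asserts the conclusion rather than deriving it. To close the gap one must either compute $M_B$ on $\Sigma_n$ explicitly, or run an induction over the construction of $\Sigma_n$ by adding boundary spikes and bridges, tracking how both $M_B$ and $M_R$ (equivalently $\Omega_n(e)$, via Theorem \ref{s-b}) transform, and exhibit the invertible matrix $C$ with $W_1=C\,W_2$. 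A secondary issue: your rank check for $W_2$ does not follow from $M_B\in Sp(n)$ alone. The matrix $(M_B,\Id_n)$ has full rank $n$, and the drop to $n-1$ after right multiplication by $S_{2n}$ occurs only if the all-ones row vector (which spans the left kernel of $S_{2n}$) lies in the row space of $(M_B,\Id_n)$, i.e.\ only if $(1,\dots,1)M_B=(1,\dots,1)$; that is an additional property of $M_B$ that must be extracted from \cite{GT}, not a consequence of symplecticity.
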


We define standard well-connected graphs by induction, the first few examples are presented on Figure \ref{stgr}. 

\begin{figure}[h]
\centering
\includegraphics[width=80mm]{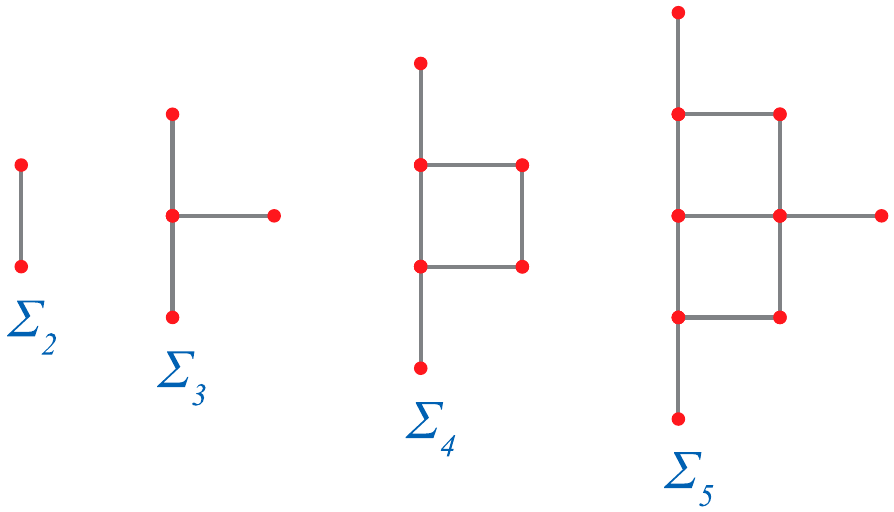}
\caption{Standard well-connected graphs}
\label{stgr}
\end{figure}

Lemmas \ref{lemma:w1w2}, \ref{vertex model} and \ref{vertex model 2} imply immediately:

\begin{corollary}\label{boundary}
The subspace $W_2$ defines a point in $\mathrm{LG}(n-1,V')$ for any value of the parameters.

Moreover the presentation of  electrical networks on standard well-connected graphs given in  \cite{GT} and  the presentation given in \cite{L}  restricted on electrical networks on standard well-connected graphs  inside of the projective space $\mathbb P(\bigwedge^{n-1}\mathbb R^{2n})$ differ by the automorphism of the ambient space $\mathbb R^{2n}$ given by the matrix $\overline{T}_{2n}$ from Lemma \ref{vertex model 2}.
\end{corollary}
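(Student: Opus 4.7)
The plan is to deduce both assertions directly from results already established earlier in the paper. The central observation I want to exploit is that the matrix $W_1=(S_n,M_R)$ from Lemma \ref{lemma:w1w2} coincides row-by-row with the auxiliary matrix $\Omega^{aux}(e)$ introduced in the proof of Theorem \ref{laggr}; once this identification is in place, everything follows by chaining known statements.

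For the first assertion, I would begin by checking directly that $W_1=\Omega^{aux}(e)$: the leading $n$ columns of $\Omega^{aux}(e)$ form precisely $S_n$ (with the wraparound $-1$ in the top-right explaining the first row of $\Omega^{aux}(e)$), while the trailing $n$ columns are exactly $M_R(e)$. Lemma \ref{vertex model} then tells us that the row space of $W_1$ is Lagrangian in $V'$, and Lemma \ref{lemma:w1w2} tells us that $W_1$ and $W_2$ span the same point of $\mathrm{Gr}(n-1,2n)$. Since both being contained in $V'$ and being isotropic with respect to $\lambda_{2n}|_{V'}$ are properties of the subspace itself and not of any particular spanning matrix, they transfer immediately from $W_1$ to $W_2$, yielding $W_2\in \mathrm{LG}(n-1,V')$ for every value of the parameters.

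For the second assertion, I would invoke Lemma \ref{vertex model 2}, which provides the explicit identity
\[
\Omega^{aux}(e)=D_n\,\Omega_n(e)\,\overline{T}_{2n}.
\]
Left multiplication by the invertible diagonal matrix $D_n$ does not affect the row space, so the row space of $\Omega^{aux}(e)$ is obtained from the row space of $\Omega_n(e)$ by applying the linear automorphism $v\mapsto v\,\overline{T}_{2n}$ of $\mathbb{R}^{2n}$. Combined with the first part, this means that the row space of $W_2$ is the image under $\overline{T}_{2n}$ of the row space of $\Omega_n(e)$. Taking the $(n-1)$-st exterior power and then projectivising turns this into the claimed identification of the two embeddings inside $\mathbb{P}(\bigwedge^{n-1}\mathbb{R}^{2n})$.

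I do not anticipate serious obstacles here: the whole argument is a bookkeeping exercise that assembles Lemmas \ref{lemma:w1w2}, \ref{vertex model}, and \ref{vertex model 2}. The only step that requires genuine care is the opening identification $W_1=\Omega^{aux}(e)$, where the shape of $S_n$ (including its wraparound entry) has to be matched carefully against the leading $n$ columns of $\Omega^{aux}(e)$; once that matching is done, the remainder is essentially formal.
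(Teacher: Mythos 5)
Your proposal is correct and follows essentially the same route as the paper: the paper's (very terse) proof likewise rests on the identification $W_1=\Omega^{aux}(e)$ combined with Lemmas \ref{lemma:w1w2} and \ref{vertex model} for the first claim, and on the formula $\Omega^{aux}_n(e)=D_n\Omega_n(e)\overline{T}_{2n}$ of Lemma \ref{vertex model 2} for the second. Your write-up simply makes explicit the bookkeeping (column matching for $S_n$ and $M_R$, invariance of the row space under left multiplication by $D_n$) that the paper leaves to the reader.
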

\begin{proof}
The first statement follows from the fact that the subspace $\Omega^{aux}_e$ and $W_1$ coincide.

The second statement is a direct consequence of the formula given in Lemma
\ref{vertex model 2}
\end{proof}

\subsection{Standard well-connected graph technique} 
In this subsection we sketch another strategy for proving Theorem \ref{laggr} and Theorem \ref{nonneglagr}.

First of all let us prove Theorem \ref{laggr} and Theorem \ref{nonneglagr}   only for electrical networks $e(\Sigma_n, \omega)$ on the  standard well-connected graphs (see Figure \ref{stgr}). Consider the electrical network  $e(\Sigma_n, \omega).$ Notice that it can be obtained from the empty network $e_{\emptyset} \in E_n$ by the sequence $\{u_{i_j}(t_j)\}$ $j=2, \dots, 2n-1$ of adding spikes and bridges (for example, the electrical network $e(\Sigma_4, \omega)$ can be obtained by these  operations as it is shown in  Figure \ref{fig:adding}, each electrical networks $e(\Sigma_n, \omega)$ can be obtained by the same way). Using Theorem \ref{s-b} we conclude that 

$$\widetilde{\Omega}_n(e)=\widetilde{\Omega}_n(e_{\emptyset})\prod_{j=1}^m(u_{i_j}(t_j)|_V),$$
here $\widetilde{\Omega}_n(e_{\emptyset})$ is the point of $\mathrm{LG}(n-1)$ associated with the empty network $e_{\emptyset}.$ It is easy to see that the point $\widetilde{\Omega}_n(e_{\emptyset})$ belongs to the non-negative part of $\mathrm{LG}(n-1,V').$ Since each $u_{i_j}(t_j)|_V$ $j=2, \dots, 2n-1$  is symplectic (see Theorem \ref{res}) we obtain that $\widetilde{\Omega}_n(e)$ belongs to $\mathrm{LG}(n-1,V').$   Also as we already obtained (see the proof of  Theorem \ref{nonneglagr})  each  $u_{i_j}(t_j)|_V$ $j=2, \dots, 2n-1$  preserves non-negativity. Therefore we conclude that
$$\widetilde{\Omega}_n(e)=\widetilde{\Omega}_n(e_{\emptyset})\prod_{j=1}^m(u_{i_j}(t_j)|_V)$$
is the non-negative point of $\mathrm{LG}(n-1,V').$ 

\begin{figure}[h]
     \centering
     \includegraphics[scale=0.4]{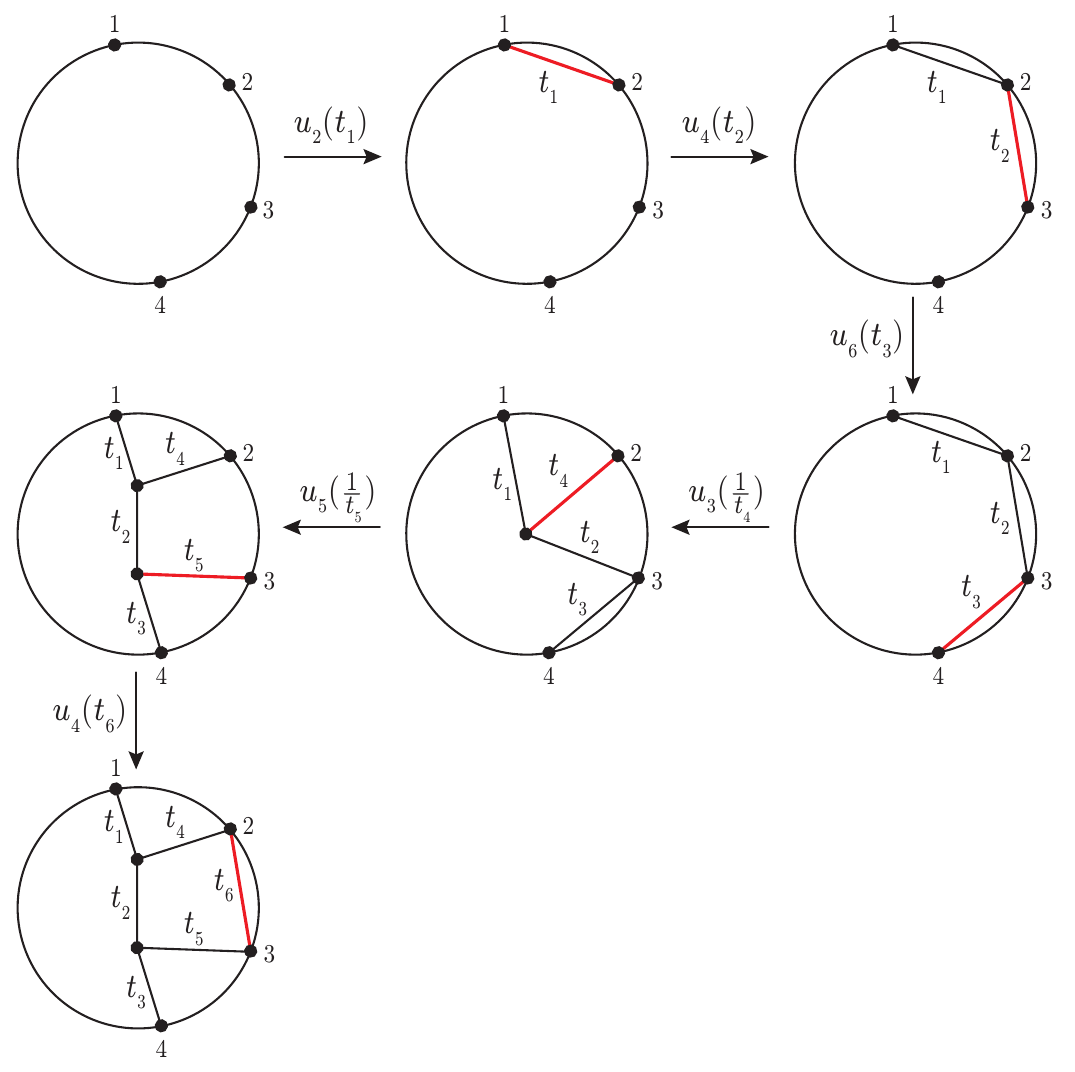}
     \caption{Adding boundary spikes and  boundary bridges for $\Sigma_4$}
     \label{fig:adding}
\end{figure}

Now it remains to extend the above statement for all networks. It can be done  by using the fact from \cite{Kl} that any network is electrical equivalent to a network on a minor of the standard well-connected graph (see Figure \ref{stgr}), where minor is a subgraph of a graph after deletion or contraction some set of edges. As we explained in Section \ref{compactification} edge deletion and contraction corresponds to passing to the limit when some of the edge conductivities are approaching zero or infinity, therefore using the continuity argument we conclude that all networks define points in $\mathrm{LG}(n-1,V')$ (we get Theorem \ref{laggr}). Moreover all these points are  non-negative (we get Theorem \ref{nonneglagr}).

So in fact it is sufficient to prove some theorems only for networks on standard well-connected  graphs. Moreover this idea might be useful for working with other classes of dimer models, in particular for studying the Ising model. We are planning to develop this approach in a future publication.

\section*{Acknowledgements} 
We are grateful to Arkady Berenstein and Alexander Kuznetsov for useful discussions and to Lazar Guterman for pointing out few misprints. Research of B.B, A.K. and D.T. (Sections 2, 3 and 4) has been supported in part by the RSF Grant No. 20-71-10110 which funds the work of B.B, A.K. and D.T. at P. G. Demidov Yaroslavl State University. Research of B.B. (Sections 7 and 8) was supported in part by the ISF grant 876/20.
Part of the work in this project was completed while the second author visited the ICERM, USA and the MPI, Bonn, Germany.

\end{document}